\documentclass{amsart}
\usepackage{amsmath, amssymb}  
\usepackage{bbm}
\usepackage{enumerate}
\usepackage[all]{xy}

\usepackage{hyperref}
\usepackage{fullpage}

\setlength{\parindent}{0pt} 
\setlength{\parskip}{1ex plus 0.5ex minus 0.2ex}
\numberwithin{equation}{section}

\newtheorem{thm}{Theorem}
\newtheorem{pro}[thm]{Proposition}
\newtheorem{lm}[thm]{Lemma}
\newtheorem{cor}[thm]{Corollary}
\theoremstyle{definition}
\newtheorem{df}[thm]{Definition}
\newtheorem{remark}[thm]{Remark}
\newtheorem{hyp}[thm]{Hypothesis}

\newtheorem{notn}[thm]{Notation}

\newcommand{\doubleslash}{/\negthinspace/}

\def\G{\mathrm{G}}
\def\H{\mathrm{H}}
\def\X{\mathrm{X}}
\def\mm{\mathrm{m}}

\def\b{\mathfrak{b}}
\def\g{\mathfrak{g}}
\def\n{\mathfrak{n}}
\def\q{\mathfrak{q}}
\def\z{\mathfrak{z}}
\def\h{\mathfrak{h}}
\def\t{\mathfrak{t}}
\def\s{\mathfrak{s}}

\def\Gal{\mathrm{Gal}}
\def\N{\mathrm{N}}
\def\Z{\mathrm{Z}}
\def\GL{\mathrm{GL}}
\def\SL{\mathrm{SL}}
\def\Sp{\mathrm{Sp}}
\def\Sym{\mathrm{Sym}}

\def\sl{\mathfrak{sl}}

\DeclareMathOperator{\spec}{Spec}

\DeclareMathOperator{\Hom}{Hom}
\DeclareMathOperator{\rk}{rk}
\def\B{\mathrm{B}}

\let\Sec\S
\def\S{\text{S}}
\def\T{\mathrm{T}}
\DeclareMathOperator*{\Int}{Int}
\DeclareMathOperator*{\Ad}{Ad}
\def\QQ{\mathbb{Q}}
\def\CC{\mathbb{C}}
\def\NN{\mathbb{N}}
\def\RR{\mathbb{R}}
\def\ZZ{\mathbb{Z}}
\def\GG{\mathbb{G}}
\def\mcA{\mathcal{A}}
\def\mcB{\mathcal{B}}

\def\mcL{\mathcal{L}}
\def\mcY{\mathcal{Y}}
\DeclareMathOperator{\Res}{Res}

\def\tn{\mathrm{tn}}
\def\der{\mathrm{der}}
\def\scn{\mathrm{sc}}
\DeclareMathOperator{\ad}{ad}
\def\mO{\mathfrak O}
\DeclareMathOperator*{\meas}{meas}
\def\charfn{\mathbbm{1}}
\DeclareMathOperator*{\Lie}{Lie}
\newcommand{\dsum}{\displaystyle\sum}
\newcommand{\Fbar}{\overline{F}}
\newcommand{\Xs}{X_{\mathrm{s}}}

\newcommand{\Gm}{\GG_{\mathrm{m}}}

\def\ggood/{$\fg$-good}
\def\gFgood/{$\fg$-$F$-good}
\def\gEgood/{$\fg$-$E$-good}
\def\ngood/{$\n^-$-good} 
\def\esstame/{not too wild}
\def\blahblah/{condition (T)}

\newcommand{\set}[2]{
\left\{
\left.
#1\vphantom{#2\bigl(\bigr)}\,
\right|
\,#2
\right\}
}

\def\bZ{\mathbb{Z}}
\def\fg{\mathfrak{g}}
\def\fh{\mathfrak{h}}
\newcommand{\ov}{\overline}

\usepackage{comment}

\begin{document}
\title{On Kostant Sections and Topological Nilpotence}
\author{Jeffrey D. Adler}
\address{
Department of Mathematics and Statistics \\
American University \\
4400 Massachusetts Ave NW \\
Washington, DC  20016-8050 \\
USA 
}
\email{jadler@american.edu}
\author{Jessica Fintzen}
\address{
Department of Mathematics\\
University of Michigan \\
2074 East Hall\\
530 Church Street\\
Ann Arbor, MI 48109 \\
USA
}
\email{fintzen@umich.edu}
\author{Sandeep Varma}
\address{
School of Mathematics \\
Tata Institute of Fundamental Research \\
Homi Bhabha Road \\
Colaba, Mumbai  \\
India
}
\email{sandeepvarmav@gmail.com}

\thanks{%
Jessica Fintzen's research was partially supported by the Studienstiftung des deutschen Volkes and a postdoctoral fellowship of the German Academic Exchange Service (DAAD)}

\subjclass[2010]{22E35, 20G25}
\date{January 31, 2018}

\begin{abstract}
Let $\G$ denote a connected, quasi-split reductive group over
a field $F$ that is complete with respect to a discrete valuation
and that has a perfect residue field.
Under mild hypotheses, we
produce a subset of the Lie algebra $\g(F)$ that picks out
a $\G(F)$-conjugacy class in every stable, regular, 
topologically nilpotent conjugacy class in $\g(F)$. This generalizes an earlier result obtained by DeBacker and one of the authors under stronger hypotheses.
We then show that
if $F$ is $p$-adic,
then the characteristic
function of this set behaves well with respect to endoscopic transfer.
\end{abstract}
\maketitle

\section{Introduction}

Let $\G$ be a (quasi-split) connected reductive group over a field $F$,
with Lie algebra $\g$.
In \cite{Kos63}, assuming $F$ to be an algebraically closed
field of characteristic zero,
B. Kostant gave a simple, elegant
and remarkably useful recipe to construct sections (now called
\emph{Kostant sections}) to the geometric invariant theory (GIT) quotient
$\g \rightarrow \g \doubleslash \G$,
taking values in the set of regular elements
of $\g$. When $\G$ is a
general linear group, the sections thus constructed include
as a special case the companion matrix
found in elementary linear algebra. In this paper, taking $F$ to
be a $p$-adic field (or more generally
a complete discrete valuation field), we make the case that constructing such sections
integrally and studying them help us better understand a certain subset of
$\g(F)$ that has shown up in certain problems related
to harmonic analysis on $p$-adic groups.
Based on these results we  exhibit new examples of pairs of functions
that match each other
in the sense of endoscopic transfer for Lie algebras.

Let $F$ be a complete discrete valuation field,
and let $p\geq 0$ denote the characteristic of the residue field
$\kappa$,
which we assume to be perfect.
Let $\G$ be
a quasi-split, connected, reductive group over $F$,
and $Y \in \g(F) = \Lie \G(F)$
a regular nilpotent element. The first result
that we prove in this paper is the following statement
that sharpens and extends the main result of \cite{AD04}.
Under mild hypotheses (see later in this introduction),
we present a neighborhood of $Y$ --- let us call it $Y + \g_{x, 0+}$
in view of notation that will be established later ---
with the following two properties:
\begin{enumerate}[(a)]
\item
if $X \in \g(F)$, then the $\Ad \G(\Fbar)$-orbit
of $X$ intersects $Y + \g_{x, 0+}$ if and only if
$X$ is regular and topologically nilpotent; and
\item
if $X \in \g(F)$ is regular and topologically nilpotent, then
the
intersection $\Ad \G(\Fbar)(X) \cap \left( Y + \g_{x, 0+} \right)$
is a single $\Ad \G_{x, 0+}$-orbit,
for a fixed (independent of $X$) bounded open subgroup
$\G_{x, 0+} \subset \G(F)$.
\end{enumerate}
As the notation suggests,
in (a) and (b) above, $x$ stands for a certain point in the
Bruhat--Tits building of $\G$, and $\g_{x, 0+}$ (resp., $\G_{x, 0+}$)
is the associated Moy--Prasad lattice (resp., subgroup).

The characteristic function
$\phi = \charfn_{Y + \g_{x, 0+}}$ of the set
$Y + \g_{x, 0+}$, like its several variants mentioned below, has
been no stranger to harmonic analysis, partly
because of the role played by nilpotent elements in representation
theory, and partly because orbital integrals, whose evaluation on
$\phi$ is facilitated by (b) above, are important to harmonic analysis.
For instance, when
$\G = \GL_n$ over a $p$-adic field $F$, J. Repka
used the function $\phi$
(or rather, its composite with
$g \mapsto g - 1$) in order to compute Shalika
germs associated to the regular unipotent conjugacy class
of $\G(F)$ (see \cite{Rep84}).
More general
variants of this set have shown up in many important
works on the subject, especially in the context of character
theory and nilpotent orbits --- for instance the set
$n_{\alpha} + L_j$ in \cite[the proof of Lemma 6]{How74},
its generalization $Y + \varpi^n {L'}^{\perp}$ in 
\cite[the proof of Proposition I.11]{MW87}, and the sets
$X + \g_{F^*}^+$ in \cite{Deb02b}.

As alluded to earlier, S. DeBacker and the first-named author proved
(see \cite[Proposition 1]{AD04}) the aforementioned
result on the properties (a) and (b) of the set $Y + \g_{x, 0+}$
under more restrictive hypotheses, and only for regular
semisimple elements $X$.
The hypotheses of \cite{AD04}
require that certain
reductive groups over finite fields admit suitably
well behaved $\sl_2$-triples.
For example,
if $\G = \Sp_{2n}$,
then the use of 
\cite[Hypothesis 4.2.3]{Deb02a}
requires $p$ to be at least $4n + 1$ if nonzero.
Our result, by contrast, is always valid for $p>5$
if $\G$ is semisimple, tamely ramified
and has no factor of type $A_n$.
Moreover, if $\G = \GL_n$, then we impose no restriction on $p$.
We hope that our new presentation,
in addition to weakening hypotheses,
makes certain aspects of the role of the Kostant
section more explicit.

The second result says that,
if $F$ is a finite extension of $\QQ_p$ and
$p$ satisfies a few further hypotheses,
then the function $\phi$
behaves well with respect to endoscopic
transfer.
In other words,
suppose that the conditions of Hypothesis \ref{hyp:Sec3} are
satisfied by $\G$ as well as by a group $\H$ that is endoscopic for $\G$,
and that
$\phi_{\H} = \charfn_{Y_{\H} + \h_{x_{\H}, 0+}}
\in C_c^{\infty}(\h(F))$ is the function obtained by applying the
construction of $\phi$ to $\H$ in place of $\G$.
The statement then is that, up to an
explicitly computable nonzero scalar, $\phi$ and
$\phi_{\H}$ have matching orbital integrals.
This fact can be used 
to cook up more pairs of functions with matching
orbital integrals.
The interest in such results comes from the fact that the theory
of endoscopy uses orbital integrals to relate harmonic analysis
on $\H(F)$ with that on $\G(F)$, but the supply of explicit pairs of functions
with matching orbital integrals in the literature is somewhat limited
(for an example of some deep work on this question, see the paper
\cite{KV12} by D. Kazhdan and Y. Varshavsky).

Now let us remark on some considerations
that motivated our proof of (a) and (b) above.
The proof of \cite[Proposition 1]{AD04}
makes use of a hypothesis that $Y$ can be completed to an $\sl_2$-triple
containing another nilpotent element $Y'$ such that
the following equation holds (as also its analogues
over finite extensions of $F$)
\[
Y + \g_{x, 0+} = \Ad \G_{x, 0+} \left(Y + C_{\g_{x, 0+}}(Y') \right),
\]
where $C_{\g_{x, 0+}}(Y')$ is the centralizer of $Y'$ in $\g_{x, 0+}$.
Note that $Y + C_{\g_{x, 0+}}(Y')$ is part of the Kostant section
$Y + C_{\g(F)}(Y')$ (cf.\ \cite{Kos63}) attached
to $Y$ and $Y'$. Thus, the assertion of \cite{AD04} is that
$Y + C_{\g_{x, 0+}}(Y')$ is precisely the set of topologically nilpotent
elements in the Kostant section $Y + C_{\g(F)}(Y')$.

Now suppose $\G$ is unramified.
Then $x$ is hyperspecial and gives
a realization of $\G$ as a reductive group
over the ring $\mO$ of integers of $F$.
In this case
(for $p$ satisfying the hypotheses of \cite{Deb02a}),
$Y + C_{\g_{x, 0}}(Y')$ is a Kostant section for $\G$ \emph{over $\mO$},
from which the above claim about $Y + C_{\g_{x, 0+}}(Y')$ follows
easily.
Moreover, as alluded to above,
one may make this argument work for $p$
satisfying much weaker hypotheses,
at least provided we replace $C_{\g_{x, 0}}(Y')$ by a different
$\mO$-submodule of $\g_{x, 0}$
(compare with Remark 19${}^\prime$ of \cite{Kos63}).

However, since the main result of \cite{AD04} is valid
even when $\G$ is not unramified, one might wish for a proof that
still explicitly incorporates the above idea and yet works for
ramified groups, at least under mild hypotheses.
This is what we do here in \Sec\ref{AD04 result}.

\textbf{A few remarks on the hypotheses.}
Several results that we prove in
\Sec\ref{AD04 result} and \Sec\ref{pairs of matching functions}
of this paper do not hold for all reductive groups $\G$
and all residue field characteristics $p$.
In order to nevertheless state our results in large generality,
we will need to use the following conditions
(see
Definitions \ref{df:good}, \ref{df:esstame}, and \ref{df:blah},
and Hypothesis \ref{hyp:Sec3}):
\begin{itemize}
\item that of $p$ being \ggood/ --- namely, a `good prime' for $\G$ in the well known sense
of \cite{Spr66};
\item that of $p$ being \ngood/ --- a certain condition weaker than \ggood/;
\item that of $p$ being \gFgood/ --- a variation on `\ggood/' adapted to the graded
Lie algebra associated to a suitable Moy--Prasad filtration on $\g(F)$;
\item that of $\G$ being \esstame/ --- roughly speaking, tameness conditions
on the absolutely simple groups that $\G$ is built from, and on the `interaction'
between $\T$ and $\T_{\der} := \T \cap \G_{\der}$, where $\T$ is
a maximal torus of $\G$ and $\G_{\der}$ is the derived group of $\G$;
\item that of $\G$ satisfying \blahblah/ --- i.e., a maximal torus of $\G$ becomes 
an induced torus after passing to a tamely ramified extension of $F$ (this ensures
the existence of mock exponential maps);
\item that of two groups $\G$ and $\H$ satisfying Hypothesis
\ref{hyp:Sec3} --- a set of conditions designed to ensure a few extra conveniences
such as the existence of a Kazhdan--Varshavsky quasi-logarithm.
\end{itemize}

Note that things simplify considerably
when $\G$ is split over a tamely ramified extension of $F$. In this case,
$\G$ is automatically \esstame/ and satisfies
\blahblah/. Further, in this case, $p$ is \gFgood/ if and only if it is
\ggood/.

Suppose $\G$ is defined and split over $\mO$.
Our construction of a candidate for being named a `Kostant' section to
the adjoint quotient map $\g \rightarrow \g \doubleslash \G$ over $\mO$ already
requires $p$ to be \ngood/, and this condition turns out to suffice
to guarantee the existence of such sections. The assertion (a) stated early
on in this introduction requires $\G$ to be \esstame/ and $p$ to be \ngood/,
while assertion (b) requires that $p$ is in fact \gFgood/
and that $\G$ also satisfy \blahblah/. As stated
earlier, Hypothesis \ref{hyp:Sec3} shows up and is required only in
\Sec\ref{pairs of matching functions}, for our results involving
endoscopy.

\textbf{Structure of the paper.}
After setting up basic notation (\Sec\ref{basic notation})
and recalling a definition of topological nilpotence
(\Sec\ref{tn definition}),
we construct in \Sec\ref{Kostant section split}
an integral version of the Kostant section when $\G$ is split,
under hypotheses somewhat milder
than those in \cite{Ric16} (see Hypothesis \ref{hyp for Kostant section}).
In fact, our Kostant section is built using a certain $\mO$-module, and we see that whenever this module exists,
we do indeed obtain a Kostant section (Proposition \ref{Kostant section pro}).
The main objects that concern us
($\G$, $x$, $Y$, etc.) are set up in \Sec\ref{set up}, using constructions
recalled in \Sec\ref{pinnings etc}.
In \Sec\ref{assumptions on p}, we define what it means for the residual
characteristic $p$ of $F$ to be
``\ggood/'',
``\gFgood/'',
or
``\ngood/''
for $\G$.
These conditions assure that $\ad Y$ has good regularity properties 
over the residue field $\kappa$ of $F$.

In \Sec\ref{Kostant sections over F OE}, we first consider the case where $\G$
is split over a tamely ramified extension $E$ of $F$,
and use an integral Kostant section
$Y+L_{\mO_E}$
for the base change
of $\G$ to $E$, together with
Lemma \ref{Kostant section over F},
to get a Kostant
section $Y + L_F$ for $\G$ (over $F$), whose subset of topologically
nilpotent elements is precisely a set of the form $Y + (L_F \cap \g_{x, 0+})$
in a sense alluded to above.
In the same section, under the assumption that
$p$ is ``\gFgood/'', we extend this last assertion to $\G$ satisfying
the weaker condition of being ``\esstame/''.
In fact,
this last condition is necessary in order to have an $F$-subspace $L_F$
that is compatible with the $\mO_E$-module $L_{\mO_E}$.
The Kostant section supplied by \Sec\ref{Kostant sections over F OE}
is related to $Y + \g_{x, 0+}$ in
\Sec\ref{relating with Kostant section} under the condition
that $p$ is \gFgood/ for $\G$, when $\G$ satisfies
\blahblah/.
Our generalization of the main result of \cite{AD04} is then deduced
in \Sec\ref{with AD04 main result}.

The assertion about endoscopic transfer is proved in \Sec\ref{main result 2}.
Since the
notion of topological nilpotence behaves well with respect to the
matching of conjugacy classes in the theory of endoscopy, there
are two main issues to take care of: that of the transfer factors
and that of the normalized orbital integrals.
It turns out that Theorem 5.1 of
\cite{Kot99}, recalled along with a review of endoscopic
transfer for Lie algebras in \Sec\ref{adjoint quotients map},
immediately tells us how to handle the former.
While the property (b) 
of the set $Y + \g_{x, 0+}$ discussed near the beginning of this
introduction makes it believable
that the orbital integrals can be easily handled, one needs to
do a bit more work since different elements in this set generally have
nonisomorphic centralizers.
However, it turns out
that the $\Ad \G(F)$-orbits of elements of $Y + \g_{x, 0+}$
have measures arising from Kirillov's symplectic forms on them,
which are easily evaluated on the intersection of $Y + \g_{x, 0+}$
with these orbits, thanks to $Y$ and $\g_{x, 0+}$ being conveniently adapted
to these forms (see \Sec\ref{studying omegaX'},
particularly Lemma \ref{selfdual}).
Executing this requires passage to the Lie algebra,
to which end we impose a stronger hypothesis on
$p$ (Hypothesis \ref{hyp:Sec3})
so as to make use of a Kazhdan--Varshavsky quasi-logarithm; see
\Sec\ref{KV consequences}.
Further, in \Sec\ref{omegas relation}, specifically Lemma
\ref{omegaX omegaX'} below, we relate the measures arising
from Kirillov's symplectic form construction with a different set
of choices
for these measures that is fixed in \Sec\ref{comments on measures},
the latter being better suited for 
studying endoscopic transfer.
This lets us finish the computation of the orbital integrals in
\Sec\ref{orbital integral computation}.
In fact, it is our comparison between measures in Lemma \ref{omegaX omegaX'}
that accounts for the `$\Delta_{\text{IV}}$' transfer factors,
or equivalently, the normalization of the orbital integrals.
Finally, \Sec\ref{more pairs of matching functions} discusses how
standard techniques allow us to cook up more pairs of matching functions
starting from $(\phi, \phi_{\H})$ as above.

\textbf{Acknowledgements.}
The third-named author thanks Radhika Ganapathy,
for it was collaborative work with her that brought many
of the points here to his attention;
and Dipendra Prasad,
for helpful pointers and continued encouragement and support.
The authors thank Stephen DeBacker for useful conversations,
and the referee for helpful suggestions.
For hospitality during the writing of this paper,
the authors thank
the Indian Institute for Science Education and Research (Pune),
the Institute for Mathematical Sciences (National University of Singapore),
and
the Centre International de Rencontres Math\'ematiques (Luminy).

\section{Topologically nilpotent elements in a Kostant Section}
\label{AD04 result}
\subsection{Basic notation} \label{basic notation}
Let $F$ be a complete, discretely valued field with perfect
residue field,
$\Fbar$
a fixed algebraic closure of $F$.
For any extension $E$ of
$F$ in $\Fbar$, $\mO_E$ will denote the ring of integers of $E$, and,
if $E/F$ has finite ramification degree,
$\varpi_E$ will denote a uniformizer in $\mO_E$ and $\kappa_E$ the residue
field $\mO_E/\varpi_E \mO_E$.
Let $p\geq 0$ denote the characteristic of $\kappa_F$
and $\ov \kappa_F$ an algebraic closure of $\kappa_F$.
Let $|\cdot|$ stand for an absolute value on $F$,
extended uniquely to $\Fbar$.
Throughout, $\G$ will be a connected reductive group over a ring
which, most of the time
(everywhere but in \Sec\ref{Kostant section split}), will equal $F$.
We will let $\Z^0$ denote the identity connected component of the
center of $\G$,
$\G_{\der}$ the derived group of $\G$,
$\G_{\scn}$ the simply connected cover of $\G_{\der}$,
and
$\G_{\ad}$ the adjoint group of $\G$. Thus, we have obvious maps
$\G_{\scn} \rightarrow \G \rightarrow \G_{\ad}$. However, we will
make the following exception to this convention: if $\T \subset
\G$ is a maximal torus, $\T_{\der}, \T_{\scn}$ and $\T_{\ad}$ will
denote the maximal tori of $\G_{\der}, \G_{\scn}$ and $\G_{\ad}$
determined by $\T$.
We will follow standard notation in denoting algebraic groups
using upper case roman letters and their Lie algebras using the corresponding
fraktur letters, e.g., $\g_{\der}$ is the Lie algebra of $\G_{\der}$.

For any extension $E/F$ in $\Fbar$ of finite ramification degree,
let $\mcB(\G,E)$ denote the reduced Bruhat--Tits building of $\G$ over $E$.
If $E/F$ is Galois, then we have a canonical injection
$\mcB(\G,F) \longrightarrow \mcB(\G,E)^{\Gal(E/F)}$.
For $x \in \mcB(\G, E)$ and $r \in \RR$, write
$\g(E)_{x, r} \subset \g(E)$ and (when $r \geq 0$)
$\G(E)_{x, r} \subset \G(E)$ for the corresponding Moy--Prasad lattice
and subgroup, respectively.

We will omit $E$ from all of the notation above when $E=F$,
e.g., $\varpi$ will mean $\varpi_F$.

If $\X$ is a scheme over a ring $R$, and $R'$ is an $R$-algebra,
then $\X_{R'}$ will usually
denote the base change of $\X$ to $R'$.

If $\T\subseteq \G$ is a maximal torus, then $R(\G,\T)$ will denote
the set of (absolute) roots of $\G$ with respect to $\T$.
If $\B$ is a Borel subgroup of $\G$ containing $\T$, then
$R(\B,\T) \subset R(\G,\T)$ will denote the corresponding
set of positive roots, and $\Delta(\B,\T)$ the corresponding
set of simple roots.

\subsection{Topological nilpotence} \label{tn definition}

In this section we take $\G$ to be a connected reductive group over $F$.

\begin{df} \label{top nilp}
Let $X \in \g(F)$ and let $\T$ be an 
$\Fbar$-torus in $\G$ such that $\t(\Fbar)$ 
contains
the semisimple part $\Xs$ of $X$.
We say that $X$
is \emph{topologically nilpotent} if
\[
\text{$|d\mu(\Xs)| < 1$
for all $\mu \in \X^*(\T) := \Hom_{\text{alg}}(\T, \Gm)$}.
\]
\end{df}

\begin{remark}
\begin{enumerate}[(a)]
\item
It is easy to see that this definition is independent of the choice
of a torus $\T$ whose Lie algebra contains $\Xs$.
\item
This definition is one of several that are commonly used.
In
Remark \ref{reconciling topological nilpotence},
we will see that it is equivalent to the one 
given in \cite{AD04},
once one has assumed that $p$ is \emph{\ggood/ for $\G$}
and that $\G$ is \emph{\esstame/},
concepts that will be introduced
later below.
\end{enumerate}
\end{remark}

\begin{notn}
Write $\g(F)_{\tn}$ for the set of topologically nilpotent elements of
$\g(F)$.
\end{notn}

We now give
a description of topological nilpotence using the adjoint quotient,
by which we mean the geometric invariant theory (GIT) quotient $\g \doubleslash \G$
corresponding to the adjoint action of $\G$ on $\g$.

\begin{notn} \label{adjoint quotient notation}
	Write $\chi : \g \longrightarrow \q := \g\doubleslash\G$ for the
	adjoint quotient map of $\g$ over $F$. In future sections, we will
	assume this notation with $F$ replaced by any ring $R$ over which
	$\G$ is defined.
\end{notn}

Note that any pinning of $\G_E$, where $E$ is a finite extension of
$F$ that splits $\G$, determines an $\mO_E$-model $\G_{\mO_E}$ for the base change
$\G_E$ of $\G$ to $E$
(see \Sec\ref{pinnings etc} and \Sec\ref{set up} below).
Thus, $\q_E$ gets the
integral model $\spec \mO_E[\g_{\mO_E}]^{\G_{\mO_E}}$, which is independent
of the choice of the $E$-pinning that defined $\G_{\mO_E}$ (as these
pinnings are all $\G_{\ad}(E)$-conjugate). Thus,
$\q_E$ gets a canonical $\mO_E$-model for any finite extension $E$ of
$F$ that splits $\G$, and if two such extensions $E_1$ and
$E_2$ are contained in another such extension $E_3$, then
the base changes of $\q_{\mO_{E_1}}$ and $\q_{\mO_{E_2}}$ to
$\mO_{E_3}$ agree as they both equal $\q_{\mO_{E_3}}$ (\cite[Lemma 2]{Ses77}). Thus, now
we may talk of $\q(\mO_{\Fbar})$, as well as of
$\q(\kappa_{\Fbar}) = \q(\bar \kappa)$.

\begin{lm} \label{unramified tn}
Let $\q(1)$ denote the fiber over $\chi(0)$ of
the reduction map $\q(\mO_{\Fbar}) \longrightarrow \q(\bar \kappa)$.
We have
\[
\g(F)_{\tn} = \chi|_{\g(F)}^{-1}(\q(1)).
\]
\end{lm}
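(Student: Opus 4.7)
The plan is to reduce to the case $X = \Xs$ semisimple via the Jordan decomposition, translate topological nilpotence into a statement about a Cartan $\t$ over a splitting field, and then pass between $\t$ and $\q$ via the Chevalley restriction map.

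\emph{Reduction and forward direction.} Over $\Fbar$, write $X = \Xs + X_n$ for the Jordan decomposition; since $\Xs$ lies in the closure of the $\G(\Fbar)$-adjoint orbit of $X$ and $\chi$ is constant on orbit closures, $\chi(X) = \chi(\Xs)$, and topological nilpotence depends only on $\Xs$ by Definition \ref{top nilp}. So I may assume $X = \Xs$ lies in $\t(\Fbar)$ for some maximal torus $\T$ of $\G_{\Fbar}$. Pick a finite extension $E/F$ in $\Fbar$ that splits $\G$ and over which $\T$ is a split torus. Then an element of $\t(\Fbar)$ is topologically nilpotent iff $d\mu(X) \in \varpi_{\Fbar}\mO_{\Fbar}$ for every $\mu \in \X^*(\T)$, iff $X$ is an $\mO_{\Fbar}$-point of $\t$ that reduces to $0 \in \t(\bar\kappa)$. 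The forward inclusion is then immediate: the $\mO_E$-morphism $\chi \colon \t_{\mO_E} \to \q_{\mO_E}$ sends $X \in \t(\mO_{\Fbar})$ to $\chi(X) \in \q(\mO_{\Fbar})$ and commutes with reduction modulo $\varpi_{\Fbar}$, so $\overline{\chi(X)} = \chi(\bar X) = \chi(0)$.

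\emph{Converse direction.} Assume $\chi(X) \in \q(1)$. For each $\mu \in \X^*(\T)$, the linear function $d\mu \in \mO_E[\t]$ is integral over $\mO_E[\t]^W$ (since $W$ is finite); by decomposing a monic integrality relation into its $\Gm$-weight components (with $\Gm$ acting on $\t$ by scaling), one obtains a homogeneous such relation $(d\mu)^N + \sum_{i<N} c_i (d\mu)^i = 0$ with $c_i \in \mO_E[\t]^W$ of positive $\Gm$-weight $N - i$. Via the Chevalley restriction identification $\mO_E[\q] \cong \mO_E[\t]^W$, each $c_i$ is a regular function on $\q$ of positive $\Gm$-weight, so vanishes at the fixed point $\chi(0)$. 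Evaluating the relation at $X$: since $\chi(X) \in \q(\mO_{\Fbar})$ reduces to $\chi(0)$, the value $c_i(X)$ lies in $\varpi_{\Fbar}\mO_{\Fbar}$ for each $i<N$. Thus $d\mu(X)$ is a root of a monic polynomial in $\mO_{\Fbar}[T]$ whose lower coefficients all lie in $\varpi_{\Fbar}\mO_{\Fbar}$, and the Newton polygon forces $|d\mu(X)| < 1$. As $\mu$ was arbitrary, $X$ is topologically nilpotent.

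\emph{Main obstacle.} The technical crux is the integral Chevalley restriction $\mO_E[\q] \cong \mO_E[\t]^W$, together with the compatible $\Gm$-grading; this is standard in good residue characteristic but merits care in general. When it is unavailable, one can instead embed $\G$ into a suitable $\GL_n$ and use the coefficients of the characteristic polynomial coming from this embedding in place of the full ring of $\G$-invariants, provided the weights of the embedding generate a sublattice of $\X^*(\T)$ of finite index prime to $p$; either workaround yields the same conclusion.
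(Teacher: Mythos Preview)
Your forward direction is fine, and the overall reduction to semisimple $X$ via $\chi(X)=\chi(\Xs)$ matches the paper's. The gap is in the converse: you invoke the integral Chevalley isomorphism $\mO_E[\q]\cong\mO_E[\t]^W$ to view the $c_i$ as functions on $\q$ and evaluate them via $\chi(X)$, but this isomorphism can fail (indeed the paper explicitly avoids it; see Remark \ref{what does Kostant section pro say} and the reference to \cite{CR10}). Without it, the $c_i\in\mO_E[\t]^W$ need not come from $\mO_E[\g]^{\G}$, and you have no way to conclude $c_i(X)\in\varpi_{\Fbar}\mO_{\Fbar}$ from $\chi(X)\in\q(1)$ alone. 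Your integrality-plus-Newton-polygon argument would be elegant if this held, and would even dispense with the paper's separate treatment of the case $\Xs\notin\t(\mO_{\Fbar})$; but as stated it is incomplete.

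Your fallback---embed $\G$ in $\GL_n$ and use characteristic-polynomial coefficients---is exactly what the paper does. The paper chooses a faithful representation $\iota\colon\G\hookrightarrow\GL(L)$ of group schemes over $\mO$, i.e., a closed immersion of smooth $\mO$-group schemes, so that $d\iota$ is injective on $\kappa$-points automatically; your caveat about the weight sublattice having index prime to $p$ is therefore unnecessary. The coefficients $f_1,\dots,f_n$ of the characteristic polynomial of $d\iota$ lie in $\mO[\g]^{\G}$ by construction, bypassing the Chevalley question entirely. The paper then argues: if $\bar X_s\neq 0$ in $\t(\bar\kappa)$ then $d\iota(\bar X_s)\neq 0$ is semisimple, so some $\overline{f_i(X_s)}\neq 0$; and if $X_s\notin\t(\mO_{\Fbar})$, a scaling trick with a homogeneous $f_i$ forces $f_i(X_s)\notin\mO_{\Fbar}$, hence $\chi(X)\notin\q(\mO_{\Fbar})$.
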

\begin{proof}
Intersecting with $\g(F)$ yields this equality from its
analogue over a finite extension $E$ of $F$,
so we may and do assume $\G$ to be defined and split over $\mO$.

Let us first prove that, for any $X \in \t(\mO_{\Fbar})$, where $\T$
is an $\mO$-split maximal torus of $\G$, the following are equivalent:
\begin{enumerate}[(i)]
\item The image of $X$ in $\t(\bar \kappa)$ vanishes; and
\item $\chi(X)$ belongs to the inverse image of $\chi(0)$ under $\q(\mO_{\Fbar})
\rightarrow \q(\bar \kappa)$.
\end{enumerate} 
It is easy to see that (i) implies (ii), so it is enough to show that if the image
$\bar X$ of $X$ in $\t(\bar \kappa)$ does not vanish, then there exists a non-constant homogeneous polynomial $f
\in \mO[\g]^{\G}$ such that $\overline{f(X)} \neq 0$.
Choose any faithful representation $\iota: \G \hookrightarrow \GL(L)$ of group schemes over
$\mO$, where $L$ is a finite free $\mO$-module.
The coefficients of
the characteristic polynomial map on $\text{End}_{\mO}(L)$
determine homogeneous, non-constant elements
$f_1, \dots, f_n \in \mO[\g]^{\G}$,
where $n$ is the $\mO$-rank of $L$ (we are omitting
the coefficient of the highest degree term).
If $\bar X $ is nonzero,
then $\iota(\bar X)$ is a nonzero semisimple element of $\GL(L)(\ov \kappa)$, and hence some coefficient of its characteristic polynomial does not vanish.
But this means that $\overline{f_i(X)} \neq 0$ for some $i$, proving the claim.

Now let $X \in \g(F)$, and let
$X_s'$ be any conjugate in $\t(\Fbar)$ of the semisimple
part of $X$. By definition, $X$ is topologically nilpotent if and
only if $|d\mu(X_s')| < 1$ for all $\mu \in \X^*(\T)$, i.e., if and only
if $X_s'$ both belongs to $\t(\mO_{\Fbar})$ and has
zero as its image in $\t(\kappa_{\Fbar}) = \t(\bar \kappa)$. If $X_s' \not\in \t(\mO_{\Fbar})$, then we can multiply
$X_s'$ by some non-unit $a \in \mO_{\Fbar}$ such that $a X_s'$ belongs to
$\t(\mO_{\Fbar})$ and has nonzero image in $\t(\bar \kappa)$.
By the proof of the equivalence of (i) and (ii) above applied to $a X_s'$ in place of $X$,
we see that there exists a non-constant homogeneous polynomial $f \in \mO_{\Fbar}[\g]^G$ such that the image of $f(aX_s') \in \mO_{\Fbar}$ in $\bar \kappa$ is nonzero, i.e. $f(aX_s')$ is not divisible by $a$. This implies that $f(X_s') \notin \mO_{\Fbar}$, and hence $\chi(X)=\chi(X_s') \not \in \q(\mO)$, in particular $X \notin \chi|_{\g(F)}^{-1}(\q(1))$. Now the equality $\g(F)_{\tn} = \chi|_{\g(F)}^{-1}(\q(1))$  follows from the equivalence of (i) and (ii) above applied to $X_s'\in \t(\mO_{\Fbar})$ in place of $X$.
\end{proof}

\subsection{Kostant sections for split groups} \label{Kostant section split}
 
In this section,
we will take $\G$ to be a connected split reductive group defined
over $\ZZ$, but will concern ourselves with its base change $\G_R$
to a ring $R$ (commutative, with unity).
Fix a pinning $(\B, \T, \{X_{\alpha}\}_{\alpha \in \Delta})$,
for $\G$ that is defined over $\ZZ$, with $\Delta = \Delta(\B, \T)$.

\begin{notn} \label{new lambda weights}
Let $Y = \sum_{\alpha \in \Delta} X_{\alpha}$.
Write
\begin{equation} \label{new lambda weight spaces}
\g = \bigoplus_{j \in \ZZ} \g(j)
\end{equation} 
for the weight space decomposition for $\g$, over $\ZZ$, with
respect to $\Ad \circ \lambda$, where $\lambda \in \X_*(\T)$ is
the sum of {\em all} the coroots of $\T$ in $\B$ (not just
the simple ones). Let $\B^-$ be the
Borel subgroup of $\G$ that is opposite to $\B$ with respect to $\T$,
with $\N^-$ as its unipotent radical.
\end{notn}

It is easy to see
that $\langle \alpha, \lambda \rangle = 2$
for all $\alpha \in \Delta$, so that $\g(0) = \t$ and
\begin{equation} \label{bb-n- weights}
\b = \bigoplus_{j \geq 0} \g(j), \ \ \b^- = \bigoplus_{j \leq 0} \g(j),
\ \ \n^- = \bigoplus_{j < 0} \g(j).
\end{equation}
Further, 
\begin{equation} \label{ad Y on weight spaces}
\text{for all $j \in \ZZ$}, \quad [Y, \g(j)] \subset \g(j + 2).
\end{equation}

For the rest of \Sec\ref{Kostant section split}
we assume the following statement.

\begin{hyp} \label{hyp for Kostant section}
The module $[Y, \n^-_R]$ has a $\lambda$-invariant complement in $\b^-_R$
of rank $\rk_R \b^-_R - \rk_R \n^-_R$.
\end{hyp}
Call such a complement $\Xi$, and set $\mathcal S = Y + \Xi$.

The hypothesis is equivalent to requiring the following:
for each $j < 0$, the $R$-submodule
$[Y, \g_R(j)]$ (i.e., $[Y, \g(j)_R]$)
has a complement in $\g_R(j + 2)$
that is free of rank $\rk_R \g_R(j + 2) - \rk_R \g_R(j)$.

Note that $[Y, \g_R]$ need not have any complement in $\g_R$
even when the hypothesis is satisfied.
For example, consider $\G=\SL_p$ over $R=\ZZ_p$.

\begin{remark} \label{exclusions in hyp for Kostant section}
We can rephrase Hypothesis \ref{hyp for Kostant section} as follows.
Let $N$ be the square-free natural number whose prime factors
are precisely the ones occurring in the following list:
\begin{itemize}
\item $2$ if $\G_{\scn}$ has a factor of the form
$B_n$ ($n \geq 3$), $D_n$ ($n \geq 4$),
or $G_2$;
\item $2$ and $3$  whenever $\G_{\scn}$ has a factor of the form
$F_4$, $E_6$, or $E_7$; and
\item $2$, $3$, and $5$ whenever $\G_{\scn}$ has a factor
of the form $E_8$;
\item the primes dividing the order of $\pi_1(\G_{\der})$.
\end{itemize}
Note that the last condition is superfluous
unless $\G$ has a factor of the form $C_n$ or $A_n$.

By (2.4), (2.5), and (2.6) of \cite{Spr66},
Hypothesis \ref{hyp for Kostant section} is equivalent to $N$ being
invertible in $R$ (for this amounts, in the
notation \emph{op.\ cit.}, to saying that the elementary divisors
of $t_i$ are all invertible in $R$ for $i < 0$).
\end{remark}

Thus, for instance,
if $\G = \Sp_{2n}$ ($n \geq 2$),
$\SL_n$, or $\GL_n$, there is no restriction on $R$.

\begin{notn}
In the rest of this subsection we will abuse notation by confusing
$\G$, $\b$, $\n^-$, etc.,
with their base changes to $R$.
\end{notn}

Note that $\N^-$ acts by the adjoint action on $Y + \b^-$.
We have a composite
map $\mathcal S \hookrightarrow \g \rightarrow \g\doubleslash\G$,
which factors through the composite map
$\mathcal S \hookrightarrow Y + \b^- \rightarrow (Y + \b^-)\doubleslash\N^-$.

In this section we prove,
thanks to Hypothesis \ref{hyp for Kostant section}, that:
\begin{pro} \label{Kostant section pro}
Both maps in the sequence $\mathcal S \rightarrow (Y + \b^-)\doubleslash\N^-
\rightarrow \g\doubleslash\G$ are isomorphisms of schemes over $R$.
\end{pro}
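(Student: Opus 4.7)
The plan is to realize both maps as $\Gm$-equivariant morphisms of affine $R$-schemes with contracting $\Gm$-actions, then promote a tangent-space isomorphism at the unique fixed point of each side to a scheme isomorphism by a graded-generators argument. Introduce the twisted $\Gm$-action on $\g$ given by $t \cdot X = t^2 \Ad(\lambda(t^{-1})) X$. Since $Y \in \g(2)$, this fixes $Y$ and has strictly positive weight $2-j$ on $\g(j)$ for each $j \leq 0$, hence on $\b^-$ and on $\Xi$. The conjugation action $n \mapsto \lambda(t^{-1}) n \lambda(t)$ gives $\N^-$ a $\Gm$-action with strictly positive weight $-j$ on the tangent space $\g(j) \subset \n^-$ at the identity. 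Consequently each of $Y + \b^-$, $\mathcal S$, $\N^- \times_R \mathcal S$, and (via $\chi$) $\q$ is a contracting $\Gm$-scheme; all maps in the proposition are $\Gm$-equivariant.

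For the first map, consider the action morphism
\[
\alpha : \N^- \times_R \mathcal S \longrightarrow Y + \b^-, \quad (n, s) \longmapsto \Ad(n)\, s.
\]
Its differential at the fixed point $(1, Y)$ is the $R$-linear map $(\xi, v) \mapsto [\xi, Y] + v$ from $\n^- \oplus \Xi$ to $\b^-$. The graded form of Hypothesis \ref{hyp for Kostant section} --- namely $\g_R(j+2) = [Y, \g_R(j)] \oplus \Xi_{j+2}$ for each $j < 0$, where $\Xi_{j+2} = \Xi \cap \g_R(j+2)$ --- asserts precisely that this tangent map is a $\Gm$-graded $R$-linear isomorphism, weight by weight. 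Since both coordinate rings are $R$-polynomial algebras whose linear generators carry strictly positive $\Gm$-weights, and $\alpha^\ast$ is a $\Gm$-graded $R$-algebra map whose effect on weight-graded indecomposables reduces to the dual of this tangent map, a standard graded-generators argument shows $\alpha^\ast$ is an isomorphism. Passing to $\N^-$-invariants on the target (using the $\N^-$-equivariance of $\alpha$ for left translation on the first factor versus the adjoint action on $Y + \b^-$) identifies $R[\mathcal S]$ with $R[Y + \b^-]^{\N^-}$, giving the first map as an $R$-isomorphism.

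The second map reduces, via the first, to showing the composite $\mathcal S \to \q$ is an isomorphism. Apply the same strategy: under Hypothesis \ref{hyp for Kostant section}, the integral Chevalley restriction theorem (see \cite{Ric16}) realizes $R[\g]^\G$ as a polynomial $R$-algebra of rank $r := \rk \G$ on homogeneous basic invariants of $\Gm$-weights $2 d_1, \dots, 2 d_r$, where $d_1, \dots, d_r$ are the characteristic degrees of $\G$. On the other side, $\mathcal S = \spec R[\Xi]$ is a $\Gm$-graded polynomial $R$-algebra of rank $r$ whose linear generators also carry weights $2 d_1, \dots, 2 d_r$; this follows from the injectivity of $\ad Y$ on $\n^-$ combined with the graded structure of the centralizer $\g^Y$ established by Kostant, matching the exponents $e_i = d_i - 1$. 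The composite $R[\g]^\G \to R[\mathcal S]$ is $\Gm$-graded and, once again by the graded-generators argument, is an iso provided the induced map on weight-graded indecomposables is; this nondegeneracy can be checked by base-change to a suitable geometric fiber, where the classical Kostant theorem applies.

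The principal subtlety is the graded-generators argument itself: rigorously justifying over a general ring $R$ that a $\Gm$-equivariant $R$-morphism between affine $R$-polynomial schemes with contracting $\Gm$-actions, inducing an iso on weight-graded indecomposables at the fixed point, is itself an iso. This rests on the freeness and finiteness of the graded pieces of the coordinate rings, both of which are consequences of Hypothesis \ref{hyp for Kostant section}. A secondary subtlety, relevant only for the second map, is the appeal to the integral Chevalley restriction theorem, which is available under our hypothesis per \cite{Ric16}.
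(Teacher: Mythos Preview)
Your treatment of the first map is essentially the paper's Lemma \ref{a is injective}: the same twisted $\Gm$-action, the same tangent map $(\xi,v)\mapsto [\xi,Y]+v$, and the same reduction to a graded statement. The paper fills in the ``graded-generators argument'' concretely (injectivity via lowest-degree terms, then surjectivity by matching ranks of each $\Gm$-eigenspace and reducing to the field case), but the idea is the same.

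The second map is where your argument breaks. You invoke an integral Chevalley restriction theorem from \cite{Ric16} to conclude that $R[\g]^{\G}$ is a polynomial $R$-algebra on generators of the expected $\Gm$-weights. But this is precisely what the proposition is designed to avoid: as Remark \ref{what does Kostant section pro say} spells out, the Chevalley morphism $\t\doubleslash W \to \g\doubleslash\G$ is \emph{not} an isomorphism under Hypothesis \ref{hyp for Kostant section} (by \cite[Theorem 1.2]{CR10}). For instance, the hypothesis imposes no condition at all when $\G=\SL_p$ and $R=\ZZ_p$, yet $R[\g]^{\G}$ is not the polynomial ring you claim in that case. So your graded-generators argument for the second map has no valid input on the $\q$ side, and the appeal to a geometric fiber does not repair this: without knowing in advance that the graded pieces of $R[\g]^{\G}$ are free of the correct rank, base-changing to a field cannot lift back to an isomorphism over $R$.

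The paper gets around this by never describing $R[\g]^{\G}$ explicitly. Injectivity of $R[\g]^{\G}\to R[Y+\b^-]^{\N^-}$ is Lemma \ref{Y+b- dominant}, proved by showing $\G\times\b^-\to\g$ is universally schematically dominant and then factoring through $\t$. Surjectivity is obtained by choosing $\Xi$ defined over $\ZZ[1/N]$, reducing to $R=\ZZ[1/N]$, using the classical result over $\QQ$, and running a Gauss-lemma-type primitivity argument (with the injectivity over each $\ZZ/p\ZZ$ as input) to clear denominators.
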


\begin{remark} \label{what does Kostant section pro say}
Proposition \ref{Kostant section pro}
says that $\mathcal S$ can be viewed as a Kostant section for $\G$ over $R$.
Thus,
we have obtained a version of the Kostant section in a situation more general
than that in \cite{Ric16}. What makes this feasible is that, unlike in
\cite{Ric16}, we do not need that the Chevalley morphism
$\t \doubleslash W \rightarrow \g \doubleslash \G$ is an isomorphism
(which it is not under our less restrictive hypothesis,
by Theorem 1.2 of \cite{CR10}).
\end{remark}

The proof of the proposition is based on the following two lemmas.
\begin{lm} \label{a is injective}
The action map
$a: \N^- \times \mathcal S \rightarrow Y + \b^-$ is an isomorphism.
\end{lm}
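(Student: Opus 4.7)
The strategy is to realise $a$ as a $\Gm$-equivariant morphism of $R$-schemes between two $\Gm$-schemes that are equivariantly isomorphic to affine spaces with strictly positive weights, and then to deduce that $a$ is an isomorphism from the fact that its differential at the unique $\Gm$-fixed point on either side is bijective.

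Define the modified $\Gm$-action on $\g$ by $t \star X := t^{2}\Ad(\lambda(t)^{-1})X$, so that $\g(j)$ carries weight $2-j$.  Then $Y \in \g(2)$ is fixed, the submodule $\b^{-}$ (and hence the affine subscheme $Y + \b^{-}$) is $\Gm$-stable with only strictly positive weights, and, since $\Xi$ is $\lambda$-invariant and lies in $\b^{-}$, the section $\mathcal{S} = Y + \Xi$ is $\Gm$-stable as well.  Let $\Gm$ act on $\N^{-}$ by $t \cdot n := \lambda(t)^{-1} n \lambda(t)$:  each root subgroup $\U_{\alpha}$ with $\alpha$ negative is stable with weight $-\langle\alpha,\lambda\rangle > 0$, and the product decomposition of $\N^{-}$ over the negative root subgroups (for any fixed ordering) identifies $\N^{-}$ $\Gm$-equivariantly with an affine space over $R$ carrying strictly positive weights.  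A direct computation using $\Ad(\lambda(t))Y = t^{2}Y$ then shows that $a$ is $\Gm$-equivariant, with $(1,Y)$ and $Y$ as the unique $\Gm$-fixed points on source and target respectively.

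Under the natural tangent-space identifications at $(1, Y)$ and $Y$, namely $\n^{-}\oplus\Xi$ and $\b^{-}$, a short calculation yields
\[
da_{(1,Y)}(n,\xi) = \xi - [Y,n].
\]
Hypothesis~\ref{hyp for Kostant section} supplies $\b^{-} = [Y,\n^{-}]\oplus\Xi$, and the resulting surjection $\ad Y\colon \n^{-} \to [Y,\n^{-}]$ between free $R$-modules of the same rank is automatically an isomorphism (by taking determinants); hence $da_{(1,Y)}$ is bijective.  With both sides of $a$ identified $\Gm$-equivariantly as affine spaces over $R$ concentrated in strictly positive weights, the induced $R$-algebra homomorphism $a^{*}$ on coordinate rings is a graded map between symmetric algebras on finite free positively graded $R$-modules, and its bijectivity on the generating cotangent pieces (just established) upgrades to full bijectivity by a straightforward induction on weight.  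Thus $a$ is an isomorphism of $R$-schemes.

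The one point requiring real care is the verification that the product decomposition of $\N^{-}$ is indeed $\Gm$-equivariant, giving $\N^{-}$ the structure of a $\Gm$-equivariant affine space with strictly positive weights; once this is granted, both the equivariance of $a$ and the concluding weight induction become formal consequences of positivity.
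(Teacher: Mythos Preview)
Your proof is correct and follows essentially the same strategy as the paper: identify both sides with affine spaces, show the linearization $(n,\xi)\mapsto \xi-[Y,n]$ is an isomorphism via Hypothesis~\ref{hyp for Kostant section}, and exploit the contracting $\Gm$-action to upgrade this to a global isomorphism. The only organizational difference is that the paper separates injectivity of $a^*$ (via a lowest-polynomial-degree argument) from surjectivity (via matching ranks of $\Gm$-weight pieces and reduction to residue fields), whereas you handle both at once by the weight-triangularity induction; the underlying content is the same.
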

\begin{proof}
The proof is adapted from that of Proposition 3.2.1 of \cite{Ric16}, which was
in turn adapted from \cite[Lemma 2.1]{GG02}.
We need to show that the map at the level
of coordinate rings $a^*: R[Y + \b^-] \rightarrow
R[\N^- \times \mathcal S]$ is an isomorphism. 

We first prove that $a^*$ is injective.
By viewing $\N^-$ as the product of the schemes underlying its
root subgroupschemes, we can view the coordinate
rings of either side as
$\Sym^{\bullet}({\n^-}^{\vee} \oplus \Xi^{\vee})$
and $\Sym^{\bullet}({\b^-}^{\vee})$
(symmetric powers of free $R$-modules).
The map $a$ is thus a polynomial map in several
variables in $R$, and its linear term,
as a map $\n^- \oplus \Xi \rightarrow \b^-$, equals:
\[
a_1: (X, Z) \mapsto [X, Y] + Z \ \ (X \in \n^-, Z \in \Xi).
\]
This linear term is an isomorphism of affine spaces, thanks to
Hypothesis \ref{hyp for Kostant section}.  
It now suffices to show that any polynomial self-map of an affine space
over $R$ whose degree one term is the identity is dominant. But this
follows since the induced map at the level of coordinate rings then necessarily
preserves the terms of the lowest degree. 

Now let us prove that $a^*$ is surjective.
The map $a$ is equivariant for the following
actions of $\Gm$: it acts by $\Int \circ \lambda$ on $\N^-$
and by $t \cdot x = t^{-2} \Ad \lambda(t) (x)$ on $\mathcal S$ and
$Y + \b^-$. These actions of $\Gm$ define decompositions of
$R[\N^- \times \mathcal S]$ and $R[Y + \b^-]$ into eigenspaces.

We first show that for each $\chi \in \X^*(\Gm) = \ZZ$, the
$\chi$-eigenspaces in $R[\N^- \times \mathcal S]$ and
$R[Y + \b^-]$ are free $R$-modules of the same, finite, rank. Since 
the relevant coordinate rings are $\Gm$-equivariantly identified
with $\Sym^{\bullet}({\n^-}^{\vee} \oplus \Xi^{\vee})$
and $\Sym^{\bullet}({\b^-}^{\vee})$
(where the action on $\n^-$ is via $\Ad \lambda(t)$
and on $\Xi$ and $\b^-$ via $t^{-2} \Ad \lambda(t)$),
the assertion about eigenspaces being free modules of
finite rank follows from the fact that the $\chi$-eigenspaces
in the degree-one parts of either ring are zero for $\chi \geq 0$
(the $\Gm$-actions contract the affine spaces to our base points,
as $t \rightarrow \infty$).
Now, to prove that the ranks of the $\chi$-eigenspaces match, it is enough
to do so degree by degree (i.e., to show that for each $n$,
the $\chi$-eigenspaces of $\Sym^n({\n^-}^{\vee} \oplus \Xi^{\vee})$
and $\Sym^n({\b^-}^{\vee})$ have the same rank).
This reduces to the degree one situation, which follows from
$a_1$ being a $\Gm$-equivariant isomorphism of tangent spaces.

Now, for each $\chi \in \X^*(\Gm)$, the restriction $a^*_{\chi}$
of $a^*$ to the $\chi$-eigenspace of $R[Y + \b^-]$ is an $R$-linear map between
two free $R$-modules of the same finite rank. Thus, choosing
bases for these $\chi$-eigenspaces, it is enough to show that the square
matrix that represents this restriction has determinant that is
a unit, i.e., one that survives reduction modulo any maximal ideal of
$R$. Since $a^*$ respects base change, we are now reduced to assuming
that $R$ is a field. But in this case, $a^*_{\chi}$, being an injective
map between $R$-vector spaces of the same dimension, is also a surjection.
\end{proof}

The following lemma does not use Hypothesis \ref{hyp for Kostant section},
and hence would be valid even if $R$ were arbitrary.
\begin{lm} \label{Y+b- dominant}
The map $R[\g]^{\G} \rightarrow R[Y + \b^-]^{\N^-}$ is injective.
\end{lm}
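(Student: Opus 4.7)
I would approach this geometrically: the claim is equivalent to asserting that the morphism of $R$-schemes $\psi \colon Y + \b^- \hookrightarrow \g \to \g \doubleslash \G$ is scheme-theoretically dominant, i.e., that the coordinate ring map $R[\g]^\G \to R[Y + \b^-]$ is injective. The factorization of this map through $R[Y + \b^-]^{\N^-}$ is automatic from the fact that $\N^-$ preserves $Y + \b^-$, which in turn is a consequence of $\g(1) = 0$ (a structural observation: since $\lambda = 2\rho^\vee$ pairs to an even integer with every root, all nonzero weight spaces $\g(j)$ have even $j$, so $[\n^-, Y] \subset \b^-$).

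First I would establish dominance of $\psi$ over each algebraically closed residue field $k$ of $R$, by showing the $\G$-saturation $\G \cdot (Y + \b^-)$ is Zariski-dense in $\g_k$. I would compute the differential of the action map $\G \times (Y + \b^-) \to \g$ at $(1, Y)$ to be the $k$-linear map $(\xi, \eta) \mapsto [\xi, Y] + \eta$ from $\g \oplus \b^-$ to $\g$, and verify its surjectivity using the $\lambda$-grading, the vanishing $\g(1) = 0$, and the surjectivity of $\ad Y \colon \g(j) \to \g(j+2)$ for $j \geq 0$. The last ingredient follows in good characteristic from $\mathfrak{sl}_2$-representation theory applied to the principal triple through $Y$; in bad characteristic it must be replaced by either a dimension count (using that $Y$ is regular and $\N^-$ acts freely on a nonempty open of $Y + \b^-$, and that $\dim (Y+\b^-) = \dim \N^- + \dim \g\doubleslash\G$) or a direct companion-matrix-type construction showing $\psi_k$ is surjective on $k$-points.

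The main obstacle is transferring this field-wise dominance to injectivity of $R[\g]^\G \to R[Y + \b^-]^{\N^-}$ over general $R$, because taking $\G$-invariants does not commute with base change in bad characteristic. To handle this I would use the $\Gm$-action via $\lambda$ to decompose both sides into finite-rank graded pieces, each sitting inside a free $R$-module (namely the corresponding graded piece of $R[\g]$ or of $R[Y + \b^-]$, both polynomial rings over $R$). The graded pieces of $R[\g]^\G$ and $R[Y + \b^-]^{\N^-}$ are torsion-free submodules of free $R$-modules, and under routine flatness hypotheses one can check injectivity graded-piece-by-graded-piece after base change to each residue field, reducing to the field case handled above. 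This reduction is the delicate step of the argument.
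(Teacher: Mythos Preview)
Your approach has two genuine gaps, both of which the paper's proof sidesteps by a different route.

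\textbf{The field case in bad characteristic.} Your differential argument needs $\ad Y \colon \g(j) \to \g(j+2)$ to be surjective for $j \geq 0$. But this is precisely the condition that fails when $p$ is not \ggood/ (for $j = 0$ it says $[\t, Y]$ is the full simple-root space, which is false e.g.\ for $\SL_p$ in characteristic $p$, where the image misses the centre direction). Your proposed fallback dimension count goes the wrong way: free $\N^-$-action on an open of $Y + \b^-$ gives generic fibre dimension $\geq \dim \N^-$, hence image dimension $\leq \rk \G$, not $\geq$. To get equality you would need to know $\N^-$-orbits fill fibres, which is essentially Lemma~\ref{a is injective} and requires Hypothesis~\ref{hyp for Kostant section}. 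And a companion-matrix construction is type-$A$ specific. Since Lemma~\ref{Y+b- dominant} is asserted for \emph{all} $R$ with no goodness hypothesis, this gap is fatal.

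\textbf{The transfer to general $R$.} Knowing $k[\g]^{\G_k} \to k[Y + \b^-]$ is injective for each residue field $k$ does not give $R[\g]^{\G} \to R[Y+\b^-]$ injective by your graded-piece argument: $(R[\g]^{\G})_n \otimes k$ need not equal $(k[\g]^{\G_k})_n$ in bad characteristic, and injectivity of a map of $R$-modules is not local on fibres without flatness of the cokernel. The right tool here (which the paper uses, though for a different map) is universal schematic dominance \`a la EGA IV 11.10.9 applied to the $\G$-equivariant map $\G \times (\text{something}) \to \g$; taking $\G$-invariants then gives injectivity into $R[\text{something}]$ directly, with no base-change subtlety.

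\textbf{What the paper does instead.} The paper avoids $Y + \b^-$ entirely in the density step. It shows $\G \times \b^- \to \g$ is universally schematically dominant over $\ZZ$ (fibrewise this is the classical fact that every element lies in a Borel subalgebra, valid in all characteristics), hence $R[\g]^{\G} \hookrightarrow R[\b^-]$. Then, using the contracting $\Gm$-action $t \mapsto \lambda(t)^{-1}$ on $\b^-$, it shows that any $\G$-invariant restricted to $\b^-$ depends only on the $\t$-component; so $R[\g]^{\G} \hookrightarrow R[\t]$. The same contraction argument on $\b$ shows the restriction maps $R[\g]^{\G} \to R[\t]$ and $R[\g]^{\G} \to R[Y + \t]$ coincide (after the obvious translation), so the latter is also injective, and \emph{a fortiori} so is $R[\g]^{\G} \to R[Y + \b^-]$. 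This bypasses both of your obstacles at once: no goodness needed, no invariant-theoretic base-change issues.
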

\begin{proof}
{\em Step 1:} We first show that the restriction map $R[\g]^{\G} \rightarrow
R[\b^-]$ is injective. For this consider the conjugation
map $\G \times \b^- \rightarrow \g$. This map is $\G$-equivariant, where
we let $\G$ act on $\G \times \b^-$ by
left translation along the first factor, and on $\g$ by the adjoint action.
This map is defined over $\ZZ$, and it suffices to show that 
it is universally schematically dominant relative to $\ZZ$ (for this would give
an injection $R[\g] \rightarrow R[\G] \otimes_R R[\b^-]$
that by $\G$-equivariance maps
$R[\g]^{\G}$ to $R[\G]^{\G} \otimes_R R[\b^-] = R[\b^-]$).
Since $\bZ$ is noetherian and $\G\times \b^-$ is flat, by Theorem 11.10.9 of \cite{Gro66},
it suffices to prove that the morphism $\G \times \b^- \rightarrow \g$
of $R$-schemes is schematically dominant when $R = k$ is a field,
which we may assume to be algebraically closed.
In this case, the map $\G \times \b^- \rightarrow \g$ is
surjective at the level of $k$-points, by Proposition 14.25 of \cite{Bor91},
giving the desired dominance as $\g$ is reduced.

{\em Step 2:} Now we show that restriction map $R[\g]^{\G} \rightarrow
R[\b^-]$ factors through the map $R[\t] \rightarrow R[\b^-]$ that is dual to
the $\T$-equivariant projection $\b^- \rightarrow \t$. Thus, we need to show
that the map $R[\g]^{\G} \rightarrow R[\b^-]$ equals the composite
$R[\g]^{\G} \rightarrow R[\t] \rightarrow R[\b^-]$ (the restriction to $\t$,
and then the $\T$-equivariant projection). In other words, if $\varphi \in
R[\g]^{\G}$, $R'$ is an $R$-algebra and $X = X_0 + X_- \in \b^-(R')$ with
$X_0 \in \t(R')$ and $X_- \in \n^-(R')$, then we need to show that
$\varphi(X) = \varphi(X_0)$.

Since the weights of $\lambda^{-1}$ on $\b^-$ are all positive,
there exists a unique morphism $\iota$
from the affine line over $R'$ to $\g_{R'}$, which on $\spec R'[t, t^{-1}]$
is given by $t \mapsto \Int \lambda^{-1}(t)(X_0 + X_-)$, and takes $0$ to $X_0$.
Since $\varphi$ is $\G$-invariant, $\varphi \circ
\iota$ is a constant
on $\spec R'[t, t^{-1}]$, and hence on the entire affine line over $R'$.
This gives
\[ \varphi(X_0) = \varphi \circ \iota(0) =
\varphi \circ \iota(1) = \varphi(X), \]
as needed.

{\em Step 3:} By Steps 1 and 2, the restriction map $R[\g]^{\G} \rightarrow
R[\t]$ is injective. Applying Step 2, this time with $\b$ in place of $\b^-$,
the restriction maps $R[\g]^{\G} \rightarrow R[\t]$ and
$R[\g]^{\G} \rightarrow R[Y + \t]$ coincide, once the `translation by $Y$'
identification of $\t$ with $Y + \t$ is made. Thus, the restriction
map $R[\g]^{\G} \rightarrow R[Y + \t]$ is injective, and
{\em a fortiori}, the map $R[\g]^{\G} \rightarrow R[Y + \b^-]$ is injective.
\end{proof}

\begin{remark}
The proof of Lemma \ref{Y+b- dominant}
yields a shorter proof of the result proved in
\cite{CR10} that the Chevalley morphism
$\t \doubleslash W \rightarrow \g \doubleslash \G$
over an arbitrary scheme is dominant whenever $\G$ is
almost simple (a restriction we see is no longer necessary).
\end{remark}

We now prove Proposition \ref{Kostant section pro}.
\begin{proof}[of Proposition \ref{Kostant section pro}]
Since $R[\mathcal S]$ is free over $R$ and the map
\[ R[Y + \b^-] \rightarrow R[\N^- \times \mathcal S] =
R[\N^-] \otimes_R R[\mathcal S] \]
is $\N^-$-equivariant and an isomorphism (Lemma \ref{a is injective}),
it follows that the map
$R[Y + \b^-]^{\N^-} \rightarrow R[\mathcal S]$ is an isomorphism too.
Therefore, it is enough to show that the map
$R[\g]^{\G} \rightarrow R[Y + \b^-]^{\N^-}$, which we already
know to be an injection, is also surjective. This assertion is
independent of $\mathcal S$, so we may choose any $\mathcal S = Y + \Xi$
we like. We choose $\Xi$ to be of the form $\dot \Xi_R$
where $\dot \Xi \subset \b^-(\ZZ[1/N])$ is a $\lambda$-invariant
$\ZZ[1/N]$-submodule complementary to $[Y, \n^-(\ZZ[1/N])]$.
It suffices to show that the composite map
$R[\g]^{\G} \hookrightarrow R[Y + \b^-]^{\N^-} \rightarrow
R[\mathcal S]$ is surjective.
Considering the chain
\[
\ZZ[1/N][\g]^{\G} \otimes_{\ZZ[1/N]} R \rightarrow
R[\g]^{\G} \hookrightarrow R[Y + \b^-]^{\N^-} \rightarrow
R[\mathcal S] = \ZZ[1/N][\mathcal S] \otimes_{\ZZ[1/N]} R,
\]
it is enough to prove the assertion of the proposition for $R = \ZZ[1/N]$.
We now follow
the proof of Corollary 3.7 of \cite{CR10}.
$\CC[\g]^{\G} \rightarrow \CC[\mathcal S]$ is an isomorphism, and hence
(using faithful flatness and \cite[Lemma 2]{Ses77})
so is $\QQ[\g]^{\G} \rightarrow \QQ[\mathcal S]$. Given
$P \in \ZZ[1/N][\mathcal S]$, one can therefore find $Q \in
\QQ[\g]^{\G}$ with image $P$. Write $Q = (r/s) Q_0$, where $Q_0$
is a primitive polynomial in $\ZZ[1/N][\g]$, and $r$ and $s$ are coprime
in $\ZZ[1/N]$. Now if some prime $p$ that does not divide $N$
divides $s$, then the image of $r Q_0$ in $(\ZZ/p\ZZ)[\mathcal S]$
is zero. But this means, by the injectivity of
$(\ZZ/p\ZZ)[\g]^{\G} \hookrightarrow (\ZZ/p\ZZ)[\mathcal S]$, that
$r Q_0 = 0$ in $(\ZZ/p\ZZ)[\g]$, a contradiction.
\end{proof}

\subsection{Pinnings, regular nilpotent elements, and hyperspecial points}
\label{pinnings etc}
From now on, we assume that $\G$ is a quasi-split reductive group over $F$.
Let $E$ be a finite Galois extension of $F$ in $\Fbar$ that splits
some (hence any) maximal $F$-torus in a Borel $F$-subgroup of $\G$.
Our goal in this subsection is to recall the following maps,
both of which are equivariant under the actions of $\G(F)$:
\begin{equation} \label{pinning orbit model}
\begin{aligned}
\begin{xy}
\xymatrix@R=0pt{
&
\{
	\text{regular nilpotent elements of $\g(F)$}
\}
\\
\{
	\text{$F$-pinnings of $\G$} 
\} \qquad
\ar[ur]!L
\ar[dr]!L
&
\\
&
\{
	\text{$E$-hyperspecial points in $\mcB(\G)$}
\}
}
\end{xy}
\end{aligned}
\end{equation}
Here, by an $F$-pinning
$(\B, \T, \{X_{\alpha}\}_{\alpha \in \Delta})$ of $\G$, we mean that
$(\B,\T)$ is a Borel-torus pair for $\G$, defined
over $F$,
$\Delta = \Delta(\B,\T)$,
each $X_\alpha$ is a nonzero vector in $\g_\alpha(E)$, where
$\g_\alpha$ is the $\alpha$-eigenspace for the action of $\T_E$ on $\g_E$,
and the set $\{X_\alpha\}$ is stable under $\Gal(E/F)$.
Note that we automatically have $X_{\alpha}
\in \g(E')$ for any extension $E'$ of $F$ in $\Fbar$ that splits $\G$ (and hence
$\T$). Thus the definition of an $F$-pinning is independent of the choice of $E$.

Then the upper arrow in \eqref{pinning orbit model} 
sends an $F$-pinning $(\B, \T, \{X_{\alpha}\})$ to the element
$\sum_{\alpha \in \Delta} X_{\alpha}$ of $\g(F)$, which
is regular nilpotent (see Lemma 3.1.1 of
\cite{Ric16}). This map is clearly $\G_{\ad}(F)$-equivariant.

When $F$ has characteristic zero, it is well known that this map induces
	a bijection at the level of $\G(F)$-conjugacy classes
	(cf.\ \cite[\Sec5.1]{LS87}). Even though we do not use it, let us mention that similar methods work in general when $p$ is \ggood/ for $\G$, a notion that we will introduce in Definition \ref{df:good} below.

Now we will use our pinning $(\B,\T,\{X_\alpha\}_{\alpha\in\Delta})$
to determine a point $x\in\mcB(\G)$ that is hyperspecial over $E$.
First, our pinning can be extended to a Chevalley system.
Bruhat--Tits \cite[\Sec4]{BT2}
associates to such a system
a valuation of the root groups for $\G$ over $E$.
Thus from \cite[\Sec6.2]{BT1},
we obtain a hyperspecial point $x$ in the apartment $\mcA(\T,E)$ of $\T$
in the building $\mcB(\G,E)$,
independent of the choice of the
Chevalley system extending our fixed
pinning.
Since our pinning is invariant under $\Gal(E/F)$,
we have that $x \in \mcA(\T,E)^{\Gal(E/F)} = \mcA(\T, F) \subseteq \mcB(\G)$
(here $\mcA(\T, F)$ means $\mcA(\S,F)$,
where $\S$ is the maximal $F$-split subtorus of $\T$).
It is easy to see that $x$ is independent of the choice of $E$. Henceforth,
we will be working with our fixed pinning, but the field $E$ will not be fixed.

\subsection{The set up}
\label{set up}
We continue to assume that $\G$ is quasi-split over $F$,
and from now on fix $\B$, $\T$, $\{X_{\alpha}\}_{\alpha \in \Delta}$,
$\B^-$, $\N^-$, $\lambda$, $Y$,
and hence obtain $\g(j)$ as in Notation \ref{new lambda weights},
except that these objects are now all defined over $F$.
Starting from
$(\B, \T, \{X_{\alpha}\})$, the maps
of \eqref{pinning orbit model}
give a regular nilpotent element of $\g(F)$, which coincides with $Y$, and
a point $x \in \mcB(\G)$
that becomes hyperspecial over any finite extension $E$ of $F$
that splits $\T$.

Thus, for any such $E$,
$x$ determines a model for $\G_E$ over $\mO_E$,
which is also the $\mO_E$-model defined by
a choice of a Chevalley basis over $E$ associated to the pinning
$(\B, \T, \{X_{\alpha}\})$.

We will abuse notation by letting
$\G$ also stand for this model.
This will not create any confusion,
as $\G(R)$ will still have a well-defined meaning when
$R$ is both an $F$-algebra and an $\mO_E$-algebra.
Similarly, we will
use $\T$ to also denote its obvious model over $\mO_E$ arising from
any identification of its base change to $E$ with a product of copies of
$\Gm$ over $E$.

\subsection{Conditions on the residual characteristic} \label{assumptions on p}
We are interested in conditions on $p$ that ensure that the adjoint
action induced by $Y$ on $\g_{\kappa_E}$ is smoothly regular.
For some purposes, we only require something weaker.

\begin{df}
\label{df:good}
\begin{enumerate}[(a)]
\item
Say that $p$ is \emph{\ngood/ (for $\G$)} if
the restriction of $\ad Y$
from the $\kappa_E$-vector space $\g(\kappa_E)$
to $\n^-(\kappa_E)$ has rank equal to $\dim_F \n^-$.
Equivalently, $[Y, \n^-_{\mO_E}]$
has a $\lambda$-invariant complement in $\b^-_{\mO_E}$
of rank $\dim_F \b^- - \dim_F \n^- \, (= \rk \G)$.
\item
Say that $p$ is \emph{\gFgood/ (for $\G$)} if for every real number $r$,
the map
$\overline{\ad Y}:
\fg_{x,r}(j)/\fg_{x,r+}(j) \rightarrow \fg_{x,r}(j+2)/\fg_{x,r+}(j+2)$
induced by $\ad Y$ is injective for every $j<0$ and surjective for $j\geq 0$.
\item
Say that $p$ is \emph{\ggood/ (for $\G$)} if
$p$ is \gEgood/ for $\G$.
Equivalently,
the endomorphism $\ad Y$ on the $\kappa_E$-vector space 
$\g(\kappa_E)$ has rank equal to $\dim \G - \rk \G$.
Equivalently,
$[Y, \g_{\mO_E}]$ has a $\lambda$-invariant complement
in $\g_{\mO_E}$ of rank equal to $\rk \G$.
\end{enumerate}
\end{df}

We will see in Corollary \ref{cor:gFgood} below how the conditions
of \ggood/ and \gFgood/ compare.
In particular, they are equivalent for tamely ramified groups $\G$.

\begin{remark} \label{sufficient condition for good prime}
Whether or not $p$ is \ggood/ (or \ngood/) for $\G$
depends only on the absolute root system for $\G$
unless $\G$ has a factor of type $A_n$ or $C_n$, in which case
it depends on the absolute root datum.
\begin{enumerate}[(a)]
\item
We have that $p$ is \ngood/ if and only if
$\G$ satisfies Hypothesis \ref{hyp for Kostant section}
with $R = \mO_E$.
Therefore,
Remark \ref{exclusions in hyp for Kostant section}
describes precisely which values
of $p$ are \ngood/.
\item
From
\cite[Theorem 52]{Witte17}
(and earlier \cite[Theorem 5.9]{Spr66} in the case where $\G$
is semisimple),
$p$ is \ggood/ if and only if
\begin{itemize}
\item
$p$ is \ngood/;
\item
$p\neq 2$ (resp.\ $3$)
if $\G$ has a factor of type
$C_n$ (resp.\ $G_2$);
and
\item
$p$ does not divide the 
order of $\pi_1(\G_{\der})$
or the order (in the scheme-theoretic sense)
of the component group of the center.
\end{itemize}
Note that this last condition is superfluous unless $\G$
has a factor of type $A_n$,
and all values of $p\geq 0$ are \ggood/
for a general linear group.
\end{enumerate}
\end{remark}

Rather than assuming that $\G$ splits over a tamely ramified
extension of $F$, we will sometimes be interested in situations
where the following weaker condition is met.

\begin{df}
\label{df:esstame}
Say that $\G$ is \emph{\esstame/}
if it satisfies the following:
\begin{enumerate}[(i)]
\item
Write $\G_{\scn} = \prod_i \Res_{E_i/F} \H_i$,
where each $\H_i$ is an absolutely almost simple, simply connected
group over a finite, separable extension $E_i$ of $F$.
Then each group $\H_i$ is split over a tamely ramified extension of $E_i$.
\item
Whenever $E'$ is any extension that splits $\G$,
then
$\t_{\der}(\mO_{E'})$
has a $\Gal(E'/F)$-invariant complement in $\t(\mO_{E'})$.
\end{enumerate}
\end{df}

Even though we will not use it,
we mention in passing that it is easy to check from the proof of
Lemma \ref{Kostant section over F} below that if
$p$ is \ngood/ and
$E'/F$ can be chosen to be tamely ramified,
then part (ii) of the definition holds, hence $\G$ is \esstame/.

If $p$ is \ngood/, then $\G$ satisfies part (i) of the definition
except possibly if $p=2$ and some $\H_i$ is of type $A_n$ ($n>1$),
or if $p=3$ and some $\H_i$ is a triality form of $D_4$.
Part (ii) of course needs to be verified only over
the minimal extension of $F$ in $\Fbar$ that splits $\T$,
and is automatic when $\G$ is semisimple.

\begin{remark}
Here is an example illustrating that Condition (ii)
in Definition \ref{df:esstame} does not follow from Condition (i)
of the same definition. Let $F = \QQ_2, E = \QQ_2[\sqrt{2}]$.
Note that there is an embedding $\mu_2 \hookrightarrow \Res_{E/F} \Gm$,
which at the level of $R$-points is given by $a \mapsto 1 \otimes a \in
(E \otimes_F R)^{\times} = \Res_{E/F} \Gm(R)$. $\mu_2$ also embeds
into $\SL_2$ as its center. Let $\G = (\Res_{E/F} \Gm \times \SL_2)/\Delta(\mu_2)$,
where $\Delta$ stands for the diagonal embedding. Since $\SL_2$ is split,
Condition (i) of Definition \ref{df:esstame} is automatically satisfied.
We claim that Condition (ii) is not.
The lattice $\X_*(\Res_{E/F} \Gm)$ has a basis
$\{e_2', e_3'\}$ permuted by $\Gal(E/F)$, and let $e_1'$ denote a basis
element for the cocharacter lattice of the standard maximal torus $\T_{\scn}$ of
$\SL_2$. Then we have an identification:
\[
\X_*(\T) = \{(a, b, c) \in ((1/2) \ZZ)^3 \mid a \equiv b \equiv c \mod \ZZ\},
\]
so that $\X_*(\T)$ is spanned by $e_1, e_2, e_3$ where
$e_1 := e_1', e_2 := (1/2)(e_1' + e_2' + e_3')$ and $e_3 := e_3'$. These
also give a basis for $\t(E)_0 = \X_*(\T) \otimes_{\ZZ} \mO_E$.
We claim that $\t_{\scn}(E)_0 = \mO_E e_1$ does not have a $\Gal(E/F)$-invariant complement in
$\t(E)_0$. To see this, we pass to $\kappa_E = \kappa_F$, upon which the nontrivial
element $\sigma$ of $\Gal(E/F)$ becomes a unipotent matrix $T_{\sigma}$ that
fixes the images $\bar e_1$ and $\bar e_2$ of $e_1$ and $e_2$, and takes the
image $\bar e_3$ of $e_3$ to $\bar e_1 + \bar e_3$. We conclude that
$T_{\sigma} - 1$ has $\bar e_1$ in its image, which
would have been impossible if $\kappa \bar e_1$ had a $T_{\sigma}$-invariant complement.
\end{remark}

\subsection{Kostant Sections over $F$ and $\mO_E$} \label{Kostant sections over F OE} 
Until the end of the proof of Lemma \ref{Kostant section over F} below,
fix a finite Galois extension $E$ over which $\G$ splits.
Let $K$ be the subextension of $E$ such that
$E/K$ is totally ramified and $K/F$ is unramified.
Recall that we will sometimes view $\G$, $\B$, $\T$, etc.,
also as groups defined over
$\mO_E$, using the Chevalley basis that we have fixed in
\Sec\ref{set up}.
For every $r \in \RR$, we have $\G(F)$-invariant subsets
$\g_r$ (resp., $\g_{r+}$) in $\g(F)$, defined as the union of
Moy--Prasad filtration sublattices $\g_{y, r}$ (resp., $\g_{y, r+}$)
as $y$ ranges over $\mcB(\G)$.
These filtration sublattices are normalized as in 
\cite[\Sec2.1.2]{AD02},
or equivalently as in \cite[\Sec1.4]{AD04}, unlike in \cite{RY14}, so
that $\g_{y, r + 1} = \varpi \g_{y, r}$ for all $y \in \mcB(\G)$.
Let $e$ be the ramification degree of $E/F$, so that $[E : K] = e$.
Choose
$0 = r_0 < r_1 < \dots < r_m = 1$
such that
\begin{equation} \label{ri's}
\g_{x, r_i} \supsetneq \g_{x, r_i+} = \g_{x, r_{i + 1}},
\quad\text{for all}\quad 
0 \leq i < m.
\end{equation}

\begin{remark} \label{Recall from Ad98}
\begin{enumerate}[(i)]
\item
If $\G$ is \esstame/, then
for all $r \in \RR$, $\g_{x, r} = \g(E)_{x, re}
\cap \g(F)$ and $\g(K)_{x, r} = \g(E)_{x, re} \cap \g(K)$
(see Proposition 1.4.1 of \cite{Ad98} and Lemma 3.14 of \cite{BKV16}).
We remark in passing that this assertion does not need the full strength of
$\G$ being \esstame/:
If we write $\G_{\scn} = \prod_i \Res_{E_i/F} \H_i$ as
in Definition \ref{df:esstame}(i), then it suffices to assume that
each $\H_i$ is not a wild special unitary group (i.e., either $\H_i$ is
not a special unitary group or it splits over a tamely ramified extension). 
Since $\Gal(E/F)$ fixes the point $x$, it also preserves
$\g(E)_{x, r}$ for all $r$.
\item
Since $x$ is hyperspecial over $E$,
if $\G$ is \esstame/, then (i) gives that
for each $i$, $r_i = j_i/e$ for some $j_i \in \{0, \dots, e\}$.
\item
For all $r \in \RR$, since
$\g_{x, r} = \g(K)_{x, r} \cap \g(F)$,
\'{e}tale descent gives that the obvious map
$\g_{x, r} \otimes_{\mO} \mO_K \longrightarrow \g(K)_{x, r}$
is an isomorphism. 
Therefore, the analogue of
\eqref{ri's} over $K$ holds
with the same numbers $r_i$, so that
\begin{equation} \label{ri's over K}
\g(K)_{x, r_i} \supsetneq \g(K)_{x, r_i+} = \g(K)_{x, r_{i + 1}},
	\, \forall \, 0 \leq i < m.
\end{equation}
\end{enumerate}
\end{remark}

Recall that $Y = \sum_{\alpha \in \Delta(\B, \T)} X_{\alpha} \in \g(F)$
is regular nilpotent, and belongs to $\g(E)_{x, 0} \cap \g(F) = \g_{x, 0}$.
Assuming $\G$ to be \esstame/,
we have a map $\g(K)_{x, r_i} \longrightarrow \g(E)_{x, 0}$ given
by $X \mapsto \varpi_E^{-j_i} X$.
This induces a $\kappa_K = \kappa_E$-linear map
$$
\xi_i \colon \g(K)_{x, r_i}/\g(K)_{x, r_i+}
\longrightarrow
\g(E)_{x, 0}/ \g(E)_{x, 0+}
$$
that is equivariant for the map induced on either
side by $\ad Y$
(which makes sense since
$[\g(K)_{x, r}, \g(K)_{x, s}] \subset \g(K)_{x, r + s}$
and $[\g(E)_{x, r}, \g(E)_{x, s}]
\subset \g(E)_{x, r + s}$ for all $r, s \in \RR$).

\begin{notn}
For a subset $L \subset \g(\Fbar)$, write $L(j)$ for $L \cap \g(j)(\Fbar)$
(see Notation \ref{new lambda weights}).
\end{notn}
From Equation \eqref{new lambda weight spaces} (recall that $x$ is
hyperspecial over $E$) we get that for all $r \in \RR$,
\[ \g(E)_{x, r} = \bigoplus_{j \in \ZZ} \g(E)_{x, r}(j). \]
Since $\g_{x, r}$ is a sum of $\mO$-submodules of eigenspaces for the adjoint action of the maximal $F$-split subtorus of $\T$, and similarly
with $\g(K)_{x, r}$, we have:
\begin{equation} \label{Moy Prasad lattices weight spaces}
\g(K)_{x, r} = \bigoplus_{j \in \ZZ} \g(K)_{x, r}(j), \text{\ \ and \ \ }
\g_{x, r} = \bigoplus_{j \in \ZZ} \g_{x, r}(j).
\end{equation}

\begin{remark} \label{rmk: on gFgood}
Equation \eqref{Moy Prasad lattices weight spaces}, together with
the fact that $\varpi \g_{x, r} = \g_{x, r + 1}$ for all
$r \in \RR$, allows
us to rephrase the notion of \gFgood/ as follows.
$p$ is \gFgood/ if and only if the rank
of the $\kappa$-vector space endomorphism:
\[
\overline{\ad Y}: \bigoplus_{\substack{0 \leq r < 1 \\ \g_{x, r} \neq \g_{x, r+}}}
\g_{x, r}/\g_{x, r+} \rightarrow
\bigoplus_{\substack{0 \leq r < 1 \\ \g_{x, r} \neq \g_{x, r+}}}
\g_{x, r}/\g_{x, r+}
\]
induced by $\ad Y$ equals

\[
\dsum_{j \neq 0} \dsum_{\substack{0 \leq r < 1 \\ \g_{x, r} \neq \g_{x, r+}}}
\dim_{\kappa} \g_{x, r}(j)/\g_{x, r+}(j) = \dsum_{j \neq 0}
\dim_{\kappa} \g_{x, 0}(j)/\g_{x, 1}(j) = \dsum_{j \neq 0}
\rk_{\mO} \g_{x, 0}(j) = \dsum_{j \neq 0} \dim_F \g(F)(j), \]
which equals $\dim \G - \rk \G$.
\end{remark}

\begin{lm} \label{Kostant section over F}
If $\G$ is \esstame/, each $\xi_i$ is injective.
Moreover, if $E/F$ is tamely ramified, the map
\begin{equation} \label{xi isomorphism}
\xi :=
\bigoplus_{i = 0}^{m - 1}
\xi_i :
\bigoplus_{i = 0}^{m - 1} \g(K)_{x, r_i}/\g(K)_{x, r_i+}
\longrightarrow \g(E)_{x, 0}/ \g(E)_{x, 0+}
\end{equation}
is an isomorphism of vector spaces over $\kappa_K = \kappa_E$ that
is equivariant for the action induced on both sides
by $\ad Y$.
\end{lm}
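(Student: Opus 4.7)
The plan is to first verify that each individual $\xi_i$ is well-defined, $\ad Y$-equivariant, and injective by carefully comparing the Moy--Prasad filtrations over $K$ and over $E$ (using that $\G$ is \esstame/); then, under the extra hypothesis that $E/F$ is tame, to force the images of the different $\xi_i$'s into pairwise distinct eigenspaces for a generator of $\Gal(E/K)$ on $\g(E)_{x,0}/\g(E)_{x,0+}$; and finally to close with a dimension count.

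For the first step, I would use Remark \ref{Recall from Ad98}(i),(ii) to identify $\g(K)_{x,r_i} = \g(E)_{x,j_i} \cap \g(K)$, so that multiplication by $\varpi_E^{-j_i}$ carries $\g(K)_{x,r_i}$ into $\g(E)_{x,0}$. Similarly $\g(K)_{x,r_i+} = \g(K)_{x,r_{i+1}} \subseteq \g(E)_{x,j_{i+1}}$ by \eqref{ri's over K}, and because $j_{i+1} \geq j_i + 1$ this sublattice lands in $\g(E)_{x,1} = \g(E)_{x,0+}$, so $\xi_i$ is well defined. Conversely, if $\varpi_E^{-j_i} X \in \g(E)_{x,0+}$ for some $X \in \g(K)_{x,r_i}$, then $X \in \g(E)_{x,j_i+1} \cap \g(K) = \g(K)_{x,(j_i+1)/e}$, which equals $\g(K)_{x,r_i+}$ because the $\g(K)_{x,\cdot}$ filtration is locally constant between consecutive jumping points and $(j_i+1)/e \leq j_{i+1}/e = r_{i+1}$; hence $\xi_i$ is injective. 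Equivariance under $\ad Y$ is automatic since $\varpi_E^{-j_i}$ is a scalar that commutes with $\ad Y$.

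For the isomorphism claim, tameness of $E/F$ makes $\Gal(E/K)$ cyclic of order $e$ coprime to $p$, so the action of a generator $\sigma$ on the $\kappa_E$-vector space $\g(E)_{x,0}/\g(E)_{x,0+}$ is semisimple. Choosing $\varpi_E$ so that $\sigma(\varpi_E) = \zeta \varpi_E$ with $\zeta \in \mO_E^\times$ whose reduction $\bar\zeta \in \kappa_E^\times$ has order $e$, and using that $\sigma$ fixes every $X \in \g(K)$, I see that $\xi_i(X)$ lies in the $\bar\zeta^{-j_i}$-eigenspace of $\sigma$. Since $j_0,\dots,j_{m-1}$ are distinct elements of $\{0,1,\dots,e-1\}$, these eigenspaces are pairwise distinct, so the images of the $\xi_i$'s are linearly independent, and combined with the previous step this shows $\xi$ is injective.

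To finish, both sides have the same $\kappa_E$-dimension: the codomain is $\g(E)_{x,0}/\varpi_E \g(E)_{x,0}$, a $\kappa_E$-vector space of dimension $\dim_F \g$ since $\g(E)_{x,0}$ is free of that rank over $\mO_E$; and the domain telescopes via \eqref{ri's over K} to $\g(K)_{x,0}/\g(K)_{x,1}$, which by Remark \ref{Recall from Ad98}(iii) equals $(\g_{x,0}/\g_{x,1}) \otimes_\kappa \kappa_K$, of $\kappa_K = \kappa_E$-dimension $\dim_F \g$. Injectivity plus equal finite dimensions yields the isomorphism, with $\ad Y$-equivariance inherited. I expect the main obstacle to be the bookkeeping in the first step that matches the $j_i$'s with the correct Moy--Prasad breakpoints across $F$, $K$, and $E$ --- essentially the equality $\g(E)_{x,j_i+} \cap \g(K) = \g(K)_{x,r_i+}$ --- but once that is pinned down, the eigenspace argument and the dimension count are essentially formal.
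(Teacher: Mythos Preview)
Your proof is correct and follows essentially the same approach as the paper: injectivity via the Moy--Prasad filtration comparison through Remark \ref{Recall from Ad98}(i), linear independence of the images via the Galois action on $\g(E)_{x,0}/\g(E)_{x,0+}$, and a dimension count. The only cosmetic difference is that the paper phrases the eigenspace argument in terms of the distinctness of the characters $\sigma \mapsto (\varpi_E\,\sigma(\varpi_E)^{-1})^{j}$ of $\Gal(E/K)$ for $0 \leq j < e$, whereas you fix a generator $\sigma$ and separate the images by the eigenvalues $\bar\zeta^{-j_i}$; these are the same idea.
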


\begin{proof}
Since $\G$ is \esstame/, Remark \ref{Recall from Ad98}(i) applies.
The map
$\xi_i$ is injective because if $X \in \g(K)_{x, r_i}$
satisfies $\varpi_E^{-j_i} X \in \g(E)_{x, \varepsilon}$ for some
$\varepsilon > 0$, then
\[
X
\in \varpi_E^{j_i} \g(E)_{x, \varepsilon} \cap \g(K)
= \g(E)_{x, j_i + \varepsilon} \cap \g(K)
= \g(K)_{x, (j_i/e) + (\varepsilon/e)}
\subset \g(K)_{x, r_i +}.
\]
The domain and codomain of $\xi$
have the same dimension, namely $\dim \G$.
Now it suffices to show the linear disjointness of the images of the
$\xi_i$.
It is
easy to see that each $\sigma \in \Gal(E/K)$ induces
a $\kappa_K$-linear transformation on
$\g(E)_{x, 0}/ \g(E)_{x, 0+}$
that acts by $(\varpi_E \sigma(\varpi_E)^{-1})^{j_i}$
on the image of $\xi_i$. Since the characters $\sigma \mapsto
(\varpi_E \sigma(\varpi_E)^{-1})^{j}$ of $\Gal(E/K)$, $0 \leq j < e$,
are all distinct,
the linear disjointness of the images of the $\xi_i$
follows, and hence so does the lemma.
\end{proof}

\begin{remark}
If $E/F$ is wildly ramified, then $\xi$ need not be an isomorphism.
For example, consider the case when $\G = \Res_{\QQ_2[\sqrt{2}]/\QQ_2} \Gm$. 
\end{remark}

\begin{lm} \label{for extension tamely ramified}
Suppose $E$ is any Galois extension of $F$ splitting $\G$.
Give $\G$ an $\mO_E$-structure using our fixed pinning.
Then the following are equivalent:
\begin{enumerate}[(I)]
	\item There exists a finitely generated $\Gal(E/F)$-invariant
	$\mO_E$-submodule $L_{\mO_E} \subset \g(\mO_E)$
	such that $L_{\mO_E} = \oplus_{j \in \bZ} L_{\mO_E}(j)$
	is a $\lambda$-invariant complement in $\b^-(\mO_E)$
	for $[Y, \n^-(\mO_E)]$ of rank $\rk \G$.
	\item $\G$ is \esstame/ and $p$ is \ngood/.
\end{enumerate}
If the equivalent properties (I) and (II) are satisfied,
then for any lattice $L_{\mO_E}$ as in (I),
the following additional properties are equivalent:
	\begin{enumerate}[(I)]
		\addtocounter{enumi}{2}
		\item $\g(E)_{x, 0} = [Y, \g(E)_{x, 0}] + L_{\mO_E} + \fg(E)_{x,0+}$.
		\item $p$ is \ggood/.
	\end{enumerate}
\end{lm}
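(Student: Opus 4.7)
My plan is to reduce each equivalence to a statement over the residue field $\kappa_E$, using that $x$ is hyperspecial over $E$ (so $\g(E)_{x,0} = \g(\mO_E)$ and $\g(E)_{x,0+} = \varpi_E \g(\mO_E)$), and to control the $\Gal(E/F)$-equivariance weight-by-weight along the $\lambda$-grading.

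For (III) $\Leftrightarrow$ (IV), Nakayama's lemma reduces (III) to the $\kappa_E$-linear statement $\g(\kappa_E) = [Y, \g(\kappa_E)] + \overline{L_{\mO_E}}$. From $[Y,\g(j)]\subset\g(j+2)$ one gets $[Y,\g]\cap\g(k) = [Y,\g(k-2)]$ for each $k$, and intersecting with $\b^- = \bigoplus_{k \le 0}\g(k)$ yields
\[
[Y,\g(\kappa_E)]\cap \b^-(\kappa_E) = \bigoplus_{k\le 0}[Y,\g(k-2)(\kappa_E)] \subseteq [Y,\n^-(\kappa_E)],
\]
the containment holding because $\g(k-2)\subseteq\n^-$ whenever $k\le 0$. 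Since $\overline{L_{\mO_E}}\subseteq\b^-(\kappa_E)$ is complementary to $[Y,\n^-(\kappa_E)]$, this forces $\overline{L_{\mO_E}}\cap[Y,\g(\kappa_E)] = 0$. Hence (III) is equivalent to the dimensions summing to $\dim\G$, i.e.\ to $\dim_{\kappa_E}[Y,\g(\kappa_E)] = \dim\G - \rk\G$, which is precisely the \ggood/ condition of Definition \ref{df:good}(c).

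For (I) $\Rightarrow$ (II), the reduction of $L_{\mO_E}$ modulo $\varpi_E$ is a $\lambda$-invariant complement of rank $\rk\G$ to $[Y,\n^-(\kappa_E)]$ in $\b^-(\kappa_E)$, giving \ngood/. For \esstame/, I would analyze the weight-zero component $L_{\mO_E}(0)\subseteq\t(\mO_E)$: since differences of distinct simple roots are not roots, the Chevalley relations give $[Y,\g(-2)(\mO_E)] = \bigoplus_{\alpha\in\Delta}\mO_E H_\alpha$, and under \ngood/, which requires $|\pi_1(\G_{\der})|$ to be coprime to $p$ (Remark \ref{sufficient condition for good prime}(a)), this coincides with $\t_{\der}(\mO_E)$. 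Thus $L_{\mO_E}(0)$ is itself a $\Gal(E/F)$-invariant complement to $\t_{\der}(\mO_E)$ in $\t(\mO_E)$, yielding part (ii) of Definition \ref{df:esstame}; part (i) would be extracted from the $\Gal(E/F)$-action on the nonzero-weight summands, where a wild permutation of root spaces would obstruct the existence of integral invariant complements.

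For (II) $\Rightarrow$ (I), Remark \ref{sufficient condition for good prime}(a) identifies \ngood/ with Hypothesis \ref{hyp for Kostant section} over $\mO_E$, producing \emph{some} $\lambda$-invariant complement $L'$, which I would then promote to a $\Gal(E/F)$-invariant complement weight-by-weight. For $j\ne 0$, the space $\g(j)(\mO_E) = \bigoplus_{\langle\alpha,\lambda\rangle=j}\g_\alpha(\mO_E)$ is, via the pinning, a permutation $\mO_E[\Gal(E/F)]$-module, and part (i) of \esstame/, by confining wildness to the $\Res_{E_i/F}$-part of $\G_{\scn}$, permits an equivariant splitting of $[Y,\cdot]\colon\g(j-2)(\mO_E)\twoheadrightarrow[Y,\g(j-2)(\mO_E)]$; for $j=0$, part (ii) of \esstame/ supplies the invariant complement directly. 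The main obstacle is this wild-ramification case: one cannot simply average over $\Gal(E/F)$ when $p$ divides $|\Gal(E/F)|$, so the construction of integral invariant complements must use the structural input of \esstame/, and the reverse direction requires reading off these same structural constraints from the mere existence of $L_{\mO_E}$.
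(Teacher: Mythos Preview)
Your treatment of (III) $\Leftrightarrow$ (IV) is correct and matches the paper's argument (the invocation of Nakayama is unnecessary, since (III) is already stated modulo $\g(E)_{x,0+}$, but this is harmless). Likewise, your derivation of \ngood/ and of Condition~(ii) of \esstame/ from~(I) is correct and agrees with the paper; your explicit computation $[Y,\g(-2)(\mO_E)] = \bigoplus_{\alpha\in\Delta}\mO_E H_\alpha = \t_{\der}(\mO_E)$ is a cleaner version of what the paper does via Nakayama.

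However, the equivalence (I) $\Leftrightarrow$ (II) has two genuine gaps that you yourself flag as ``obstacles'' but do not resolve.

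\emph{First gap: (I) $\Rightarrow$ Condition~(i) of \esstame/.} Your phrase ``a wild permutation of root spaces would obstruct the existence of integral invariant complements'' is the correct intuition, but it is not a proof. The paper carries this out by explicit case analysis (its Step~3): since \ngood/ already rules out most bad primes, the only cases where Condition~(i) can fail are a factor $\H_i$ of type $A_n$ ($n\ge 2$) splitting only over a ramified quadratic with $p=2$, or a triality form of $D_4$ with $p=3$. In each case the paper singles out a specific weight space ($\g(-2n+2)$ for $A_n$, $\g(-6)$ for $D_4$), shows that over $\kappa_E$ the Galois generator acts as a nontrivial \emph{unipotent} permutation of root vectors, and checks that the one-dimensional image $[\bar Y,\g(\kappa_E)(j-2)]$ cannot have an invariant complement because that would force the Galois action to be semisimple. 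This is a concrete obstruction computation, not a general principle; your sketch does not supply it.

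\emph{Second gap: (II) $\Rightarrow$ (I).} Your plan---take an arbitrary $\lambda$-invariant complement $L'$ and ``promote'' it to a $\Gal(E/F)$-invariant one---is not the paper's approach and is not carried out. The claim that $\g(j)(\mO_E)$ is a permutation $\mO_E[\Gal(E/F)]$-module via the pinning is imprecise (the pinning only fixes simple root vectors; a Chevalley system extending it can have sign twists under Galois), and even granting a permutation structure, you have not explained how to produce the equivariant splitting in the wild case. The paper instead \emph{constructs} an invariant complement directly from $F$-rational data: in the tamely ramified case (Step~1), it chooses lifts of complements inside each Moy--Prasad quotient $\g(K)_{x,r_i}/\g(K)_{x,r_i+}$ over the maximal unramified subextension $K$, and uses the $\ad Y$-equivariant injection $\xi$ of Lemma~\ref{Kostant section over F} to transport these into $\g(E)_{x,0}/\g(E)_{x,0+}$; the resulting $\mO_E$-span is $\Gal(E/F)$-invariant by construction since it comes from $F$. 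In the general \esstame/ case (Step~2), the paper decomposes $\g(j)$ for $j\ne 0$ along $\G_{\scn} = \prod_i \Res_{E_i'/F}\H_i$, applies Step~1 to each tame $\H_i$ to get $\Gal(E/E_i')$-invariant pieces, and sums their Galois translates. This is a genuinely different and more concrete route than averaging or abstract module theory.
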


\begin{remark} \label{for extension tamely ramified base change}
The validity of Condition (I) of Lemma \ref{for extension tamely ramified}
does not depend on the choice of $E$.
To see this,
let $E \subset E'$ be an inclusion of
finite Galois extensions of $F$ in $\Fbar$ splitting $\G$.
Our pinning gives models $\G_{\mO_E}$ and
$\G_{\mO_{E'}}$ for $\G$ over $\mO_E$ and $\mO_{E'}$.
Given a lattice $L_{\mO_E}$ satisfying Condition (I),
note that $L_{\mO_E} \otimes_{\mO_E} \mO_{E'}$
satisfies the analogous condition over $E'$.
Conversely,
given a lattice $L_{\mO_{E'}}$ that satisfies the analogue of Condition (I)
over $E'$,
the lattice $L_{\mO_{E'}}^{\Gal(E'/E)}$ satisfies Condition (I).
Similarly, the validity of Condition (III)
does not depend on the choice of $E$ either.
\end{remark}

\begin{proof}[of Lemma \ref{for extension tamely ramified}]

\textit{Proof of (I) being equivalent to (II):}
Suppose for this paragraph that (I) holds. Then $p$ is \ngood/ (see the second sentence
of Definition \ref{df:good}(a)). Moreover, $p$ being \ngood/ forces
$[Y, \n^-(\kappa)] \cap \t(\kappa) = \t_{\der}(\kappa)$ by
dimension considerations, hence
$[Y, \n^-(\mO_E)] \cap \t(\mO_E) = \t_{\der}(\mO_E)$ by Nakayama's
lemma, so that $L_{\mO_E}$ furnishes a $\Gal(E/F)$-invariant complement
to $\t_{\der}(\mO_E)$
in $\t(\mO_E)$. Thanks to Remark \ref{for extension tamely ramified
base change}, this means that Condition \ref{df:esstame}(ii) holds.

Thus, we now suspend the assumption that
(I) holds, and assume these two conditions
($p$ being \ngood/ and Condition \ref{df:esstame}(ii) holding)
for the remainder of the proof, because otherwise neither (I) nor (II) is satisfied.
We present the remainder of the proof
in three steps. 

\vspace{\baselineskip}  

\textit{Step 1.  Proof in the tamely ramified case.}
Suppose $\G$ splits over a tamely ramified extension,
which may and shall be taken to be Galois over $F$.
By Remark \ref{for extension tamely ramified base change}, we
may take $E/F$ to be such an extension.

In this case, Condition \ref{df:esstame}(i) is automatic,  
and we need to construct
$L_{\mO_E}=\bigoplus_{j \leq 0}L_{\mO_E}(j)$ satisfying Property (I).

We let $L_{\mO_E}(0)$ be a $\Gal(E/F)$-equivariant complement
of $\t_{\der}(\mO_{E})$ in $\t(\mO_{E})$,
which exists by Condition \ref{df:esstame}(ii).

Next let $j < 0$.
Let $i \in \{0, \dots, m - 1\}$.
Then $\g_{x, r_i}$ and $\g_{x, r_i+}$ decompose as $\bigoplus_{\ell} \g_{x, r_i}(\ell)$
and $\bigoplus_{\ell} \g_{x, r_i+}(\ell)$, compatibly. Consider the map
\begin{equation} \label{j map}
\g_{x, r_i}(j - 2)/\g_{x, r_i+}(j - 2) \rightarrow \g_{x, r_i}(j)/\g_{x, r_i+}(j)
\end{equation} 
induced by $\ad Y$. Choose a $\kappa$-basis for a complement of the image of this map,
and lift it to a subset $A_{i, j} \subset \g_{x, r_i}(j)$.
Take $L_{\mO_E}(j)$ to be the $\mO_E$-span of $\varpi_E^{-j_i} A_{i, j}$
($i \in \{0, \dots, m - 1\}$). It is clearly a $\Gal(E/F)$-invariant,
finitely generated $\mO_E$-submodule
of $\g(E)_{x, 0}$.  From Remark \ref{Recall from Ad98}(iii),
Lemma \ref{Kostant section over F}, and
the fact that
$\dim_{\kappa_E} [Y, \g(j)(\kappa_E)] = \dim_E [Y, \g(E)(j)]$
(which follows since $p$ is \ngood/),
it follows that
$L_{\mO_E}(j)$ is indeed a complement for $[Y, \g(E)_{x, 0}(j - 2)]$
in $\g(E)_{x, 0}(j)$. 
Hence $L_{\mO_E}=\bigoplus_{j \leq 0}L_{\mO_E}(j)$ is a $\lambda$-invariant complement in
$\b^-(\mO_E)$ for $[Y, \n^-(\mO_E)]$.

\vspace{\baselineskip}  

\textit{Step 2:
Proof when Condition \ref{df:esstame}(i) is satisfied.}
We need to show the existence of
$L_{\mO_E}=\bigoplus_{j \leq 0}L_{\mO_E}(j)$ as before.
Again, we let $L_{\mO_E}(0)$ be a $\Gal(E/F)$-equivariant complement of $\t_{\der}(\mO_{E})$ in $\t(\mO_{E})$, which exists by
Condition \ref{df:esstame}(ii).

In order to define $L_{\mO_E}(j)$ for $j<0$, recall that we write $\G_{\scn}$
$= \prod_i \Res_{E_i'/F} \H_i$, where each $\H_i$ splits over a tamely ramified extension of $E_i'$ by Condition \ref{df:esstame}(i).
Thus, as a $\Gal(E/F)$-$\mO_E$-module, we have for $j \neq 0$:
\begin{equation*} 
	\fg(E)(j) \simeq
	\bigoplus_i \bigoplus_{\sigma\in\mcY_i} \sigma \cdot \fh_i(E)(j),
\end{equation*}
and
$Y=\sum\limits_{i}\sum\limits_{\sigma \in \mcY_i}\sigma \cdot Y_i$,
where
$Y_i=\sum\limits_{\alpha \in \Delta_i}X_{\alpha}$
with $\Delta_i$ being a basis of the roots of $\H_i$ compatible with $\Delta$,
and $\mcY_i$ denotes a set of representatives of
$\Gal(E/F)/\Gal(E/E_i')$.

By Step 1, for each $i$, there exists a lattice $L_{\H_i,\mO_{E'_i}}(j) \subset \fh_i(E)(j)$
that is a $\Gal(E/E'_i)$-invariant complement of $[Y_i,\fh_i(E)(j-2)]$. Define
\begin{equation*} 
	L_{\mO_E}(j):=
	\bigoplus_i \bigoplus_{\sigma\in\mcY_i}
		\sigma \cdot L_{\H_i,\mO_{E'_i}}(j).
\end{equation*}
Let $L_{\mO_E}:=\bigoplus_{j \leq 0}L_{\mO_E}(j)$.
Then $L_{\mO_E}$ is $\Gal(E/F)$-invariant and satisfies (I) by construction.

\vspace{\baselineskip}  

\textit{Step 3:
Proof when Condition \ref{df:esstame}(i) is not satisfied.}
There are two situations to treat here:
\begin{itemize}
	\item one of the $\H_i$'s is of
	type $A_n$, splitting over only a ramified quadratic extension
(in particular, $n \geq 2$), and $p = 2$; and
	\item one of the $\H_i$'s is of type $D_4$,
	on which the action of the absolute Galois group of
	$E_i'$ has order at least three, and $p = 3$.
\end{itemize}

We consider the case $\G=\H_1$ first and deduce the more general case afterwards.

Let us consider the $A_n$ case first, split over a ramified quadratic
extension $E$ over $F$.
In this case,
$n \geq 2$,
$\g({\mO_E})(-2n)$ is of rank 1,
and $\g({\mO_E})(-2n + 2)$ is of rank 2.
Further, $\g({\mO_E})(-2n + 2)$
is the direct sum of $[Y, \g({\mO_E})(-2n)]$ and another rank-one
$\mO_E$-submodule (all the assertions so far are a $\GL_n$-computation or a computation using a Chevalley system, depending on the reader's preference). Thus,
analogous results hold over $\kappa_E$, too.
The group
$\Gal(E/F)$ exchanges the root spaces spanning
$\g({\mO_E})(-2n + 2)$, as it exchanges the corresponding roots and
induces an automorphism of $\g(E)_{x, 0}$. Thus,
we find that the $\kappa_E$-vector
space $\g({\kappa_E})(-2n + 2)$ is spanned by two vectors $e_1, e_2$, which
are exchanged by the $\kappa_E$-linear automorphism $T_{\sigma}$ induced by
the unique nontrivial element $\sigma \in \Gal(E/F)$.
Since $p = 2$,
$T_{\sigma}$ is unipotent.
If $[Y, \g({\mO_E})(-2n)]$ had a $\Gal(E/F)$-invariant complement in
$\g({\mO_E})(-2n + 2)$, then $[\bar Y, \g({\kappa_E})(-2n)]$ would have
a $T_{\sigma}$-invariant complement in $\g({\kappa_E})(-2n + 2)$.
But $[\bar Y, \g({\kappa_E})(-2n)]$ is a one-dimensional $\kappa_E$-vector
space
(since $p$ is \ngood/, it is enough
to check the analogous result for a general linear group over $\CC$).
Thus, the existence of a $T_{\sigma}$-invariant complement would 
imply that
$T_{\sigma}$ was a nontrivial semisimple automorphism of
$\g({\kappa_E})(-2n + 2)$, a contradiction.

Now we consider the $D_4$ case. Here the argument is somewhat similar.
We consider $[Y, \g({\mO_E})(-8)] \subset \g({\mO_E})(-6)$.
If $\alpha_1$, $\alpha_2$, $\alpha_3$,
and $\alpha_4$ are the simple roots of $D_4$, with $\alpha_2$ the unique
root fixed by $\Gal(\Fbar/F)$, then
$\g({\mO_E})(-8)$ is spanned by the root space
corresponding to
$-(\alpha_1 + \alpha_2 + \alpha_3 + \alpha_4)$, while $\g({\mO_E})(-6)$ is spanned by the root
spaces corresponding to the $-(\alpha_1 + \alpha_2 + \alpha_3 + \alpha_4) +
\alpha_i$ as $i$ ranges over $1, 3$ and $4$. $[Y, \g({\mO_E})(-8)]$ is
a rank one $\mO_E$-submodule of $\g({\mO_E})(-6)$, admitting
a complementary rank two $\mO_E$-submodule (use that $p$ is \ngood/).
As before, we
have an automorphism $T_{\sigma}$ of $\g({\kappa_E})(-6)$, which
permutes three basis vectors in a three cycle and hence can be checked
to have minimal polynomial $(T - 1)^3$ (thanks to $p = 3$). But
if the image of $[Y, \g({\mO_E})(-8)]$ in $\g({\kappa_E})$ had a 
$T_{\sigma}$-invariant complement in $\g({\kappa_E})(-6)$, then
the minimal polynomial of $T_{\sigma}$ on $\g({\kappa_E})(-6)$ would
have to be $(T - 1)^2$, a contradiction.

Now drop our assumption that $\G = \H_1$, and write
$\G_{\scn}=\prod_i \Res_{E_i'/F} \H_i$, where
$\H_1$ is of one of the two types mentioned at the beginning of
Step 3, and $p=2$ or $p=3$ as appropriate.
Let $j=-2n+2$ if $p=2$, and $j=-6$ if $p=3$.
To get a contradiction, 
assume that for some splitting field $E$ of $F$, $[Y, \n^-(\mO_E)]$ has a
$\Gal(E/F)$-invariant $\lambda$-invariant complement in $\b^-(\mO_E)$,
i.e., in particular,  $[Y, \fg(\mO_E)(j-2)]$ has a
$\Gal(E/F)$-invariant complement in $\fg(\mO_E)(j)$.
Recall that
$\fg(\mO_E)(j)
\simeq
\bigoplus_i \bigoplus_{\mcY_i} \sigma \cdot \fh_i(\mO_E)(j)$,
and $\Gal(E/F)$ preserves each summand
$\bigoplus_{\mcY_i} \sigma \cdot \fh_i(\mO_E)(j)$.
Hence, using the notation from Step 2,
we obtain a $\Gal(E/F)$-invariant complement of the image of $\ad(\sum\limits_{\sigma \in \mcY_i}\sigma \cdot Y_i)$
in $\bigoplus_{\mcY_i} \sigma \cdot \fh_i(\mO_E)(j)$.
By restriction to $\fh_1(\mO_E)(j)$
we obtain a $\Gal(E/E_1')$-invariant complement to $[Y_1,\fh_1(\mO_E)(j-2)]$
in $\fh_1(\mO_E)(j)$.
This yields a contradiction by the above-treated absolutely almost simple cases.

\vspace{\baselineskip}  

\textit{Proof of (III) being equivalent to (IV), if (I) and (II) hold:}
First note that given a module $L_{\mO_E}$
satisfying Property (III) of Lemma \ref{for extension tamely ramified},
we may look at the image of either side of the equality of (III)
inside $\g(E)_{x, 0}/\g(E)_{x, 0+} = \g(\kappa_E)$ to get
\[ \dim_{\kappa_E} \g(\kappa_E) - \dim_{\kappa_E} [Y, \g(\kappa_E)] =
\rk_{\mO_E} L_{\mO_E} = \rk \G, \]
so that $p$ is \ggood/.
Thus we now assume that $p$ is \ggood/.
As the image $\overline L_{\mO_E}$ of $L_{\mO_E}$ in
$\fg(E)_{x,0}/\fg(E)_{x,0+}\simeq \fg(\kappa_E)$
is a complement in $\b^-(\kappa_E)$
for $[\overline Y,\n^-(\kappa_E)]$, we deduce
(by Definition \ref{df:good} and dimension counting)
that $\overline L_{\mO_E}$
is a complement in $\fg(\kappa_E)$
for $[\overline Y,\fg(\kappa_E)]$,
and hence Property (III) follows.
\end{proof}

\begin{cor}\label{cor:gFgood}
	Let $\G$ be \esstame/. If $p$ is \ggood/ and $\ov{\ad Y}:\fg_{x,r}(0)/\fg_{x,r+}(0) \rightarrow \fg_{x,r}(2)/\fg_{x,r+}(2)$ is surjective for all $r \in \mathbb R$, then $p$ is \gFgood/.

	Moreover, if $\G$ is tamely ramified, then $p$ is \gFgood/ if and only if $p$ is \ggood/.
	
\end{cor}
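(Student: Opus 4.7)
The plan is to first establish the tamely ramified case (the ``moreover'' statement) using the integral Kostant-section isomorphism of Lemma \ref{Kostant section over F}, and then reduce the general \esstame/ case (the main statement) to the tame case factor by factor, via the Weil-restriction decomposition of $\G_{\scn}$.

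\textbf{Tamely ramified case.} The key tool is the $\ad Y$-equivariant isomorphism
\[
\xi \colon \bigoplus_{i=0}^{m-1} \g(K)_{x,r_i}/\g(K)_{x,r_i+} \xrightarrow{\sim} \g(\kappa_E)
\]
of Lemma \ref{Kostant section over F}, which exhibits $\g(\kappa_E)$ as a block decomposition preserved by $\ad \bar Y$. For the direction \ggood/ $\Rightarrow$ \gFgood/, I would use two facts about $\g(\kappa_E)$: first, $p$ being \ngood/ (a consequence of \ggood/) forces $\ad \bar Y \colon \g(\kappa_E)(j) \to \g(\kappa_E)(j+2)$ to be injective for every $j < 0$, via Definition \ref{df:good}(a) applied to the weight decomposition of $\n^-$; second, Lemma \ref{for extension tamely ramified}(I)--(IV) identifies the cokernel of $\ad \bar Y$ on $\g(\kappa_E)$ with $\ov{L_{\mO_E}} \subset \b^-(\kappa_E)$, which lies in weights $\leq 0$, giving surjectivity for $j \geq 0$. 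Block-diagonality then transports both properties to each summand $\g(K)_{x,r_i}(j)/\g(K)_{x,r_i+}(j)$, and Remark \ref{Recall from Ad98}(iii) combined with faithfully flat descent along $\mO \to \mO_K$ brings them down to $F$. Conversely, Remark \ref{rmk: on gFgood} reformulates \gFgood/ as the identity $\rk \overline{\ad Y} = \dim \G - \rk \G$ on $\bigoplus_r \g_{x,r}/\g_{x,r+}$; base-changing to $\kappa_K = \kappa_E$ and reassembling via $\xi$, this becomes the rank identity for $\ad \bar Y$ on $\g(\kappa_E)$, which is \ggood/.

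\textbf{General \esstame/ case.} Using Condition (i) of Definition \ref{df:esstame}, write $\G_{\scn} = \prod_i \Res_{E_i/F} \H_i$ with each $\H_i$ absolutely almost simple and tamely ramified over $E_i$. Step 2 of the proof of Lemma \ref{for extension tamely ramified} provides, for every $j \neq 0$, a $\Gal(E/F)$-equivariant decomposition
\[
\g(E)(j) = \bigoplus_i \bigoplus_{\sigma \in \mcY_i} \sigma \cdot \fh_i(E)(j), \qquad Y = \sum_{i,\sigma} \sigma \cdot Y_i.
\]
Compatibility of Moy--Prasad filtrations with products and Weil restrictions, combined with Remark \ref{Recall from Ad98}(i), extends this to a direct-sum decomposition of $\g_{x,r}(j)$ indexed by the factors $\H_i$, on which $\overline{\ad Y}$ acts block-diagonally, each block being the analogous map for the tamely ramified group $\H_i$ (which inherits \ggood/ness from $\G$). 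Hence for every $j$ with $j \neq 0$ and $j+2 \neq 0$, the desired injectivity (for $j<0$) or surjectivity (for $j>0$) follows from the tame case applied to each $\H_i$. The case $j = 0$ is precisely the additional hypothesis. For $j = -2$, although the target $\t = \g(0)$ need not decompose cleanly along the factor structure (the center $\Z^0$ can entangle with $\t_{\der}$), the image of $\ad Y$ on $\g(-2)$ is already contained in $\t_{\der}$, which does decompose as $\bigoplus_{i,\sigma} \sigma \cdot \t_{\H_i}$; thus $\ad Y$ factors as a block-diagonal map $\g(-2) \to \t_{\der} \hookrightarrow \t$, and injectivity again reduces to each $\H_i$ by the tame case.

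\textbf{Main obstacle.} The principal technical point is ensuring that Moy--Prasad filtrations on $\g_{x,r}(j)$ decompose in accordance with the Weil-restriction structure for $j \neq 0$; this rests on the standard identification $\mcB(\Res_{E_i/F}\H_i, F) = \mcB(\H_i, E_i)$ together with care about normalizations of the filtration index across ramified base changes. The subtlety at $j = -2$ is that the full torus $\t$ can fail to decompose along the factor structure, but confining attention to the image of $\ad Y$ inside $\t_{\der}$ sidesteps this and restores the block structure needed to invoke the tame case on each $\H_i$.
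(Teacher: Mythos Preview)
Your proposal is correct, and for the tamely ramified case as well as the surjectivity half of the general \esstame/ case it matches the paper's argument essentially line for line. The one genuine difference is how you establish \emph{injectivity} for $j<0$ in the general case. The paper does not use the Weil-restriction decomposition there at all: it instead exploits the first assertion of Lemma~\ref{Kostant section over F}, namely that each $\xi_i$ is injective as soon as $\G$ is \esstame/ (only the bijectivity of the full $\xi$ requires $E/F$ tame). Composing with the inclusion $\g_{x,r}/\g_{x,r+}\hookrightarrow \g(K)_{x,r}/\g(K)_{x,r+}$ from Remark~\ref{Recall from Ad98}(iii) gives a $\lambda$-graded, $\overline{\ad Y}$-equivariant embedding of $\g_{x,r}/\g_{x,r+}$ into $\g(\kappa_E)$, and injectivity of $\overline{\ad Y}$ on $\g(\kappa_E)(j)$ for $j<0$ (which is immediate from $p$ being \ggood/) then pulls back directly. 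This handles all $j<0$ uniformly and bypasses the special case $j=-2$ altogether.

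Your route via the factor decomposition also works, but the detour through $\t_{\der}$ at $j=-2$ tacitly assumes that the induced filtration $\t_{\der}(F)\cap\g_{x,r}$ decomposes along the factors $\H_i$. This is true---under \ggood/ one has $p\nmid|\pi_1(\G_{\der})|$, so the isogeny $\G_{\scn}\to\G_{\der}$ induces matching Moy--Prasad filtrations and the product structure on $\g_{\scn}$ transports over---but it is an extra verification you would need to supply. The paper's embedding argument is shorter precisely because it avoids this check.
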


\begin{proof}
If $E/F$ is tame,
then the claim follows from Lemma \ref{Kostant section over F}
(whose isomorphism preserves the $\Ad \circ \lambda$-eigenspaces)
and the fact that the map
$\g_{x, r}/\g_{x, r+} \rightarrow (\g(K)_{x, r}/\g(K)_{x, r+})^{\Gal(K/F)}$
is an isomorphism.
	
Suppose now that $p$ is \ggood/.
Then we have for $j<0$ that $\ov{\ad Y}$ sends
$\fg(E)_{x,0}(j)/\g(E)_{x,0+}(j)$ injectively to
$\fg(E)_{x,0}(j+2)/\g(E)_{x,0+}(j+2)$.
Using the injection $\xi_i$ from Lemma \ref{Kostant section over F}, we can embed $\g_{x,r}/\g_{x,r+}$
$\ov{\ad Y}$-equivariantly into $\g(E)_{x,0}/\g(E)_{x,0+}$, and this embedding preserves the decomposition into $\Ad \circ \lambda$-eigenspaces.
Hence the action $\ov{\ad Y}$
is injective on $\g_{x,r}(j)/\g_{x,r+}(j)$ for $j<0$.
Next we need to show that $\ov{\ad Y}$ maps $\fg_{x,r}(j)/\g_{x,r+}(j)$ surjectively onto $\fg_{x,r}(j+2)/\g_{x,r+}(j+2)$ for $j>0$.
To see this,
we use the notation from Step 2 of the proof of
Lemma \ref{for extension tamely ramified},
and let $\sum_{\sigma \in \mcY_i} \sigma \cdot X$
be an element of $\fg_{x,r}(j+2)$ ($i$ fixed),
where $X \in \fh_i(E_i')_{x_i,re_i'}(j+2) \subset \fg(E)_{x,re}(j+2)$,
where $e_i'$ is the ramification degree of $E_i'/F$,
and $x_i$ is the projection of $x$ into the building of $\H_i(E_i')$.
By the equivalence of \ggood/ and \gFgood/
for the tamely ramified group $\H_i$,
there exists $X'$ in $\fh_i(E_i')_{x_i,re_i'}(j)$
such that
$X+\fh_i(E_i')_{x_i,re_i'+}(j+2)=[Y_i,X']+\fh_i(E_i')_{x_i,re_i'+}(j+2)$,
and hence
$\sum_{\mcY_i} \sigma \cdot X+\fg_{x,r+}(j+2)=[Y,\sum_{\mcY_i} \sigma \cdot X']+\fg_{x,r+}(j+2)$.
As $\fg_{x,r}/\fg_{x,r+}$ is spanned by elements of the form $\sum_{\mcY_i} \sigma \cdot X + \fg_{x,r+}$, we deduce the claim.
Thus, if $\ov{\ad Y}:\fg_{x,r}(0)/\fg_{x,r+}(0) \rightarrow \fg_{x,r}(2)/\fg_{x,r+}(2)$ is surjective, we obtain that $p$ is \gFgood/, as desired.
\end{proof}

\begin{remark} \label{adjoint gFgood}
In Corollary \ref{cor:gFgood},
the condition that
$\overline{\ad Y}:
	\g_{x, r}(0)/\g_{x, r+}(0) \rightarrow \g_{x, r}(2)/\g_{x, r+}(2)$
be surjective is automatic if $\G$ is semisimple,
as we will explain below.
In fact, we do not need to assume that $\G$ is semisimple.
Instead, consider the identity component $\Z^0$ of the center of $\G$,
which is canonically an $\mO_E$-torus and hence gives its Lie algebra
$\z$ an $\mO_E$-scheme structure $\z_{\mO_E}$.
Let us assume that $\z_{\mO_E}(\mO_E)$ has an $\mO_E[\Gal(E/F)]$-invariant complement
$\s$ in $\t(\mO_E)$.
Of course, this is trivially satisfied if $\G$ is semisimple.

If we can show that the morphism
$\ad Y : \s \rightarrow \g(2)(\mO_E)$ of $\mO_E[\Gal(E/F)]$-modules
is an isomorphism, then, 
multiplying by
$\varpi_E^j$ for any given $j$ and taking $\Gal(E/F)$-invariants,
we would get from Remark \ref{Recall from Ad98}(i) that
$\ad Y(\g_{x, j/e}(0)) \supset \ad Y((\varpi_E^j \s)^{\Gal(E/F)}) = \g_{x, j/e}(2)$.
Since $j$ is arbitrary,
and since $\g_{x, r} = \g_{x, \lceil re \rceil /e}$ for all
$r$
(use Remark \ref{Recall from Ad98}(i) and that
$\g(E)_{x, re} = \g(E)_{x, \lceil re \rceil}$
because $x$ is hyperspecial over $E$),
it would then follow
that for all $r$,
$\overline{\ad Y}:
\g_{x, r}(0)/\g_{x, r+}(0) \rightarrow \g_{x, r}(2)/\g_{x, r+}(2)$
is surjective.
Thus, it suffices to show that the morphism
$\ad Y : \s \rightarrow \g(2)(\mO_E)$ of $\mO_E$-modules
is an isomorphism, or equivalently, surjective (since
$\rk_{\mO_E} \s = \rk_{\mO_E} \g(2)(\mO_E)$).
But this follows since the map $\ad Y : \g(0)(\mO_E) \rightarrow
\g(2)(\mO_E)$ is surjective ($p$ being \ggood/), with kernel
containing $\z_{\mO_E}(\mO_E)$.

Thus, if $\G$ is \esstame/ and semisimple (or, more generally,
$\z$ has a suitable complement in $\t$),
and $p$ is \ggood/, then $p$ is \gFgood/.
\end{remark}

\begin{remark} 
We remark that $p$ can be
\gFgood/ without being \ggood/. This occurs, for example, if $\G =
\Res_{E/F} (\SL_2 \times \Gm)/ \mu_2$ with $E =
\QQ_2[\sqrt{2}]$, $F = \QQ_2$, and where the embedding
$\mu_2 \hookrightarrow \Res_{E/F} (\SL_2 \times \Gm)$  is
obtained by composing the diagonal embedding
$\mu_2 \hookrightarrow \SL_2 \times \Gm$ with the `diagonal' inclusion
$\SL_2 \times \Gm \hookrightarrow \Res_{E/F}(\SL_2 \times \Gm)$
(which at the level of $R$-points, for an $F$-algebra $R$, corresponds to the inclusion
$(\SL_2 \times \Gm)(R) \hookrightarrow (\SL_2 \times \Gm)(E \otimes_F R)$
induced by the $R$-algebra embedding $R \hookrightarrow E \otimes_F R, r \mapsto 1 \otimes r$).
\end{remark}

\begin{cor} \label{extending from tamely ramified}
	Suppose that $\G$ is \esstame/ and $p$ is \ngood/.
	There exists a subspace $L_F \subset \b^-(F)$ such that:
	\begin{enumerate}[(i)]
		\item 
		$L_F$ is a $\lambda$-invariant complement in $\b^-(F)$ for $[Y, \n^-(F)]$. Hence $Y + L_F$ is a section for $\g \rightarrow \g \doubleslash \G$ over $F$.
		\item The set of topologically nilpotent elements in $Y + L_F$ equals
		$Y + L_{0+}$, with $L_{0+} := L_F \cap \g_{x, 0+}$.
	\end{enumerate}
	If we assume that $p$ is \gFgood/, then we can choose $L_F$ such that
	\begin{enumerate}[(i)]
		\addtocounter{enumi}{2}
		\item
		$\g_{x, r} = [Y, \g_{x, r}] + L_r + \fg_{x,r+}$
		for all $r \in \RR$, where $L_r := L_F \cap \g_{x, r}$.
	\end{enumerate}
\end{cor}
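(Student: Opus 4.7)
The plan is to deduce the corollary from Lemma \ref{for extension tamely ramified} and Proposition \ref{Kostant section pro} via Galois descent along a finite Galois extension $E/F$ that splits $\G$. Since $\G$ is \esstame/ and $p$ is \ngood/, Lemma \ref{for extension tamely ramified} supplies a $\Gal(E/F)$-invariant, $\lambda$-invariant $\mO_E$-submodule $L_{\mO_E} \subset \b^-(\mO_E)$ complementary to $[Y,\n^-(\mO_E)]$. Set $L_E := L_{\mO_E} \otimes_{\mO_E} E$ and $L_F := L_E^{\Gal(E/F)}$. The decomposition $\b^-(E) = L_E \oplus [Y,\n^-(E)]$ is both $\Gal(E/F)$- and $\lambda$-stable, so taking Galois invariants immediately yields~(i).

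For (ii), note that by Remark \ref{sufficient condition for good prime} the \ngood/ hypothesis is precisely Hypothesis \ref{hyp for Kostant section} over $R = \mO_E$; hence Proposition \ref{Kostant section pro} gives an isomorphism of $\mO_E$-schemes $\chi\colon Y + L_{\mO_E} \xrightarrow{\sim} \q_{\mO_E}$. Since $Y$ is nilpotent, $\chi(Y) = \chi(0)$, and under this isomorphism the subset $Y + \varpi_E L_{\mO_E}$ corresponds exactly to the fiber over the reduction $\chi(0) \in \q(\kappa_E)$. Base-changing to $E$ gives a bijection $Y + L_E \to \q(E)$, so $F$-rational preimages are unique; combined with Lemma \ref{unramified tn}, an element $Y + Z \in Y + L_F$ is topologically nilpotent iff $Z \in L_F \cap \varpi_E L_{\mO_E}$. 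I would then observe that $L_{\mO_E}$ is an $\mO_E$-direct summand of $\g(\mO_E) = \b^-(\mO_E) \oplus \n(\mO_E)$, so $\varpi_E L_{\mO_E} = L_{\mO_E} \cap \varpi_E \g(\mO_E)$, whence
\[
L_F \cap \varpi_E L_{\mO_E}
= L_F \cap \varpi_E \g(\mO_E)
= L_F \cap \g(E)_{x, 0+}
= L_F \cap \g_{x, 0+}
= L_{0+},
\]
the last two equalities using $L_F \subset \g(F)$ together with Remark \ref{Recall from Ad98}(i). This establishes~(ii).

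For (iii), assume additionally that $p$ is \gFgood/. Since $L_F \subset \b^-$, $L_r(j) = 0$ for $j > 0$; as $\g(j) = 0$ for odd $j$ (pairings with $\lambda = 2\rho^\vee$ are even), the only nontrivial positive weights are $j \geq 2$, where \gFgood/ applied at $j - 2 \geq 0$ gives the surjectivity of $\overline{\ad Y}\colon \g_{x,r}(j-2)/\g_{x,r+}(j-2) \to \g_{x,r}(j)/\g_{x,r+}(j)$, hence $\g_{x,r}(j) = [Y, \g_{x,r}(j-2)] + \g_{x,r+}(j)$. For $j \leq 0$, the construction of $L_{\mO_E}$ furnishes a direct sum $\g(\mO_E)(j) = L_{\mO_E}(j) \oplus [Y, \g(\mO_E)(j-2)]$; scaling by $\varpi_E^{\lceil re \rceil}$ gives the analogous decomposition of $\g(E)_{x, re}(j)$, and taking $\Gal(E/F)$-invariants (using the $\Gal(E/F)$-equivariant injectivity of $\ad Y$ on $\g(E)(j-2)$, a consequence of \ngood/ since $j - 2 < 0$) yields $\g_{x,r}(j) = L_r(j) \oplus [Y, \g_{x,r}(j-2)]$, which is stronger than required.

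The main obstacle is the chain of identifications in (ii) culminating in $L_F \cap \varpi_E L_{\mO_E} = L_{0+}$: this is the step that critically exploits both the $\mO_E$-direct summand structure of $L_{\mO_E}$ in $\g(\mO_E)$ and the \esstame/-dependent comparison between $F$- and $E$-rational Moy--Prasad lattices in Remark \ref{Recall from Ad98}(i), and it is where the interplay between the integral Kostant section over $E$ and topological nilpotence over $F$ must be balanced carefully.
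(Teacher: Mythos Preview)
Your proof is correct. Parts (i) and (ii) follow the paper's route exactly, with your version of (ii) simply spelling out the chain of identifications that the paper compresses into a single sentence invoking Lemma~\ref{unramified tn} and Remark~\ref{Recall from Ad98}(i).

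For (iii) your argument is genuinely different from the paper's. The paper proves (iii) by a global dimension count: it observes the containment $\g_{x,r} \supseteq [Y,\g_{x,r}] + L_r + \g_{x,r+}$, notes that the images of $[Y,\g_{x,r}]$ and $L_r$ in $\g_{x,r}/\g_{x,r+}$ intersect trivially, and then invokes Remark~\ref{rmk: on gFgood} to compute
\[
\sum_{i}\bigl(\dim_\kappa \g_{x,r_i}/\g_{x,r_i+} - \dim_\kappa [Y,\g_{x,r_i}/\g_{x,r_i+}]\bigr)
= \dim \t = \dim_F L_F
= \sum_i \dim_\kappa (L_{r_i}+\g_{x,r_i+})/\g_{x,r_i+},
\]
forcing equality at every $r$. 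You instead argue weight by weight: for $j \geq 2$ you use directly the surjectivity half of \gFgood/, while for $j \leq 0$ you descend the integral decomposition $\g(\mO_E)(j) = L_{\mO_E}(j) \oplus [Y,\g(\mO_E)(j-2)]$ through $\Gal(E/F)$, with the injectivity of $\ad Y$ on $\n^-(E)$ (a consequence of \ngood/ alone) guaranteeing that Galois invariants of $[Y,\,\cdot\,]$ are again of that form. Your route is slightly more structural --- it actually yields the sharper statement $\g_{x,r}(j) = L_r(j) \oplus [Y,\g_{x,r}(j-2)]$ for $j \leq 0$ without any appeal to \gFgood/ --- whereas the paper's dimension count is shorter and avoids the need to separately track Galois invariants of the two summands.
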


\begin{proof}
	Assuming $\G$ is \esstame/ and $p$ is \ngood/
	puts at our disposal a lattice $L_{\mO_E}$ satisfying (I)
	of Lemma \ref{for extension tamely ramified}.
	The $E$-span of $L_{\mO_E}$ gets an
	$F$-structure $L_F$ by Galois descent
	(see \cite[Proposition 11.1.4]{Spr98}),
	and Condition (i) is easy to check. Since $L_{\mO_E}$ satisfies condition
	(I) of Lemma \ref{for extension
		tamely ramified}, Proposition \ref{Kostant section pro} gives
	that $Y + L_{\mO_E} \rightarrow \g \doubleslash \G$ is an isomorphism
	of schemes over $\mO_E$ (where we use the $\mO_E$-structure arising from our
	fixed pinning). Therefore, Condition (ii) then follows from Lemma
	\ref{unramified tn}
	and Remark \ref{Recall from Ad98}(i), which applies as $\G$ is \esstame/.
	
	Suppose now that $p$ is \gFgood/.  We work with the $L_F$ constructed
above.
	We have $\g_{x, r} \supset [Y, \g_{x, r}] + L_r + \fg_{x,r+}$,
and hence $\dim_{\kappa} \g_{x,r}/\g_{x,r+} \geq  \dim_\kappa [Y, \g_{x, r}]/\g_{x,r+} + \dim_\kappa (L_r+\fg_{x,r+})/\fg_{x,r+}$ for all $r$,
because $[Y, \g_{x, r}]/\g_{x,r+}$ and $(L_r+\fg_{x,r+})/\fg_{x,r+}$ have trivial intersection 
by our construction of $L_F$. Since $p$ is
\gFgood/ we have (see Remark \ref{rmk: on gFgood}):
	\begin{eqnarray*}
		\sum_{i=1}^m \left(\dim_\kappa \g_{x, r_i}/\g_{x,r_i+} - \dim_\kappa [Y, \g_{x, r_i}/\g_{x,r_i+}] \right)&=&\sum_{i=1}^m \dim_\kappa \g_{x, r_i}(0)/\g_{x,r_i+}(0) \\
&=& \dim \t = \dim_F L_F \\
		& = & \sum_{i=1}^m \dim_\kappa (L_{r_i}+\fg_{x,r_i+})/\fg_{x,r_i+}.
	\end{eqnarray*}
Hence $\dim_{\kappa} \g_{x,r}/\g_{x,r+} =  \dim_\kappa [Y, \g_{x, r}/\g_{x,r+}] + \dim_\kappa (L_r+\fg_{x,r+})/\fg_{x,r+}$ and therefore $\g_{x, r} = [Y, \g_{x, r}] + L_r + \fg_{x,r+}$ for all $r \in \mathbb R$.
\end{proof}

\begin{notn}
Given $L_F$ as in
Corollary \ref{extending from tamely ramified},
and any $r \in \RR$, we will write $L_r$
for $L_F \cap \g_{x, r}$ and $L_{r+}$ for $L_F \cap \g_{x, r+}$,
continuing with (and slightly extending)
the notation introduced in (ii) and (iii)
of the corollary.
\end{notn}

\subsection{Relating $Y + \g_{x, r}$ with a Kostant Section} \label{relating with Kostant section}

If $K = E$, so that $\G$ is unramified, then $x$ is a hyperspecial
point of $\G$, realizing $\G$ as a reductive group over $\mO$.
In such a situation, we supply $\q$ with the $\mO$-structure
determined by the containment $\mO[\g]^{\G} \subset
F[\g]^{\G}$, and write $\q(m)$ for
the preimage of $\chi(0) \in \q(\mO/\varpi^m \mO)$
under the obvious map
$\q(\mO) \longrightarrow \q(\mO/\varpi^m \mO)$
(see Notation \ref{adjoint quotient notation}).

\begin{df}
\label{df:blah}
Say that $\G$ satisfies \emph{\blahblah/}
if there is a tame extension $E'/F$
such that
$\G_{E'}$ contains a maximal torus that is induced.
\end{df}

We've adapted terminology from \cite{Yu02}.

\begin{lm} \label{AD04 for unramified}
Assume that $\G$ is \esstame/
and satisfies \blahblah/.
\begin{enumerate}[(i)]
\item
Suppose $p$ is \gFgood/, so that we may and do choose $L_F$ as in
Corollary \ref{extending from tamely ramified}.
For $r > 0$, $Y + \g_{x, r} = \,^{\G_{x, r}}(Y + L_r)$ (where, for $J \subset \G(F)$
and $\Omega \subset \g(F)$, we write $\,^J\Omega$ for $\Ad(J)(\Omega)$);
\item In the situation of (i), suppose
$K = E$, so that $\G$ is unramified. Then for $m \in \NN$ we have
\[ \chi(Y + \g(F)_{x, m}) = \q(m). \]
\end{enumerate}
\end{lm}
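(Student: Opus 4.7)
The plan is to prove (i) by an iterative conjugation argument using the mock exponential map---whose availability is ensured by \blahblah/---together with the decomposition supplied by Corollary \ref{extending from tamely ramified}(iii); the assertion in (ii) then follows from (i), the $\Ad\G$-invariance of $\chi$, and the Kostant-section isomorphism over $\mO$ furnished by Proposition \ref{Kostant section pro}.

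For part (i), the inclusion $\supseteq$ is immediate from the standard fact that for $g\in\G_{x,r}$ the operator $\Ad g-1$ carries $\g_{x,s}$ into $\g_{x,r+s}$: applied to $Y+\ell\in\g_{x,0}$ with $\ell\in L_r$, this gives $\Ad g(Y+\ell)\in Y+\ell+\g_{x,r}\subseteq Y+\g_{x,r}$. For the reverse inclusion, let $X_0\in\g_{x,r}$ be given and set $r_0:=r$. We iteratively produce, for $k\geq 1$, elements $Z_k\in\g_{x,r_{k-1}}$, $\ell_k\in L_{r_{k-1}}$, and residuals $X_k\in\g_{x,r_k}$ with $r_k>r_{k-1}$: apply Corollary \ref{extending from tamely ramified}(iii) at level $r_{k-1}$ to write $X_{k-1}=[Y,Z_k]+\ell_k+X_k'$ with $X_k'\in\g_{x,r_{k-1}+}$, and conjugate by the mock exponential $\mathrm{e}(Z_k)\in\G_{x,r_{k-1}}$. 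Using the property $\Ad\mathrm{e}(Z_k)W\equiv W+[Z_k,W]\pmod{\g_{x,2r_{k-1}+s}}$ valid for $W\in\g_{x,s}$, together with $[Z_k,\ell_1+\cdots+\ell_{k-1}]\in\g_{x,r+r_{k-1}}\subseteq\g_{x,r_{k-1}+}$ (here $r>0$ is essential, since $\ell_j\in L_r$ has no positive Moy--Prasad depth on its own) and $[Z_k,X_{k-1}]\in\g_{x,2r_{k-1}}\subseteq\g_{x,r_{k-1}+}$, one checks that
\[
\Ad\mathrm{e}(Z_k)\cdots\Ad\mathrm{e}(Z_1)(Y+X_0)=Y+(\ell_1+\cdots+\ell_k)+X_k
\]
with $X_k$ in some strictly smaller Moy--Prasad lattice. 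Completeness of $F$ now allows passage to the limit: the partial products of mock exponentials converge in $\G_{x,r}$, the partial sums $\sum_k\ell_k$ converge to some $\ell_\infty\in L_r$ (since $L_r$ is closed and the increments live in strictly smaller lattices), and $X_k\to 0$; taking limits exhibits $Y+X_0$ as an $\Ad\G_{x,r}$-conjugate of $Y+\ell_\infty\in Y+L_r$. The main thing to verify carefully is the Moy--Prasad bookkeeping that controls the corrections at each step, but this is routine once one notes that the $[Y,Z_k]$ term is cancelled by design and every other correction lands in $\g_{x,r_{k-1}+}$.

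For part (ii), in the unramified case ($K=E$) the group $\G$ is defined over $\mO$ and $x$ is hyperspecial, so $L_\mO:=L_F\cap\g(\mO)$ is an $\mO$-lattice (arising from $L_{\mO_E}$ by Galois descent) with $L_m=\varpi^m L_\mO$, and Proposition \ref{Kostant section pro} (applicable because $p$ being \gFgood/ implies \ngood/ in this tamely ramified setting) yields an isomorphism of $\mO$-schemes $\chi:Y+L_\mO\xrightarrow{\sim}\q$. Since $Y$ is nilpotent, every element of $\mO[\g]^\G$ takes the same value on $Y$ as on $0$, so $\chi(Y)=\chi(0)$. Reducing the isomorphism $Y+L_\mO\cong\q$ modulo $\varpi^m$ identifies the fiber over $\chi(0)\bmod\varpi^m$ with $Y+\varpi^m L_\mO=Y+L_m$, whence $\chi(Y+L_m)=\q(m)$. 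Combining this with part (i) and the $\Ad\G$-invariance of $\chi$ (when $m\geq 1$; the case $m=0$ is trivial since the Kostant section already surjects onto $\q(\mO)$) yields
\[
\chi(Y+\g_{x,m})=\chi\bigl({}^{\G_{x,m}}(Y+L_m)\bigr)=\chi(Y+L_m)=\q(m),
\]
as required.
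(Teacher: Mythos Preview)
Your proof is correct and follows essentially the same approach as the paper's: both establish (i) via an iterative conjugation by mock exponentials (whose existence is guaranteed by \blahblah/), using the decomposition $\g_{x,l}=[Y,\g_{x,l}]+L_l+\g_{x,l+}$ from Corollary \ref{extending from tamely ramified}(iii) at each stage, and both deduce (ii) from (i) together with the integral Kostant-section isomorphism of Proposition \ref{Kostant section pro}. Your write-up is in fact more explicit than the paper's---the paper states the single-step reduction \eqref{for unramified AD04} and leaves the iteration and convergence implicit, whereas you spell out the limiting argument and also unpack the $m=0$ case of (ii) separately---but the underlying ideas are identical.
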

\begin{proof}
Part (ii) follows from (i) together with
Proposition \ref{Kostant section pro}.

Let us prove (i) exactly as in
\cite[Lemma 5.2.1]{Deb02a}
(whose setting is far less restrictive, but
requires $p$ to be zero or large),
which is in turn inspired by
\cite[\Sec IX.4]{Wal01}.
Only for the reader's convenience,
we give the details.

Only the containment `$\subseteq$' is nontrivial.
By condition (iii) in Corollary \ref{extending from tamely ramified},
we have that for each $l$,
$\g_{x, l} = [Y, \g_{x, l}] + L_l + \fg_{x,l+}$.
Hence it is enough to prove that for each $l \geq r$,
\begin{equation} \label{for unramified AD04}
Y + L_r + [Y, \g_{x, l}] \subset \,^{\G_{x, l}}(Y +
L_r + \g_{x, l+}).
\end{equation}	
Let $Y + C + [Y, P]$ belong to the left-hand side, with
$C \in L_r$ and $P \in \g_{x, l}$. We wish to
find $h \in \G_{x, l}$ such that $\Ad h(Y + C + [Y, P])
\in Y + L_r + \g_{x, l+}$.

For this we use a mock exponential map.
Let $\varphi_l : \g_{x, l} \longrightarrow
\G_{x, l}$ be as constructed in
\cite[\Sec\Sec1.3--1.5]{Ad98}.
(Note that the assumption of $\G$ satisfying \blahblah/ is necessary
to ensure that such a map exists,
contrary to the claims in \cite{Ad98,MP96}.)
We set $h = \varphi_l(-P)$.
Since
\begin{multline*}
\Ad h(Y + C + [Y, P]) \\
=
Y + C+ (\Ad h(Y)-Y+[Y,P]) + (\Ad h(C) - C) + (\Ad h([Y,P])-[Y,P]),
\end{multline*}
it suffices to show that each of the 
three parenthetical terms on the right-hand side of
the above equation belongs to $\g_{x, l+}$.

This follows from \cite[Prop.\ 1.6.3]{Ad98},
together with the fact that $[\g_{x,a},\g_{x,b}]\subseteq \g_{x,a+b}$
for all $a,b\in \RR$.

\end{proof}

\subsection{The main result of \protect\cite{AD04} under our assumptions} \label{with AD04 main result}

The following result amounts to (a slight sharpening of) \cite[Proposition 1]{AD04} but with
much milder hypotheses.

\begin{pro} \label{AD04 main result} 
Assume $\G$ is \esstame/, and $p$ is \ngood/.
\begin{enumerate}[(i)]
\item 
Let $Z \in \g(F)$.
Then $Z$ is $\G(\Fbar)$-conjugate to an element of
$Y + \g_{x, 0+}$ if and only if it is regular and topologically nilpotent.
\item
If $\G$ satisfies \blahblah/, and $p$ is \gFgood/,
then for any regular $Z \in \g(F)_{\tn}$,
$\Ad \G(\Fbar)(Z) \cap (Y + \g_{x, 0+})$ is a single orbit under
$\G_{x, 0+}$.
\end{enumerate}
\end{pro}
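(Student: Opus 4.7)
The plan is to extract everything from the Kostant section $Y + L_F$ furnished by Corollary \ref{extending from tamely ramified}. The hypotheses of part~(i) (namely that $\G$ is \esstame/ and $p$ is \ngood/) supply such an $L_F$, and by Proposition \ref{Kostant section pro} (applied to $\G$ base-changed to a tame splitting field $E'/F$, then descended to $F$ via Galois invariance) the restriction $\chi|_{Y + L_F}$ is a bijection onto $\q$, while Corollary \ref{extending from tamely ramified}(ii) guarantees that the topologically nilpotent elements of the section form precisely the set $Y + L_{0+} \subset Y + \g_{x,0+}$. This Kostant-section bijection is what lets us translate orbit-theoretic questions about $Y + \g_{x, 0+}$ into fiber-theoretic questions about $\chi$.

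For part~(i), for the ``if'' direction, given $Z \in \g(F)$ regular and topologically nilpotent, I take the unique $W \in Y + L_F$ with $\chi(W) = \chi(Z)$; since $\chi(Z) \in \q(1)$ by Lemma \ref{unramified tn}, the element $W$ is also topologically nilpotent, so $W \in Y + L_{0+} \subset Y + \g_{x, 0+}$, and the regular elements $Z$ and $W$ lying over the same point of $\q$ must be $\G(\Fbar)$-conjugate. For the ``only if'' direction, I pick $W \in Y + \g_{x,0+}$. Topological nilpotence is the easier half: passing to a tame splitting field $E'$ (using \esstame/ and Remark \ref{Recall from Ad98}(i)), one has $W \equiv Y \pmod{\varpi_{E'} \g(\mO_{E'})}$, so $\chi(W)$ reduces to $\chi(\bar Y) = 0$ in $\q(\kappa_{E'})$, placing $\chi(W) \in \q(1)$ and hence (by Lemma \ref{unramified tn}) making $W$ topologically nilpotent. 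Regularity is the technical heart of the argument under only the \ngood/ hypothesis: the injectivity of $\ad \bar Y$ on $\n^-(\kappa_{E'})$ together with the fact that $\ad W$ agrees with $\ad Y$ modulo $\varpi_{E'}$ implies that $\ad W : \n^-(\mO_{E'}) \to \g(\mO_{E'})$ remains injective with saturated image; combining this with the action-map isomorphism of Lemma \ref{a is injective}, one adapts the sliding argument of the proof of Lemma \ref{AD04 for unramified}(i) to show that $W$ is $\G(\Fbar)$-conjugate to the unique $Y + l \in Y + L_{0+}$ with $\chi(Y + l) = \chi(W)$, and hence is regular.

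For part~(ii), Lemma \ref{AD04 for unramified}(i) applies under the stronger hypotheses (\gFgood/ and \blahblah/), and taking the union over $r > 0$ of the equalities $Y + \g_{x, r} = \Ad(\G_{x, r})(Y + L_r)$ produces the global identity $Y + \g_{x, 0+} = \Ad(\G_{x, 0+})(Y + L_{0+})$. Given regular topologically nilpotent $Z$ and any two elements $W_1, W_2 \in \Ad \G(\Fbar)(Z) \cap (Y + \g_{x, 0+})$, I write $W_i = \Ad(g_i)(Y + l_i)$ with $g_i \in \G_{x, 0+}$ and $l_i \in L_{0+}$; because $\chi(W_1) = \chi(Z) = \chi(W_2)$ and $\chi|_{Y + L_F}$ is injective, $l_1 = l_2$ follows, so $W_2 = \Ad(g_2 g_1^{-1}) W_1$ with $g_2 g_1^{-1} \in \G_{x, 0+}$, proving the single-$\G_{x, 0+}$-orbit claim.

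The main obstacle is the regularity half of the reverse direction of~(i) under only the \ngood/ hypothesis: Lemma \ref{AD04 for unramified}(i) is not directly available (it requires \gFgood/), so the ``mock exponential'' sliding must be replaced by an argument that uses only the $\n^-$-directional information that \ngood/ provides, while exploiting the freedom to conjugate over $\Fbar$ rather than over the compact group $\G_{x, 0+}(F)$.
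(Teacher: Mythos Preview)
Your treatment of part~(ii) and of the topological-nilpotence half of part~(i) matches the paper's, and your ``if'' direction of~(i) is essentially the paper's argument too, except that you assert without justification that the Kostant-section representative $W \in Y + L_F$ is regular. The paper fills this in via the contracting $\Gm$-action on $Y + L_F$ from Lemma~\ref{a is injective}: since $Y$ is regular and the regular locus is open, every point in the $\Gm$-closure of $Y$ (namely all of $Y + L_F$) is regular. You should add this one sentence.

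The genuine gap is in your regularity argument for the ``only if'' direction of~(i). You propose to show that an arbitrary $W \in Y + \g_{x,0+}$ is $\G(\Fbar)$-conjugate to an element of $Y + L_{0+}$ by a sliding argument exploiting only the injectivity of $\ad \bar Y$ on $\n^-(\kappa_{E'})$ together with Lemma~\ref{a is injective}. But Lemma~\ref{a is injective} controls only $Y + \b^-$, whereas $W$ has components in $\g(j)$ for $j \geq 2$, and conjugation by $\N^-$ cannot kill those (since $[\n^-, \g(j)] \subset \g({<}j)$). More fundamentally, the iterative/sliding approach to conjugating $W$ into the Kostant section needs that $[Y, \g(\kappa_{E'})] + \bar L$ spans $\g(\kappa_{E'})$, i.e., that the differential of the action map $\G \times (Y + L) \to \g$ is surjective at $(1,Y)$ over the residue field. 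That is precisely the \ggood/ condition, not the \ngood/ condition; under \ngood/ alone there is no reason for this surjectivity, and ``working over $\Fbar$'' does not help since the obstruction is at the residue-field level.

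The paper bypasses this entirely with a one-line semicontinuity argument: regarding $W \in \g(\mO_E)$ and letting $C_{\G_{\mO_E}}(W)$ be its centralizer scheme over $\mO_E$, one has $\dim C_{\G_E}(W) \leq \dim C_{\G_{\kappa_E}}(\bar Y)$ by upper semicontinuity of fiber dimension (\cite[Expos\'e VI${}_{\textrm{B}}$, Proposition~4.1]{SGA3-I}). Since $\bar Y$ is regular over $\kappa_E$ (\cite[Lemma~3.1.1]{Ric16}), the right-hand side equals $\rk \G$, forcing $W$ to be regular. This argument needs nothing beyond the integral model supplied by~$x$ and the regularity of $\bar Y$; no complement condition on $[Y,\g]$ enters. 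You should replace your sliding sketch with this.
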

\begin{proof}
Let $X\in Y+\fg_{x,0+} \subset \fg(\mO_E)$,
and let $C_{\G_{\mO_E}}(X)$ be
the centralizer scheme of $X$ in $\G_{\mO_E}$.
By \cite[Expos\'e VI${}_{\textrm{B}}$, Proposition~4.1]{SGA3-I},
$\dim C_{\G_{E}}(X)
= \dim C_{\G_{\mO_E}}(X)_E
\leq \dim C_{\G_{\mO_E}}(X)_{\kappa_E}
= \dim C_{\G_{\kappa_E}}(\ov Y)$,
where $\ov Y$ is the image of $Y$ in $\fg(\kappa_E)$.
Hence, since $\ov Y$ is regular (\cite[Lemma 3.1.1]{Ric16}),
we deduce that $X$ is regular.
Thus, if $Z$ is conjugate to an element in $Y+\g_{x,0+}$, then it is regular. 
Moreover, that every element of $Y + \g_{x, 0+}$ is topologically nilpotent
follows from base changing to a field over which $\G$ splits and applying
Lemma \ref{unramified tn} and Remark \ref{Recall from Ad98}(i).

Suppose now that $Z$ is regular.
Following the reasoning of \cite{Ric16} (around Equation (3.1.1)),
we see that $Y + L_F$ consists
entirely of regular elements.
More precisely, it follows from the facts that $Y$ is regular, that the locus of regular elements is open and that there is
a contracting action of $\mathbb{G}_m$ on $Y+(L_F \otimes_F E)$, which was described in the proof of Lemma \ref{a is injective}.
Thus, $Z$ is $\G(\Fbar)$-conjugate to the unique (regular) element
$Z' \in Y + L_F$ such that $\chi(Z) = \chi(Z')$.
Therefore, $Z \in \g(F)_{\tn}$ if and only if $Z' \in \g(F)_{\tn}$,
which by Condition (ii) in Corollary \ref{extending from tamely ramified} 
is equivalent to $Z' \in Y + L_{0+} \subset Y + \g_{x, 0+}$. This gives (i).

Lemma \ref{AD04 for unramified} now gives (ii): since the proof
of (i) shows that $Z \in \g(F)_{\tn}$ if and only if $\Ad \G(\Fbar)(Z)$ meets
$Y + L_{0+}$, the assertion that $\Ad \G(\Fbar)(Z) \cap (Y + \g_{x, 0+})$ is
a single $\G_{x, 0+}$-orbit is equivalent to saying that
$Y + \g_{x, 0+} = \,^{\G_{x, 0+}}(Y + L_{0+})$.
\end{proof}

\begin{remark} \label{main result for products}
Although we do not need it, we mention in passing that
Proposition \ref{AD04 main result} 
holds for slightly more general groups.
First, it
holds when $\G$ is an arbitrary torus
(i.e., not necessarily satisfying \blahblah/),
because of our definition
of ``topologically nilpotent.''
Second, if the proposition holds for two groups, then it holds for their direct
product.
Third,
if the proposition holds for a group, then it holds for the image of the
group under any isogeny whose schematic kernel has order not divisible
by $p$.
\end{remark}

\begin{remark} \label{reconciling topological nilpotence}
Assume that $p$ is \ggood/ and $\G$ is \esstame/.
Let $\g(F)_{\tn}'$ denote the 
union of the lattices $\g_{z, 0+}$ as $z$ varies over $\mcB(\G)$.
Recall that in \cite{AD04}, unlike in Definition \ref{top nilp},
an element of $\g(F)$ is called ``topologically nilpotent'' precisely
when it belongs to $\g(F)_{\tn}'$.
We now show that under our hypotheses, $\g(F)_{\tn} = \g(F)_{\tn}'$.
The set $\g(F)_{\tn}$ is open and closed in $\g(F)$,
as can be seen for instance by base changing to $E$ and using
Lemma \ref{unramified tn}.
To see that $\g(F)_{\tn}'$ is also open and closed in $\g(F)$,
note that in the definition of this set, we may restrict our union
to barycenters $z$ of alcoves in $\mcB(\G)$,
which shows that for some positive $\varepsilon>0$,
$\g(F)_{\tn}' = \g_\varepsilon$
in the notation of \cite[Corollary 3.4.3]{AD02},
which we can then apply.
Thus,
it suffices to show that
the set of regular semisimple elements in $\g(F)_{\tn}$
is the same as
the set of regular semisimple elements in $\g(F)_{\tn}'$
(here we are using that the regular semisimple elements are dense in $\g(F)$;
this is an easy consequence of the fact that
$d\alpha$ does not vanish for any root $\alpha$,
which follows, as mentioned in
\cite[Remark 2.2.1(1)]{Ric16}, from
$p$ being \ggood/).
If $Z \in \g(F)_{\tn}$ is regular semisimple, then
by Proposition \ref{AD04 main result}, $Z$ has a $\G(\Fbar)$-conjugate
in $Y + \g_{x, 0+}$, which by \cite[Corollary 3.2.6]{AD02}
is contained in $\g(F)_{\tn}'$.
Hence $Z \in \g(F)_{\tn}'$ as well,
since $\g(F)_{\tn}'$ is a union
of stable orbits.
(This follows from \cite[Lemma 8.5]{BKV16}.
Alternatively, use
an argument combining \cite[Hypothesis 3]{AD04} and
\cite[Lemma 2.2.5]{AD04crelle}, as in the proof of
the ``$\Rightarrow$'' implication of \cite[Proposition 1]{AD04}).
Conversely, suppose $Z \in \g(F)_{\tn}'$
is regular semisimple.
Then the identity component $\T_Z$ of the centralizer of $Z$ in $\G$
is a maximal torus.
For an extension $E_1$ of $F$ splitting
$\T_Z$, we have $Z \in \g(E_1)_{\tn}'$, so that
$Z \in \t_1(E_1)_{\tn}'$ by \cite[Theorem 3.1.2(2)]{AD02},
forcing $Z \in \g(E_1)_{\tn} \cap \g(F) = \g(F)_{\tn}$.
\end{remark}

\section{Some pairs of matching functions} \label{pairs of matching functions}
We now assume the following:
\begin{hyp} \label{hyp:Sec3}
\begin{enumerate}[(a)]
\item
\label{item:char0}
$F$ is a finite extension of $\QQ_p$.
\item
\label{item:g-good}
$p$ is \ggood/ for $\G$.
\item
\label{item:p-not-2}
$p \neq 2$.
\item
\label{item:esstame}
$\G$ satisfies \blahblah/ and is \esstame/.
Moreover, if $p = 3$, then, writing 
$\G_{\scn}$ as a product of groups
$\text{Res}_{E_i/F} \H_i$, with $\H_i$ an absolutely
almost simple group over $E_i$,
each group $\H_i$ splits over a quadratic extension of $E_i$.
\item
\label{item:isogeny}
$p$ does not divide the cardinality of the
center of $\G_{\scn}$.
\end{enumerate}
\end{hyp}
 Henceforth, the absolute value $|\cdot|$ on $F$ will be normalized,
and $|\cdot|$ will continue to denote its unique extension to $\Fbar$. For any finite extension $E'$ of $F$ in $\Fbar$, let $|\cdot|_{E'}$
denote the extension to $\Fbar$ of the normalized absolute value
on $E'$.

\begin{remark} \label{p is gFgood}
It follows from
Hypothesis \ref{hyp:Sec3}(\ref{item:isogeny})
that for any Galois extension $E/F$ over which $\G$ splits,
$\Lie(\Z^0)(\mO_E)$ has a $\Gal(E/F)$-invariant
complement over $\mO_E$.
Therefore,
by Remark \ref{adjoint gFgood},
$p$ is \gFgood/.
\end{remark}

\begin{remark}
\begin{enumerate}[(a)]
\item
We assume part (\ref{item:char0})
of Hypothesis \ref{hyp:Sec3}
not only to make it easier to handle
orbits and measures on orbits, but also because we have
not yet been able to locate references stating
a suitable level of generality in which \cite[Theorem 5.1]{Kot99},
which we will need later, may be applied.
\item
Parts (\ref{item:p-not-2}) and (\ref{item:isogeny})
are superfluous unless $\G$ has a factor of type $A_n$.
\item
Part (\ref{item:esstame})
only excludes a few cases beyond those already
excluded by other parts of Hypothesis \ref{hyp:Sec3}:
We require $p\neq 3$ if some $\H_i$ is an unramified triality form of $D_4$.
(This makes $p$, if positive,
a `very good prime' in the sense of
\cite[\Sec 8.10]{BKV16}.)
We require this hypothesis, along with 
parts (\ref{item:p-not-2}) and (\ref{item:isogeny}),
in order to apply a Kazhdan--Varshavsky quasi-logarithm.
\end{enumerate}
\end{remark}

Let $\H$ be a (necessarily quasi-split) group underlying a fixed endoscopic
datum for $\G$,
and assume that $\H$ satisfies Hypothesis \ref{hyp:Sec3} as well.
Henceforth, we fix a Galois extension $E/F$ that splits both
$\G$ and $\H$.

\subsection{Comments on measures} \label{comments on measures}
We will have to work with Haar measures specified using differential forms,
and follow the second paragraph of \cite[\Sec1.4]{LS87}.
For an algebraic
group $\G_1$ over $F$, recall that $\G_{1, \Fbar}$ denotes its
base change to $\Fbar$.
First recall \emph{loc.\ cit.} that, given an algebraic group $\G_1$
over $F$ and a highest-degree invariant differential
form $\omega_1$ on $\G_{1, \Fbar}$, we can
attach a Haar measure $|\omega_1|$ on $\G_1(F)$
by choosing any $\mu_1 \in \Fbar^{\times}$ such
that $\mu_1 \omega_1$ is defined over $F$
(such a $\mu_1$
exists by Hilbert's Theorem 90), and setting
$|\omega_1| := |\mu_1|^{-1} |\mu_1 \omega_1|$.
One similarly obtains a Haar measure on $\G_1(E')$
for every finite extension $E'/F$ (using $\omega_1$ and the normalized
absolute value $|\cdot|_{E'}$ on $E'$),
and this measure will be denoted by $|\omega_1|_{E'}$.
Choose highest-degree differential forms $\omega_{\G}$, $\omega_{\H}$
and $\omega_{\T}$ on $\G$, $\H$ and $\T$ respectively, and
use these to fix Haar measures $dg$,
$dh$ and $dt$ on $\G(F)$, $\H(F)$ and $\T(F)$
respectively, in the manner just described.
By transport of structure from $\T$ via inner automorphisms,
we can choose highest-degree forms $\omega_{\T'}$ on 
each maximal torus $\T'$ of $\G_{\Fbar}$ (well defined up to
scaling by $\mO_{\Fbar}^{\times}$). Further, the endoscopic
datum also allows us to transfer $\omega_{\T}$ to a highest-degree
differential form $\omega_{\T''}$ on each maximal torus
$\T''$ of $\H_{\Fbar}$ (see \cite[\Sec1.4]{LS87}).
For a maximal torus $\T'$ of $\G$ or $\H$
defined over $F$, we therefore get an associated measure
$dt' = |\omega_{\T'}|$ on $\T'(F)$,
which (unlike $\omega_{\T'}$) does not depend on any choice other than,
of course, that of $\omega_{\T}$.

\begin{remark}
From now on, until \Sec\ref{adjoint quotients map}, we will
state and prove certain results for $\G$. Since $\H$ satisfies the same
hypotheses as $\G$, we may and shall later apply them in the context of
$\H$, too.
\end{remark}

Recall that for each $y \in \mcB(\G)$ and $r \in \RR$, we also have a
Moy--Prasad lattice $\g_{y, r}^* \subset \g^*(F)$ given by:
\begin{equation} \label{gyr* definition}
\g_{y, r}^* =
\set{\Upsilon \in \g^*(F) }{ \Upsilon(\g_{y,(-r)+}) \subseteq \varpi \mO_F }.
\end{equation}
By Proposition 4.1 of \cite{AR00},
thanks to $p$ being \ggood/,
there exists an $\Ad \G$-invariant symmetric nondegenerate bilinear form
$\langle \cdot , \cdot \rangle$ on $\g$
that induces an identification of $\g_{y, r}$ with $\g_{y, r}^*$
for each $y \in \mcB(\G)$ and $r \in \RR$. Fix one such.

\begin{remark} \label{AR00 bilinear form}
In \cite{AR00} such a form $\langle \cdot, \cdot \rangle$ is constructed
as the restriction to $\g(F)$ of a bilinear form on $\g(E_1)$
satisfying analogous properties over $E_1$, where $E_1/F$ is
an extension that splits $\G$. Thus, we may and do assume that for each
$y \in \mcB(\G, E)$,
$\langle \cdot, \cdot \rangle$ induces an identification
of the lattice
$\g(E)_{y, r} \subset \g(E)$ with the lattice
$\g^*(E)_{y, r} \subset \g^*(E)$.
In particular, if $y$
is hyperspecial over $E$, $\langle \cdot, \cdot \rangle$
induces a perfect pairing on $\g(E)_{y, 0}$.
\end{remark}

Now let $X \in \g(\Fbar)$ be regular semisimple, so
that its centralizer $\T_X \subset \G_{\Fbar}$
is a maximal torus.
The Lie algebra $\t_X$ of $\T_X$ is also the
kernel of $\ad X$.
We now have two 
$\G_{\Fbar}$-invariant
top-degree differential forms  (which are well defined
modulo $\mO_{\Fbar}^{\times}$-scaling) on the $\G_{\Fbar}$-orbit
of $X$,
which may and shall be identified with
$\G_{\Fbar}/\T_X$. The first is $\omega_X :=
\omega_{\G}/\omega_{\T_X}$, and the second is the differential
form $\omega_X'$ arising from the nondegenerate symplectic
form $\langle \cdot, \cdot \rangle_X$ on the tangent space
$\g_{\Fbar}/\t_{\X}$ to
the variety $\G_{\Fbar}/\T_X$ at $1 \cdot \T_X$ induced
by the degenerate symplectic pairing on $\g_{\Fbar}$
defined by $\langle v, w \rangle_X := \langle X, [v, w] \rangle
= \langle [X, v], w \rangle$.
Thus, if $e_1, \dots, e_{2r}$ is an ordered symplectic basis for
$\langle \cdot, \cdot \rangle_X$, i.e., $\langle e_i, e_j \rangle_X
= \delta_{i(2r + 1 - j)}$ for $1 \leq i \leq r$, then $\omega_X'$
takes the value one on $e_1 \wedge \dots \wedge e_{2r}$.
If $X$ and each $e_i$ are defined over $F$ and the $\mO$-lattice spanned
by the $e_i$ in $\g(F)/\t_X(F)$ is the image of an $\mO$-lattice $L$
in $\g(F)$, then the measure $|\omega_X'|$
on $\G(F)/\T_X(F)$ is the quotient of the measures on
$\G(F)$ and $\T_X(F)$ corresponding to the measures on $\g(F)$ and $\t_X(F)$
normalized by $L$ and $L \cap \t_X(F)$, respectively.

\begin{remark} \label{under this notation}
For any finite extension $E'/F$ in $\Fbar$,
given any differential form
$\omega$ on $\g\times_F \Fbar$,
the measure
$|\omega|_{E'}$ on $\g(E')$ can be described as follows.
Given a lattice $L \subset \g(E')$,
let $a$ denote a generator of
the lattice determined by $L$ inside the top exterior power
$\bigwedge^{\dim \g}_{E'} \g(E')$.  For example, $a$ could be $e_1 \wedge
\dots \wedge e_{\dim \g}$ for any $\mO_{E'}$-basis $e_1, \dots, e_{\dim \g}$
of $L$.
Let $\tilde m(L, \omega) \in {\Fbar}^{\times}/\mO_{\Fbar}^{\times}$
denote the image of $\omega(a)$.
This is independent of our choice of $a$.
Then
\[
\meas(L, |\omega|_{E'}) = |\tilde m(L, \omega)|_{E'}
=
|\tilde m(L, \omega)|_F^{f_{E'/F}},
\]
where $f_{E'/F}$ denotes the residue degree of $E'/F$.
We define
$$
\mm(L, |\omega|) := \meas(L, |\omega|_{E'})^{1/f_{E'/F}}.
$$
Note that this definition has the following invariance property:
if $E', L, \omega$ are as above, and $E'' \supset E'$ is a finite extension
inside $\Fbar$, then $\mm(L, |\omega|) = \mm(L \otimes_{\mO_{E'}}
\mO_{E''}, |\omega|)$.
\end{remark}

\subsection{The relation between $|\omega_X|$ and $|\omega_X'|$} \label{omegas relation}

\begin{lm} \label{omegaX omegaX'}
The measures $|\omega_X|$ and $|\omega_X'|$ are related by
\[
|\omega_X'|
=
D_{\g}(X)^{1/2}
\dfrac{\mm(\t(\mO_E), |\omega_{\T}|)}{\mm(\g(E)_{x, 0}, |\omega_{\G}|)}
,
\]
where $D_{\g}(X) = |\det(\ad X; \g/\t_X)|$.
\end{lm}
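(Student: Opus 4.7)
The plan is to compute at the base point $1\cdot\T_X$, where both $\omega_X$ and $\omega_X'$ specialize to top forms on $\g/\t_X$. Let $V\subset\g$ be the orthogonal complement of $\t_X$ under $\langle\cdot,\cdot\rangle$; invariance of the form forces $V$ to be $\ad X$-stable, and regular semisimplicity of $X$ makes $\ad X|_V$ an automorphism of $V$. Fix a finite extension $E'\supset E$ and an element $g\in\G(E')$ with $\Int(g)(\T_{E'})=\T_X$; then $g\cdot x$ is hyperspecial over $E'$, and the lattice $\g(E')_{g\cdot x,0}=\Int(g)(\g(E')_{x,0})$ decomposes as the $\langle\cdot,\cdot\rangle$-orthogonal sum $\t_X(\mO_{E'})\oplus V_{\mO_{E'}}$ by its $\T_X$-weight decomposition (weights $\alpha,\beta$ pair trivially unless $\beta=-\alpha$, and $\t_X$ is the zero-weight part).

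I will evaluate both forms on the image in $\g/\t_X$ of an $\mO_{E'}$-basis $e_1,\ldots,e_{2s}$ of $V_{\mO_{E'}}$. For $|\omega_X'|$: a direct computation shows that the Gram matrix of $\langle\cdot,\cdot\rangle_X$ in the basis $(e_i)$ factors as $M^{T}B$, where $M$ is the matrix of $\ad X|_V$ and $B=(\langle e_i,e_j\rangle)_{ij}$. This matrix is skew-symmetric by $\Ad$-invariance of $\langle\cdot,\cdot\rangle$, so
\[
|\omega_X'(\bar e_1\wedge\cdots\wedge\bar e_{2s})| = |\det M|^{1/2}\,|\det B|^{1/2} = D_\g(X)^{1/2}\,|\det B|^{1/2}.
\]
The crucial input is that $|\det B|=1$: by Remark \ref{AR00 bilinear form} the pairing $\langle\cdot,\cdot\rangle$ is perfect on $\g(E')_{g\cdot x,0}$, and its restriction to the orthogonal summand $\t_X(\mO_{E'})$ is also perfect (the zero-weight part pairs nondegenerately with itself in good characteristic), so the restriction to $V_{\mO_{E'}}$ (namely $B$) is perfect as well, forcing $\det B\in\mO_{E'}^\times$.

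For $|\omega_X|$: using $\omega_X=\omega_\g/\omega_{\t_X}$ at the base point and any $\mO_{E'}$-basis $v_1,\ldots,v_r$ of $\t_X(\mO_{E'})$,
\[
|\omega_X(\bar e_1\wedge\cdots\wedge\bar e_{2s})| = \frac{|\omega_\G(v_1\wedge\cdots\wedge v_r\wedge e_1\wedge\cdots\wedge e_{2s})|_{E'}}{|\omega_{\T_X}(v_1\wedge\cdots\wedge v_r)|_{E'}}.
\]
By Remark \ref{under this notation} combined with the $\Ad\G$-invariance of $\omega_\G$ and the transport-of-structure definition of $\omega_{\T_X}$ from $\omega_\T$ via $\Int(g)$, the numerator equals $\mm(\g(E)_{x,0},|\omega_\G|)^{f_{E'/F}}$ and the denominator equals $\mm(\t(\mO_E),|\omega_\T|)^{f_{E'/F}}$. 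Taking the ratio of the two evaluations (the $f_{E'/F}$-th roots cancel) yields the claimed identity $|\omega_X'|=D_\g(X)^{1/2}\,\frac{\mm(\t(\mO_E),|\omega_\T|)}{\mm(\g(E)_{x,0},|\omega_\G|)}\,|\omega_X|$.

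The main obstacle is the verification that $|\det B|=1$, which rests on combining the orthogonal weight decomposition of $\g(E')_{g\cdot x,0}$ under $\T_X$ with the self-duality supplied by Remark \ref{AR00 bilinear form}; once that is in place, the remainder is linear algebra around the Pfaffian identity $\det(\text{skew})=\mathrm{Pf}^2$ together with the base-change properties of $\mm$.
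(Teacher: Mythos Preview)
Your argument is correct and follows the same route as the paper's: both reduce via conjugation to the split situation over a finite extension $E'$, then use the $\T_X$-weight decomposition of the hyperspecial lattice together with the unit-integrality of $\langle\cdot,\cdot\rangle$ there (your $\det B\in\mO_{E'}^\times$ is exactly the paper's observation that the scalars $a_\alpha$ with $\langle a_\alpha X_\alpha,X_{-\alpha}\rangle=1$ lie in $\mO_E^\times$). The only difference is packaging: the paper writes down an explicit symplectic basis $\{a_\alpha\,d\alpha(X)^{-1}X_\alpha,\ X_{-\alpha}:\alpha>0\}$ and evaluates both forms on it directly, whereas you run the same computation through the Gram-matrix factorization $M^TB$ and the Pfaffian.

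One small inconsistency to clean up: your first display implicitly uses $|\cdot|_F$ (so that $|\det M|^{1/2}=D_\g(X)^{1/2}$), while your second switches to $|\cdot|_{E'}$ (whence the exponents $f_{E'/F}$). Since the quantity you are after is $|c|_F$ for the scalar $c=\omega_X'(\bar e)/\omega_X(\bar e)\in\Fbar^\times$, it is cleanest to use $|\cdot|_F$ throughout, in which case the $\mm$-values appear directly with no $f_{E'/F}$ to cancel.
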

\begin{proof}
Using Remark \ref{under this notation} and that $|a \omega| =
|a| |\omega|$ for any top-degree form $\omega$ on $\G_{\Fbar}$
and $a \in \Fbar^{\times}$, one sees that
it suffices to prove:
\[
\omega_X' \in \mO_{\Fbar}^{\times} \cdot
\det(\ad X; \g/\t_X)^{1/2}
\dfrac{\tilde m(\t(\mO_E), \omega_{\T})}{\tilde m(\g(E)_{x, 0}, \omega_{\G})}
\cdot \omega_X
\]
(the choice of square-root clearly does not matter).
This formulation allows us to change the base field:
since the assignments $X \mapsto \omega_X$ and $X \mapsto \omega_X'$
behave well with respect to $\G(\Fbar)$-conjugation (in the sense
that $\omega_X \in \omega_{\Ad g(X)} \cdot \mO_{\Fbar}^{\times}$
and similarly for $\omega_X'$), we may and do assume
that $X \in \t(E')$ for some finite extension $E'$ of $E$ contained in $\Fbar$.
Thus, $\T_{E'} = \T_{X, E'}$. 
For each root $\alpha\in R(\B,\T)$,
by
the construction of the form 
$\langle \cdot , \cdot \rangle$,
there exists $a_{\alpha} \in \mO_E^{\times}$ such that
$\langle a_{\alpha} X_{\alpha}, X_{- \alpha} \rangle = 1$, where
for this proof, we write $X_{- \alpha}$
to mean a fixed basis element for the root space $\g_{- \alpha}(\mO_E)$
(recall that our fixed pinning realizes $\G$ as a split
reductive group over $\mO_E$).
Then an ordered symplectic
basis $e_1, \dots, e_{\dim \G - \rk \G}$ for $\langle \cdot, \cdot \rangle_X$ over $E'$ may be
chosen to have as its underlying set:
\[
\set{a_{\alpha} \cdot d\alpha(X)^{-1} X_{\alpha}}{\alpha > 0}
\cup
\set{X_{- \alpha}}{\alpha > 0},
\]
with $d\alpha$ denoting the derivative of $\alpha$.
The form $\omega_X'$ then takes the value $1$ at
$e_1 \wedge \dots \wedge e_{\dim \G - \rk \G}$,
whereas, the image of $\omega_X(e_1 \wedge \dots \wedge
e_{\dim \G - \rk \G})$ in ${\Fbar}^{\times}/\mO_{\Fbar}^{\times}$
equals
\[
\det(\ad X; \g/\t_X)^{-1/2}
\dfrac%
{\tilde m(\g(E)_{x, 0} \otimes_{\mO_E} \mO_{E'}, \omega_{\G})}
{\tilde m(\t(\mO_{E'}), \omega_{\T})}
=
\det(\ad X; \g/\t_X)^{-1/2}
\dfrac%
{\tilde m(\g(E)_{x, 0}, \omega_{\G})}
{\tilde m(\t(\mO_E), \omega_{\T})}
\]
(use that 
$\prod_{\alpha > 0} d\alpha(X) \in
\mO_{\Fbar}^{\times} \cdot \det(\ad X; \g/\t_X)^{1/2}$, and also
use Remark \ref{under this notation} again), proving the lemma.
\end{proof}

\subsection{Studying $\omega_X'$ for semisimple $X \in Y + \g_{x, 0+}$} \label{studying omegaX'}
\begin{notn}
If $L$ is a lattice (of full rank) in a finite-dimensional vector
space $V$ over $F$ with a symmetric
or alternating bilinear form $B$, then we set
(suppressing dependence on $B$ for lightness of notation)
\[
L^{\perp} = \set{v \in V}{ B(v, L) \subseteq \mO}.
\]
\end{notn}

\begin{lm} \label{selfdual}
Suppose $X \in Y + \g_{x, 0+}$ is semisimple.
Then, in the symplectic
space $(\g(F)/\t_X(F), \langle \cdot, \cdot \rangle_X)$,
\begin{equation} \label{dual lattice}
\left( \g_{x, 0+}/ (\g_{x, 0+} \cap \t_X(F)) \right)^{\perp} =
\g_{x, -1}/ (\g_{x, -1} \cap \t_X(F)).
\end{equation}
\end{lm}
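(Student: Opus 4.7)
My plan is to translate the symplectic perpendicular condition into one involving $\ad X$, and then argue by downward induction on Moy--Prasad depth modulo $\t_X(F)$. Using the $\Ad$-invariance identity $\langle v,w\rangle_X=\langle[X,v],w\rangle$ together with the Moy--Prasad $\mO$-duality $\g_{x,0+}^{\perp,\mO}=\g_{x,-1}$ (immediate from \eqref{gyr* definition} via the identification $\g\cong\g^*$ afforded by $\langle\cdot,\cdot\rangle$), I would identify the left-hand side of \eqref{dual lattice} with the image in $V:=\g(F)/\t_X(F)$ of $\{w\in\g(F):[X,w]\in\g_{x,-1}\}$. The inclusion $\supseteq$ of the lemma is immediate since $X\in\g_{x,0}$ forces $[X,\g_{x,-1}]\subseteq\g_{x,-1}$.

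For the reverse inclusion, I would write $X=Y+Z$ with $Z:=X-Y\in\g_{x,0+}$ and suppose for contradiction that $w\in\g(F)$ satisfies $[X,w]\in\g_{x,-1}$ but $w\notin\g_{x,-1}+\t_X(F)$. Let $r_0$ be the maximum real number with $w\in\g_{x,r_0}+\t_X(F)$ (achieved by the discreteness of Moy--Prasad depths), so $r_0<-1$. After replacing $w$ by a representative in $\g_{x,r_0}\setminus(\g_{x,r_0+}+\t_X(F))$, the fact that $w\notin\g_{x,-1}+\t_X(F)$ forces a Moy--Prasad jump in $(r_0,-1]$, hence $r_0+\leq -1$ and $\g_{x,-1}\subseteq\g_{x,r_0+}$. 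Then $[X,w]\in\g_{x,-1}\subseteq\g_{x,r_0+}$ and $[Z,w]\in[\g_{x,0+},\g_{x,r_0}]\subseteq\g_{x,r_0+}$ combine to give $[\ov Y,\ov w]=0$ in $\g_{x,r_0}/\g_{x,r_0+}$. Decomposing $\ov w=\sum_j\ov w_j$ into $\lambda$-weight components and invoking the injectivity of $\ov{\ad Y}$ on $\g_{x,r_0}(j)/\g_{x,r_0+}(j)$ for $j<0$ (part of the \gFgood/ hypothesis) yields $\ov w\in\bigoplus_{j\geq 0}\g_{x,r_0}(j)/\g_{x,r_0+}(j)$, with $\ov w\in\ker(\ov{\ad Y}|_{\g_{x,r_0}/\g_{x,r_0+}})$.

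The crux is then to show $\ov w$ lies in the image of $\t_X\cap\g_{x,r_0}$ in $\g_{x,r_0}/\g_{x,r_0+}$, which would give $w\in\g_{x,r_0+}+\t_X(F)$, contradicting the maximality of $r_0$. This image injects into $\ker(\ov{\ad Y}|_{\g_{x,r_0}/\g_{x,r_0+}})$ because $[X,\t_X]=0$ and $\ov{\ad X}=\ov{\ad Y}$ on the graded piece. Telescoping the surjectivity of $\ov{\ad Y}$ for $j\geq 0$ (the other half of \gFgood/) yields $\dim_\kappa\ker(\ov{\ad Y}|_{\g_{x,r}/\g_{x,r+}})=\dim_\kappa\g_{x,r}(0)/\g_{x,r+}(0)=\dim_\kappa(\t\cap\g_{x,r})/(\t\cap\g_{x,r+})$ for every $r$. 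Summing over $r\in[0,1)$ and using the periodicity $\g_{x,r+1}=\varpi\g_{x,r}$, together with the fact that $\t_X\cap\g_{x,0}$ and $\t\cap\g_{x,0}$ are full-rank $\mO$-lattices of rank $\rk\G$ in $\t_X$ and $\t$ respectively (since every element of these tori becomes integral after scaling by a sufficiently negative power of $\varpi$), both $\sum_r\dim_\kappa(\t_X\cap\g_{x,r})/(\t_X\cap\g_{x,r+})$ and $\sum_r\dim_\kappa\ker(\ov{\ad Y}|_{\g_{x,r}/\g_{x,r+}})$ equal $\rk\G$. Since the per-$r$ inequality $\dim_\kappa(\t_X\cap\g_{x,r})/(\t_X\cap\g_{x,r+})\leq\dim_\kappa\ker(\ov{\ad Y}|_{\g_{x,r}/\g_{x,r+}})$ already holds and the totals match, per-$r$ equality is forced, giving the desired containment at $r=r_0$. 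The hard part will be pinning down this per-$r$ dimension match via the global rank computation; the rest is bookkeeping with the Moy--Prasad filtration.
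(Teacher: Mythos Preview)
Your proof is correct and takes a genuinely different route from the paper's. The paper does not attack the inclusion $\subseteq$ head-on; instead it passes to the $\langle\cdot,\cdot\rangle_X$-perpendicular of both sides (using that double perp is the identity on lattices) and reduces to showing that $[X,Z]\in\g_{x,0+}$ forces $Z\in\t_X(F)+\g_{x,0+}$, i.e., that $[X,\g_{x,0+}]$ is saturated in $\g_{x,0+}$. This is then handled by a single global rank computation on $\g_{x,0+}/\g_{x,1+}$: the associated graded of $\overline{\ad X}$ coincides with that of $\overline{\ad Y}$, and Corollary~\ref{extending from tamely ramified}(iii) (the decomposition $\g_{x,r}=[Y,\g_{x,r}]+L_r+\g_{x,r+}$ coming from the Kostant section $L_F$) pins the corank down to $\rk\G$. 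Your argument, by contrast, works with the inclusion directly, runs a downward induction on Moy--Prasad depth modulo $\t_X(F)$, and replaces the appeal to $L_F$ by the telescoping computation $\dim_\kappa\ker(\overline{\ad Y}|_{\g_{x,r}/\g_{x,r+}})=\dim_\kappa\g_{x,r}(0)/\g_{x,r+}(0)$ together with the per-$r$ matching of $\dim_\kappa(\t_X\cap\g_{x,r})/(\t_X\cap\g_{x,r+})$ against this kernel dimension via their common total $\rk\G$. The paper's route is shorter and packages everything into one rank statement, but leans on the auxiliary object $L_F$; yours is more self-contained, using only the raw \gFgood/ injectivity/surjectivity, at the cost of the slightly delicate inequality-plus-matching-sums step.
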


\begin{proof}
By Equation \eqref{gyr* definition} and the choice of
$\langle \cdot, \cdot \rangle$, we have $\g_{x, 0+}^{\perp}
= \varpi^{-1} \g_{x, 0} = \g_{x, -1}$, with $\perp$ being taken
with respect to the bilinear form $\langle \cdot, \cdot \rangle$.
This together with the fact that $\g_{x, 0+}$ is invariant under
$\ad X$ gives the relation `$\supseteq$'.

For the reverse inclusion, it is enough to show that any element
$Z \in \g(F)$ such that $[X, Z] \in \g_{x, -1}^{\perp} = \g_{x, 0+}$
($\perp$ taken with respect to $\langle \cdot, \cdot \rangle$)
necessarily belongs to
$\t_X(F) + \g_{x, 0+} = \ker \ad X + \g_{x, 0+}$.
In other words,
it is enough to show that:
\begin{equation} \label{Xgx0p}
[X, \g_{x, 0+}] = [X, \g(F)] \cap \g_{x, 0+}.
\end{equation}
This is equivalent to showing
that the rank of the endomorphism $\overline{\ad X}$ of
$\g_{x, 0+}/\varpi \g_{x, 0+}$ induced by $\ad X$, which a priori
is at most $\dim_F [X, \g(F)] = \dim \G - \rk \G$, is actually equal to
$\dim \G - \rk \G$. Now 
$\overline{\ad X}$  preserves the Moy--Prasad filtration of
$\g_{x, 0+}/\varpi \g_{x, 0+} = \g_{x, 0+}/\g_{x, 1+}$ induced by
the lattices $\g_{x, r+}$ (for $0 < r < 1$),
and we can consider the associated
graded map $\overline{\ad X}_{\mathrm{gr}}$.
A priori, the
rank of $\overline{\ad X}_{\mathrm{gr}}$ is at most that of
$\overline{\ad X}$, so it suffices to show that
the rank of $\overline{\ad X}_{\mathrm{gr}}$ equals
$\dim_F [X, \g(F)] = \dim \G - \rk \G$.
The map $\overline{\ad X}_{\mathrm{gr}}$
is the same as
the associated graded map of the analogously defined endomorphism
$\overline{\ad Y}$. Since $p$ is \gFgood/ by Remark \ref{p is gFgood},
Corollary \ref{extending from tamely ramified}(iii) applies to show,
using notation from there, that
the codimension of the image of $\overline{\ad Y}$ on
$\g_{x, r}/\g_{x, r+}$ equals $\dim_{\kappa} L_r/L_{r+}$. Thus, the rank
of $\overline{\ad X}_{\mathrm{gr}}$ equals $\dim \g -
\dim_{\kappa} (L_{0+}/\varpi L_{0+})$, which equals $\dim \g -
\rk \g$.
\end{proof}

\subsection{Map of adjoint quotients} \label{adjoint quotients map}
Recall that we are assuming that $\H$ splits over $E$.
Just as we did with $\G$ in \Sec\ref{set up}, we fix a $\Gal(\Fbar/F)$-stable
pinning for $\H$ over $E$ and get a point
$x_{\H}$ in the reduced Bruhat--Tits building $\mcB(\H)$ of $\H$
as well as a regular nilpotent element $Y_{\H} \in \h(F)$.

Let $\chi_{\H} : \h \longrightarrow \q_{\H}$ denote the
adjoint quotient of $\h$
(as in Notation \ref{adjoint quotient notation}).
Following the notational set up of \cite[\Sec1.1.6]{KV12},
the transfer of stable conjugacy
classes from $\h$ to $\g$ is described by a finite morphism
$\nu : \q_{\H} \longrightarrow \q$.
Namely, for each maximal torus $\T_{\H}$ of $\H$, there exists a certain stable
conjugacy class of embeddings
$\iota : \T_{\H} \hookrightarrow \G$ each of which defines an isomorphism
of $\T_{\H}$ onto some maximal torus  $\T'$
of $\G$ (any such $\iota$ is an admissible embedding as named
in \cite[\Sec1.3]{LS87}),
and all of which satisfy that $\chi \circ d\iota = \nu \circ \chi_{\H}$.

Recall that functions $\phi \in C_c^{\infty}(\g(F))$ and $\phi_{\H}
\in C_c^{\infty}(\h(F))$ are said to have matching orbital integrals
if and only if for all $\G$-regular semisimple $X_{\H} \in \h(F)$
(i.e., $\nu \circ \chi_{\H}(X_{\H}) = \chi(X')$ for some regular
semisimple $X' \in \g(\Fbar)$), we have an equality
\begin{equation} \label{matching orbital integrals}
\dsum_{X_{\H}'}
I \left(X_{\H}', \phi_{\H}, \bigl| \omega_{\H}/\omega_{\T_{X_{\H}'}} \bigr| \right)
= \dsum_{X} \Delta_0'(X_{\H}, X)
\,
I
\negthinspace
\left(X, \phi, \left| \omega_{\G}/\omega_{\T_X} \right| \right),
\end{equation}
where we have written $\T_Z$ for the centralizer of $Z$ in the appropriate
group, $I$ stands for \emph{normalized} orbital integral, $\Delta_0'$ denotes
the transfer factor that excludes the term $\Delta_{\mathrm{IV}}$
(which is accounted for by the normalization of orbital integrals),
$X_{\H}'$
runs over a set of representatives for the $\H(F)$-conjugacy classes in
the stable conjugacy class of $X_{\H}$ (this stable conjugacy class equals
$\chi_{\H}|_{\H(F)}^{-1}(\chi_{\H}(X_{\H}))$),
and $X$ runs over a set of representatives
for the $\G(F)$-conjugacy classes in
$\chi|_{\G(F)}^{-1} \circ \nu \circ \chi_{\H}(X_{\H})$.
Here the transfer factors need to be normalized, and we do so following
\cite{Kot99}, namely, normalizing them according to the $F$-conjugacy
class of the pinning $(\B^-, \T, \{X_{- \alpha}\})$,
where the vectors $X_{- \alpha}$ are
normalized to satisfy
$[X_{\alpha}, X_{- \alpha}] = H_{\alpha} = d\alpha^{\vee}(1)$.

\begin{remark} \label{on transfer factor normalization}
Let us state the consequence of this normalization of transfer factors that
concerns us.
Kottwitz \cite[p.\ 128]{Kot99}
associates a regular
nilpotent element to the pinning $(\B^-, \T, \{X_{- \alpha}\})$.
For us, this element equals $Y$.
Corollary \ref{extending from tamely ramified}
gave us a Kostant section $Y + L_F$.
Then \cite[Theorem 5.1]{Kot99} says that $\Delta_0'(X_{\H}, X)
= 1$ whenever $X_{\H} \in \h(F)$ and $X \in Y + L_F \subset \g(F)$ are
regular semisimple elements that match in the sense that
$\nu \circ \chi_{\H}(X_{\H}) = \chi(X)$.
By Lemma \ref{AD04 for unramified}(i),
the previous sentence holds true with $Y + L_F$ replaced by
$Y + \g_{x, 0+}$.
\end{remark}

\subsection{Some consequences of a Kazhdan--Varshavsky quasi-logarithm}
\label{KV consequences}
To work with orbital integrals, we will need to
relate the measure of $\T_X(F) \cap \G_{x, r}$ to that
of $\t_X(F) \cap \g_{x, r}$. For this purpose alone, we will use
a Kazhdan--Varshavsky quasi-logarithm.

\begin{remark} \label{Exponential map}
Identify $\mcB(\G)$ with the Bruhat--Tits building
of $\G_{\scn}$ as well. Then
\begin{enumerate}[(i)]
\item
Since $\G$ satisfies Hypothesis \ref{hyp:Sec3}(\ref{item:isogeny}),
it follows from \cite[Lemma 8.12]{BKV16} that the
obvious isogeny $\Z^0 \times \G_{\scn} \longrightarrow \G$ induces
an isomorphism
\[
\Z^0_{0+} \times \G_{\scn, x, 0+}
\overset{\sim}{\longrightarrow}
\G_{x, 0+},
\]
of $p$-adic analytic groups,
where $\Z^0_{0+}$ stands for $\Z^0_{z, 0+}$,
$z$ denoting the
unique point in the reduced Bruhat--Tits building of $\Z^0$.
\item
By Lemmas C.3
and C.4 of \cite{BKV16}, we have an analytic isomorphism
$\mcL : \G_{\scn, x, 0+} \longrightarrow \g_{\scn, x, 0+}$ that is
equivariant under $\G_{\scn, x, 0}$-conjugation, and such
that for all $r > 0$, $\mcL(\G_{\scn, x, r}) = \g_{\scn, x, r}$.
\end{enumerate}
\end{remark}

The consequences of Remark \ref{Exponential map} that we wish to use
are collected in the following corollary:
\begin{cor} \label{exponential map consequences}
Let $X \in \g(F)$ be regular semisimple with
centralizer $\T_{\scn, X} \subset \G_{\scn}$.
\begin{enumerate}[(i)]
\item $\mcL$ takes
$\T_{\scn, X}(F) \cap \G_{\scn, x, r}$ homeomorphically
onto $\t_{\scn, X}(F) \cap \g_{\scn, x, r}$, for all $r > 0$.

\item Suppose that
$\g_{\scn}(F)$ and $\G_{\scn}(F)$ are given compatible
measures, and that so are
$\t_{\scn, X}(F)$ and $\T_{\scn, X}(F)$. Then, for $r > 0$,
$\meas(\g_{\scn, x, r}) = \meas(\G_{\scn, x, r})$, and
$\meas(\t_{\scn, X}(F) \cap \g_{\scn, x, r}) =
\meas(\T_{\scn, X}(F) \cap \G_{\scn, x, r})$.
\end{enumerate}
\end{cor}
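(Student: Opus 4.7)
The plan is to deduce both parts directly from the existence of the Kazhdan--Varshavsky quasi-logarithm $\mcL$ recalled in Remark \ref{Exponential map}(ii), using that such $\mcL$ is in fact $\G_{\scn}(F)$-equivariant (a feature stronger than, but consistent with, the $\G_{\scn, x, 0}$-equivariance stated there).

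For part (i), I would fix $t \in \T_{\scn, X}(F) \cap \G_{\scn, x, r}$. Since $\T_{\scn, X}$ is commutative, every $s \in \T_{\scn, X}(F)$ centralizes $t$, and the equivariance of $\mcL$ forces $\Ad(s)\mcL(t) = \mcL(t)$ for every such $s$. Because $X$ is regular semisimple, $\T_{\scn, X}$ is a maximal torus of $\G_{\scn}$, and $\T_{\scn, X}(F)$ is Zariski dense in $\T_{\scn, X}$ (since $F$ has characteristic zero); therefore the subspace of $\g_{\scn}(F)$ fixed by $\Ad(\T_{\scn, X}(F))$ coincides with $\t_{\scn, X}(F)$. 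Combined with $\mcL(\G_{\scn, x, r}) = \g_{\scn, x, r}$ from Remark \ref{Exponential map}(ii), this gives $\mcL(\T_{\scn, X}(F) \cap \G_{\scn, x, r}) \subseteq \t_{\scn, X}(F) \cap \g_{\scn, x, r}$. The same argument applied to $\mcL^{-1}$ (which is $\G_{\scn}(F)$-equivariant by inversion of equivariance, and sends $\g_{\scn, x, r}$ onto $\G_{\scn, x, r}$) yields the reverse inclusion. Since $\mcL$ is an analytic isomorphism, the restriction is automatically a homeomorphism.

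For part (ii), the strategy is to show that $\mcL$ preserves Haar measure on each $\G_{\scn, x, r}$ with $r > 0$. The quasi-logarithm has differential equal to the identity of $\g_{\scn}(F)$ at $1$, and, more generally, because $\mcL$ sends each $\G_{\scn, x, s}$ onto $\g_{\scn, x, s}$ for all $s \geq r$, its Jacobian at any point of $\G_{\scn, x, r}$ agrees with the identity modulo an endomorphism strictly raising Moy--Prasad depth; hence its determinant has $p$-adic absolute value $1$. Under the stipulated compatible normalizations, this yields $\meas(\G_{\scn, x, r}) = \meas(\g_{\scn, x, r})$. For the second equality, part (i) identifies $\T_{\scn, X}(F) \cap \G_{\scn, x, r}$ with $\t_{\scn, X}(F) \cap \g_{\scn, x, r}$ under $\mcL$, and the restriction of $\mcL$ to $\T_{\scn, X}(F) \cap \G_{\scn, x, 0+}$ serves as a quasi-logarithm for the torus $\T_{\scn, X}$, with differential at $1$ the identity of $\t_{\scn, X}(F)$; the same Jacobian computation inside $\T_{\scn, X}$ delivers the claim.

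I expect the main obstacle to be the Jacobian verification used in part (ii): concluding that the determinant of the derivative of $\mcL$ is a $p$-adic unit at every point of $\G_{\scn, x, r}$ for $r > 0$ is the one place where the detailed structure of the Kazhdan--Varshavsky map, and not merely its existence and equivariance, enters. Once this is in place, both equalities follow formally from part (i) and the change-of-variables formula.
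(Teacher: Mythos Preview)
Your outline is on target, and you correctly identify where the real work lies. Two comments, one on each part.

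\textbf{Part (i).} Your argument via Zariski density of $\T_{\scn,X}(F)$ and full $\G_{\scn}(F)$-equivariance is valid (the Kazhdan--Varshavsky quasi-logarithm is indeed a $\G$-equivariant morphism of schemes, not merely $\G_{\scn,x,0}$-equivariant), but the paper takes a more economical route that avoids invoking anything beyond what Remark~\ref{Exponential map}(ii) states. It picks a single strongly regular element $t\in\T_{\scn,X}(F)$ of the form $\exp(aX)$ with $|a|$ small, so that $t\in\G_{\scn,x,0+}$; then $\T_{\scn,X}=\ker(\Int t)$ and $\t_{\scn,X}=\ker(\Ad t)$, and equivariance under this single $t$ (which lies in $\G_{\scn,x,0}$) gives the claim directly. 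Your version works too, but requires you to justify the stronger equivariance.

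\textbf{Part (ii).} Here your heuristic --- that preservation of every $\G_{\scn,x,s}$ forces the Jacobian at points of positive depth to be the identity modulo something of strictly higher depth --- is not an argument, and in fact is not obviously true from filtration-preservation alone. (One can cook up analytic self-maps of $\varpi\mO$ preserving all $\varpi^n\mO$ whose derivative at a point of depth~$1$ is congruent to a nontrivial unit, not to~$1$, modulo~$\varpi$.) What is actually needed is exactly the ``detailed structure'' you flag: the paper uses that $\mcL$ is \emph{defined over $\mO$} as a morphism of schemes $\G_{\mO_{E_X}}\to\g_{\mO_{E_X}}$ (after base change to a splitting field $E_X$ of $\T_X$). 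This lets one reduce modulo the maximal ideal; any $g\in\G_{\scn,x,r}$ with $r>0$ reduces to the identity, and since $d\mcL_1=\mathrm{id}$ by the definition of a quasi-logarithm, the reduction of the Jacobian determinant is~$1$, hence the Jacobian determinant is a unit. The same integrality, combined with your part~(i), shows (via \cite[Proposition 1.7.6]{BT2}) that $\mcL$ restricts to an $\mO_{E_X}$-morphism $\T_{X,\mO_{E_X}}\to\t_{X,\mO_{E_X}}$, so the identical reduction argument handles the torus measure. So your plan is right, but the missing ingredient is precisely the integrality of $\mcL$, not a formal filtration argument.
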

\begin{proof}
It is enough to see statements (i) and (ii) above when $\G$ is simply connected.
For all $r > 0$, by Remark \ref{Exponential map}(ii), we have homeomorphisms $\G_{x, r} \rightarrow
\g_{x, r}$ and $\G_{x,0+} \rightarrow \g_{x,0+}$.
Now
(i) follows from the conjugation equivariance of $\mcL$: picking any
$t \in \T_X(F)$ that is strongly regular in $\G$ (e.g., $\exp(a X)$ with
$a \in F$, $|a|$ small enough), we see that any given $g \in \G_{x,0+}$
belongs to the kernel $\T_X$ of $\Int t$ if and only if $\mcL(g) \in
\g_{x,0+}$ belongs to the kernel $\t_X$ of $\Ad t$.

To see that (ii) follows too, it is enough to show that the top
exterior power of the derivative of $\mcL$ (resp., $\mcL|_{\T_X(F)}$)
at each $g \in \G_{x, r}$ (resp., at each $t \in \T_X(F) \cap \G_{x, r}$),
which is an endomorphism of a one-dimensional $F$-vector space,
is a unit in $\mO$, or equivalently, $\mO_{\Fbar}$. Let $E_X \subset \Fbar$
be a field extension over which $\T_X$ splits. Now $\G_{E_X}$
gets an integral model $\G_{\mO_{E_X}}$ from $x$,
while $\T_{X, E_X}$
has a canonical integral model since it is split.
We claim that the base changes
of $\mcL$ and $\mcL|_{\T_X}$ to $E_X$ extend to $\mO_{E_X}$-morphisms $\G_{\mO_{E_X}}
\rightarrow \g_{\mO_{E_X}}$ and $\T_{X, \mO_{E_X}} \rightarrow
\t_{X, \mO_{E_X}}$. Indeed, the assertion involving $\G_{\mO_{E_X}}$
follows from $\mcL$ being `defined over $\mO$'
in the sense of \cite[Appendix C.2]{BKV16}
(see the last sentence of \cite[C.2(a)]{BKV16},
\cite[Definition 1.88(b) and Notation 1.8.6(b)]{KV06}). This
together with the conjugation equivariance of $\mcL$ implies,
as in (i), that $\mcL$ takes $\T_{\mO_{E_X}}(\mO_{\Fbar})$ to
$\t_{\mO_{E_X}}(\mO_{\Fbar})$, from which the assertion involving
$\T_{X, \mO_{E_X}}$ follows by \cite[Proposition 1.7.6]{BT2}.

This already implies that the top
exterior power of the derivative of $\mcL$ (resp., $\mcL|_{\T_X(F)}$)
at each $g \in \G_{x, r}$ (resp., at each $t \in \T_X(F) \cap \G_{x, r}$)
belongs to $\mO_{\Fbar}$, and it suffices to show that the image
of this element in $\bar \kappa$ equals $1$. But this image may be
computed by base-changing 
to $\kappa_{E_X}$. However, $g$ (resp., $t$) has the identity
for its image in $\G_{\mO_{E_X}}(\kappa_{E_X})$ (resp.,
$\T_{\mO_{E_X}}(\kappa_{E_X})$), so that the result follows from
the derivative of $\mcL$ at the identity element being identity,
by the very definition of a quasi-logarithm (see
\cite[C.1(a)]{BKV16}).
\end{proof}

\subsection{An orbital integral computation} \label{orbital integral computation}

\begin{lm} \label{orbital integral}
Suppose $X \in \g(F)$ is regular semisimple. Then
\begin{equation} \label{orbital integral equation}
I(X, \charfn_{Y + \g_{x, 0+}}, |\omega_X|)
=
\begin{cases}
c_{\G} & \text{if $X \in \Ad \G(F) (Y + \g_{x, 0+})$, and} \\
0 & \text{otherwise},
\end{cases}
\end{equation}
where

\begin{eqnarray*}
c_{\G}
&=& \mm(\t(\mO_E), |\omega_{\T}|)^{-1} \mm(\g(E)_{x, 0}, |\omega_{\G}|)
q^{-(\dim \G - \rk \G + m)/2},
\end{eqnarray*}
where $m$ is the rank of the endomorphism of the $\kappa$-vector
space $\g_{x, 0}/\g_{x, 0+}$ induced by $\ad Y$.
\end{lm}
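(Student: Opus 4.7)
The plan is to reduce the computation to Lemma~\ref{selfdual} by transferring the measure $|\omega_X|$ to the Lie algebra via a quasi-logarithm and then converting it into the symplectic measure $|\omega_X'|$ using Lemma~\ref{omegaX omegaX'}.

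If $X$ is not $\G(F)$-conjugate to an element of $Y+\g_{x,0+}$, then the integrand $\charfn_{Y+\g_{x,0+}}(\Ad g^{-1}(X))$ vanishes identically and $I=0$. Otherwise, by $\G(F)$-equivariance of orbital integrals, we may assume $X\in Y+\g_{x,0+}$. Proposition~\ref{AD04 main result}(ii) states that $\Ad\G(\Fbar)(X)\cap (Y+\g_{x,0+})$ is a single $\G_{x,0+}$-orbit, and since this set lies in $\g(F)$ it is also a single $\G_{x,0+}$-orbit inside $\g(F)$. Hence the support of $g\mapsto \charfn_{Y+\g_{x,0+}}(\Ad g^{-1}(X))$ in $\G(F)/\T_X(F)$ is $\G_{x,0+}\T_X(F)/\T_X(F)\cong \G_{x,0+}/(\G_{x,0+}\cap \T_X(F))$, on which the integrand equals $1$. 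Therefore, taking into account the factor $D_\g(X)^{1/2}$ built into the normalized orbital integral,
\[
I(X,\charfn_{Y+\g_{x,0+}},|\omega_X|)
= D_\g(X)^{1/2}\cdot \meas\bigl(\G_{x,0+}/(\G_{x,0+}\cap \T_X(F)),\,|\omega_X|\bigr).
\]

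By Remark~\ref{Exponential map}(i), available thanks to Hypothesis~\ref{hyp:Sec3}(\ref{item:isogeny}), together with Corollary~\ref{exponential map consequences}, the quasi-logarithm identifies the measure above with the measure on $\g_{x,0+}/(\g_{x,0+}\cap \t_X(F))$ induced by the tangent-space value of $|\omega_X|$ at the identity coset. Applying Lemma~\ref{omegaX omegaX'} at the tangent space, this equals
\[
D_\g(X)^{-1/2}\cdot \frac{\mm(\g(E)_{x,0},|\omega_\G|)}{\mm(\t(\mO_E),|\omega_\T|)}\cdot \meas(L,|\omega_X'|),
\]
where $L := \g_{x,0+}/(\g_{x,0+}\cap \t_X(F))$. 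By Lemma~\ref{selfdual}, inside the symplectic space $(\g(F)/\t_X(F),\langle\cdot,\cdot\rangle_X)$ the lattice $L$ is contained in its symplectic dual $L^\perp = \g_{x,-1}/(\g_{x,-1}\cap \t_X(F))$, so $\meas(L,|\omega_X'|) = [L^\perp:L]^{-1/2}$. Using $\g_{x,-1}=\varpi^{-1}\g_{x,0}$ and the analogous identity for $\t_X(F)$,
\[
[L^\perp:L] = \frac{[\g_{x,-1}:\g_{x,0+}]}{[\g_{x,-1}\cap \t_X(F):\g_{x,0+}\cap \t_X(F)]} = q^{\dim\G - \rk\G + (a-b)},
\]
where $a := \dim_\kappa \g_{x,0}/\g_{x,0+}$ and $b := \dim_\kappa (\g_{x,0}\cap \t_X(F))/(\g_{x,0+}\cap \t_X(F))$.

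The main obstacle is to show $a-b=m$. The natural map
\[
(\g_{x,0}\cap \t_X(F))/(\g_{x,0+}\cap \t_X(F)) \longrightarrow \ker(\overline{\ad Y}) \subset \g_{x,0}/\g_{x,0+}
\]
is well defined and injective since $\overline{\ad X} = \overline{\ad Y}$ modulo $\g_{x,0+}$. For surjectivity, lift $\bar Z \in \ker(\overline{\ad Y})$ to $Z\in \g_{x,0}$; then $[X,Z]\in [X,\g(F)]\cap \g_{x,0+} = [X,\g_{x,0+}]$ by Equation~\eqref{Xgx0p} from the proof of Lemma~\ref{selfdual}, so $[X,Z]=[X,Z_1]$ for some $Z_1\in \g_{x,0+}$; then $Z-Z_1\in \ker(\ad X)\cap \g_{x,0} = \t_X(F)\cap \g_{x,0}$ and lifts $\bar Z$. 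Rank-nullity applied to $\overline{\ad Y}$ then yields $b = a-m$. Assembling all the steps, the factors $D_\g(X)^{\pm 1/2}$ cancel and we obtain $I = c_\G$ as claimed.
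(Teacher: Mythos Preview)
Your proof is correct and follows essentially the same route as the paper's: reduce to $X\in Y+\g_{x,0+}$ via Proposition~\ref{AD04 main result}(ii), convert $|\omega_X|$ to $|\omega_X'|$ via Lemma~\ref{omegaX omegaX'}, pass to the Lie algebra using the quasi-logarithm (Remark~\ref{Exponential map} and Corollary~\ref{exponential map consequences}), invoke Lemma~\ref{selfdual} for the index computation, and finish by showing $a-b=m$ via Equation~\eqref{Xgx0p} and the observation $\overline{\ad X}=\overline{\ad Y}$. The only cosmetic differences are that the paper applies Lemma~\ref{omegaX omegaX'} before the quasi-logarithm rather than after, and makes the passage to $\G_{\scn}$ explicit (whereas you implicitly use that $\g/\t_X\cong\g_{\scn}/\t_{\scn,X}$ since $\z\subset\t_X$); your surjectivity argument for the identification $(\g_{x,0}\cap\t_X(F))/(\g_{x,0+}\cap\t_X(F))\cong\ker(\overline{\ad Y})$ is slightly more detailed than the paper's one-line appeal to~\eqref{Xgx0p}.
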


\begin{proof}
We may and do assume $X \in Y + \g_{x, 0+}$.
By Lemma \ref{omegaX omegaX'} the left-hand side is 
given by the product of
$\mm(\t(\mO_E), |\omega_{\T}|)^{-1} \mm(\g(E)_{x, 0}, |\omega_{\G}|)$
and the unnormalized orbital integral
$O(X, \charfn_{Y + \g_{x, 0+}}, |\omega_X'|)$.
Hence by Proposition \ref{AD04 main result},
the left-hand side equals
\[
\mm(\t(\mO_E), |\omega_{\T}|)^{-1} \mm(\g(E)_{x, 0}, |\omega_{\G}|)
\cdot
\meas(\G_{x, 0+}/(\G_{x, 0+} \cap \T_X(F)), |\omega_X'|).
\]
Note that thanks to Hypothesis \ref{hyp:Sec3}(\ref{item:isogeny}),
the projection $X_{\scn}$ of $X$ to $\g_{\scn}(F) \subset \g(F)$
belongs to $Y + \g_{\scn, x, 0+}$, and
\begin{multline*}
\meas(\G_{x, 0+}/(\G_{x, 0+} \cap \T_X(F)), |\omega_X'|)
= \meas(\g_{\scn, x, 0+}/(\g_{\scn, x, 0+} \cap \t_{X_{\scn}}(F)), |\omega_{X_{\scn}}'|) \\
= [\g_{\scn, x, -1}/(\g_{\scn, x, -1} \cap \t_X(F)) : \g_{\scn, x, 0+}/(\g_{\scn, x, 0+} \cap \t_X(F))]^{-1/2}
\end{multline*}
(use Remark \ref{Exponential map}(i),
Corollary \ref{exponential map consequences}(ii)
and Lemma \ref{selfdual}).
Since the index of $\g_{\scn, x, 0}/(\g_{\scn, x, 0} \cap \t_X(F))$
in $\g_{\scn, x, -1}/(\g_{\scn, x, -1} \cap \t_X(F))$ 
equals $q^{\dim \G - \rk \G}$,
it now suffices to show that
\begin{equation} \label{relating to Y}
\dim_{\kappa} \g_{x, 0}/ ((\g_{x, 0} \cap \t_X(F)) + \g_{x, 0+})
= \dim_{\kappa} ([Y, \g_{x, 0}] + \g_{x, 0+})/\g_{x, 0+}.
\end{equation}
Thanks to Equation \eqref{Xgx0p},
the left-hand side of the above equation is
the rank of the $\kappa$-linear endomorphism of
$\g_{x, 0}/\g_{x, 0+}$ induced by $\ad\, X$.
Now the lemma follows
from the fact that $\ad\ X$ and $\ad\ Y$
induce the same map on $\g_{x, 0}/\g_{x, 0+}$, since
$X \in Y + \g_{x, 0+}$.
\end{proof}

\begin{remark}
Note that if $p$ is large enough to satisfy the hypotheses
of \cite[\Sec4.2]{Deb02a}
(or equivalently, those of \cite[\Sec 2.2]{Deb02b})
one might also be able to prove Lemma \ref{orbital integral} as follows.
Under this assumption, 
the main results
of \cite{AD04} and 
\cite[Theorem 2.1.5]{Deb02b}
imply that every $X \in \g(F)_{\tn}$
lies in the range of validity of the Shalika germ expansion for
the function $\charfn_{Y + \g_{x, 0+}}$ near $0 \in \g(F)$.
This in turn implies 
Lemma \ref{orbital integral}
up to some constant independent of $X$.
It should be possible
to calculate this constant using the main result of \cite{She89}
and comparing two different measures on $\Ad \G(F) \cdot Y$:
the one considered by \cite{She89} and the one considered just before
\cite[Lemma 3.4.4]{Deb02b}.
\end{remark}

\subsection{The main result} \label{main result 2}
Recall we have assumed 
that $\G$ and $\H$ satisfy
Hypothesis \ref{hyp:Sec3}.
\begin{pro} \label{x k transfer}
Let $\phi_{\H} := \charfn_{Y_{\H} + \h_{x_{\H}, 0+}}$
and $\phi := \charfn_{Y + \g_{x, 0+}}$. Let $c_{\G}$
be as in Lemma \ref{orbital integral}, and let
$c_{\H}$ be the analogous constant for $\H$.
Then $c_{\H}^{-1} \phi_{\H}$ and
$c_{\G}^{-1} \phi$ have matching orbital integrals.
\end{pro}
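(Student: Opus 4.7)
The plan is to reduce the verification of \eqref{matching orbital integrals} for $c_\G^{-1}\phi$ and $c_\H^{-1}\phi_\H$ to counting conjugacy classes in a stable class that meet the relevant Kostant-type neighborhoods. By Lemma \ref{orbital integral} applied to both $\G$ and $\H$ (note that $\H$ also satisfies Hypothesis \ref{hyp:Sec3}), the normalized orbital integral $I(Z, c_\G^{-1}\phi, |\omega_Z|)$ equals $1$ if $Z \in \Ad\G(F)(Y + \g_{x,0+})$ and $0$ otherwise, and likewise on $\H$. Hence, for a $\G$-regular semisimple $X_\H \in \h(F)$, the LHS of \eqref{matching orbital integrals} equals the number of $\H(F)$-conjugacy classes in the stable class of $X_\H$ that meet $Y_\H + \h_{x_\H,0+}$, while the RHS equals $\sum_X \Delta_0'(X_\H, X)$ over the $\G(F)$-conjugacy classes $\{X\}$ in the matching stable class $\chi|_{\g(F)}^{-1}(\nu\circ\chi_\H(X_\H))$ that meet $Y + \g_{x,0+}$.

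The first case I would dispatch is when $X_\H$ is not topologically nilpotent. Then every element of the stable class of $X_\H$ fails to be topologically nilpotent, so the LHS vanishes, because $Y_\H + \h_{x_\H,0+}$ is contained in $\h(F)_{\tn}$ by Lemma \ref{unramified tn} together with Remark \ref{Recall from Ad98}(i). For the RHS, since matching preserves topological nilpotence in both directions --- as one sees by choosing an admissible embedding $\iota: \T_{X_\H}\hookrightarrow\G$ identifying character lattices, under which $|d\mu'(X_s)| = |d(\iota^*\mu')(X_{\H,s})|$ for all $\mu'\in \X^*(\iota(\T_{X_\H}))$ --- no $X$ in the matching stable class is topologically nilpotent, so no representative lies in $\Ad\G(F)(Y + \g_{x,0+})$ and the RHS also vanishes.

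In the remaining case, $X_\H$ is topologically nilpotent and $\G$-regular semisimple. Applying Proposition \ref{AD04 main result}(ii) to $\H$, the intersection of the stable class of $X_\H$ with $Y_\H + \h_{x_\H,0+}$ is a single $\H_{x_\H,0+}$-orbit, hence a single $\H(F)$-conjugacy class; thus exactly one summand on the LHS contributes, giving LHS $= 1$. Applying Proposition \ref{AD04 main result}(ii) to $\G$ yields a unique $\G(F)$-conjugacy class on the RHS meeting $Y + \g_{x,0+}$; fix a representative $X \in Y + \g_{x,0+}$. By Remark \ref{on transfer factor normalization}, i.e., \cite[Theorem 5.1]{Kot99} applied to the pinning determining our transfer-factor normalization, $\Delta_0'(X_\H, X) = 1$, so the unique non-zero summand on the RHS also equals $1$, matching the LHS.

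The main obstacle is the assertion that matching preserves topological nilpotence in both directions. The cleanest argument is via an admissible embedding $\iota : \T_{X_\H}\hookrightarrow \G$, under which $\iota(X_\H)$ is stably conjugate to $X$ and hence shares all eigenvalue absolute values; combined with the pullback $\iota^*: \X^*(\iota(\T_{X_\H}))\to\X^*(\T_{X_\H})$, Definition \ref{top nilp} transfers symmetrically. Equivalently, via Lemma \ref{unramified tn}, this amounts to checking that $\nu$ both sends $\q_\H(1)$ into $\q(1)$ and reflects it, which follows from the integral compatibility of the Chevalley isomorphisms (both $\G$ and $\H$ being split over $E$) together with the equality $\nu(\chi_\H(0)) = \chi(0)$.
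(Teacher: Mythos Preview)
Your proof is correct and follows essentially the same route as the paper's: reduce both sides via Lemma \ref{orbital integral}, split into cases according to whether $X_{\H}$ is topologically nilpotent, use the admissible embedding $\iota$ to see that topological nilpotence is preserved under matching, and in the topologically nilpotent case invoke Proposition \ref{AD04 main result}(ii) for uniqueness on each side together with Remark \ref{on transfer factor normalization} for $\Delta_0'(X_{\H},X)=1$. The only point the paper makes slightly more explicit is that the sum on the $\G$-side is nonempty (citing Kottwitz and quasi-splitness), though this is also immediate from the Kostant section $Y+L_F$ landing in $\g(F)$.
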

\begin{proof}
Let $X_{\H} \in \h(F)$ be $\G$-regular semisimple.
We need to prove an equality analogous to that in
Equation \eqref{matching orbital integrals}, which we may write as:
\begin{equation} \label{new matching orbital integrals}
\dsum_{X_{\H}'}
I
\left(
	X_{\H}',
	c_{\H}^{-1} \phi_{\H},
	\bigl|
		\omega_{\H}/\omega_{\T_{X_{\H}'}}
	\bigr|
\right)
=
\dsum_{X} \Delta_0'(X_{\H}, X)
\,
I
\negthinspace
\left(
	X,
	c_{\G}^{-1} \phi,
	\left|
		\omega_{\G}/\omega_{\T_X}
	\right|
\right).
\end{equation}
Since $\G$ is quasi-split, it is known that the sum on
the right-hand side of Equation \eqref{new matching orbital integrals}
is nonempty
(e.g., this is
an easy consequence of either of
\cite[Corollary 2.2]{Ko82} or \cite[Theorem 4.1]{Ko82}).
It is also easy to see
(see, e.g., \cite[\Sec 1.1.7]{KV12}) that for each $X$ contributing
to the right-hand side of that equation, one
can choose an isomorphism $\iota : \T_{\H, X_{\H}} \longrightarrow
\T_X$ of the centralizers of $X_{\H}$ and $X$ with $d \iota(X_{\H}) = X$.
By Definition \ref{top nilp}, we conclude
\begin{equation} \label{topological nilpotence and matching}
\text{$X_{\H} \in \h(F)_{\tn}$ if and only if $X \in \g(F)_{\tn}$.}
\end{equation}

If $X_{\H} \not\in \h(F)_{\tn}$, then by
(\ref{topological nilpotence and matching}), $X \not\in \g(F)_{\tn}$
for any $X$ that contributes to the right-hand side of Equation
\eqref{new matching orbital integrals}.
Recalling that $\Ad \G(F)(Y + \g_{x, 0+}) \subset \g(F)_{\tn}$
by Proposition \ref{AD04 main result}(i), and
similarly for $\H$,
both sides of Equation \eqref{new matching orbital integrals} vanish when
$X_{\H} \not\in \h(F)_{\tn}$.

Thus, now assume $X_{\H} \in \h(F)_{\tn}$, so that, by
(\ref{topological nilpotence and matching}), any
$X$ that contributes to the right-hand side of Equation
\eqref{new matching orbital integrals} belongs to $\g(F)_{\tn}$.
By Proposition \ref{AD04 main result}(ii) applied to $\H$, exactly one
$X_{\H}'$ contributes to the left-hand side of
Equation \eqref{new matching orbital integrals}, say $X_{\H}$, which
may be assumed to lie in $Y_{\H} + \h_{x_{\H}, 0+}$. By the
same result but applied to $\G$, exactly one
$X$ contributes to the right-hand side of
Equation \eqref{new matching orbital integrals},
and such an $X$ may be assumed to lie
in $Y + \g_{x, 0+}$. Thus, we now need to
show that
\[
I
\left(
	X_{\H},
	c_{\H}^{-1} \phi_{\H},
	\bigl|
		\omega_{\H}/\omega_{\T_{X_{\H}}}
	\bigr|
\right)
=
\Delta_0'(X_{\H}, X)
\,
I
\negthinspace
\left(
	X,
	c_{\G}^{-1} \phi,
	\left| \omega/\omega_{\T_X} \right|
\right).
\]
By Remark \ref{on transfer factor normalization},
$\Delta_0'(X_{\H}, X) = 1$. Now the result follows
from Lemma \ref{orbital integral}.
\end{proof}

\subsection{Some more pairs of functions with matching orbital integrals}
\label{more pairs of matching functions}
We expect that one
can use the above result to produce more pairs of functions
with matching orbital integrals. Let us sketch how this may
be done.
\begin{enumerate}[(a)]
\item
A `scaling' argument as in
\cite[Prop.\ 3.2.2]{Fer07}
(similar to an argument in \cite[\Sec9]{Sha90})
shows that for all $l \in \ZZ$,
up to a scalar (depending on $l$)
the characteristic functions of $\varpi^{-l} Y + \g_{x, (-l)+}$ and
$\varpi^{-l} Y_{\H} + \h_{x_{\H}, (-l)+}$ have matching orbital integrals.
\item Let $l \in \NN$. Recall the bilinear
form $\langle \cdot, \cdot \rangle$ on $\fg$.
It is explained in \cite[\Sec VIII.6]{Wal95} how to obtain from
$\langle \cdot, \cdot \rangle$
a symmetric nondegenerate $\Ad \H$-invariant
bilinear form $\langle \cdot, \cdot \rangle_{\H}$ on $\h$.
Using $\langle \cdot, \cdot \rangle$,
$\langle \cdot, \cdot \rangle_{\H}$
and an additive character $\Lambda$ of $F$ with conductor $\varpi \mO_F$ to
define Fourier transforms on $\g(F)$ and $\h(F)$, we get
two functions $\varphi_l \in C_c^{\infty}(\g(F))$ and $\varphi_{\H, l} \in
C_c^{\infty}(\h(F))$ that have matching orbital integrals, up to an
explicit scalar (use Conjecture 1 of \cite{Wal95}, which has since been
proved thanks to Waldspurger and Ng\^{o},
see \cite[Theorem 4.1.3]{KV12}). Here $\varphi_l$ is supported on $\g_{x, l}$,
on which it
is a scalar multiple of
$X \mapsto \Lambda(\langle \varpi^{-l} Y, X \rangle)$, and
$\varphi_{\H, l}$ has a similar description.
\item
If we have suitable quasi-logarithms on $\H$ and $\G$
that are compatible with each other, and behave well with respect to
transfer factors, one should be able to pull the functions in (b) above to the
group level. Thus, if suitable hypotheses on the quasi-logarithm maps
are satisfied (basically, statements along the lines of
Proposition 5.1.3 and Proposition 5.2.5(b) and (c) of \cite{KV12}),	       
we should get that for
$l \in \NN$, $\tilde \varphi_l \in C_c^{\infty}(\G(F))$ and
$\tilde \varphi_{\H, l} \in C_c^{\infty}(\H(F))$
have matching orbital integrals, up to a scalar.
Here $\tilde \varphi_l$ is supported on $\G_{x, l}$, on which it is the
inflation of the character
of $\G_{x, l}/\G_{x, l+}$ obtained
by composing the isomorphism $\G_{x, l}/\G_{x, l+} \cong
\g_{x, l}/\g_{x, l+}$ of groups with (a scalar multiple of)
$X \mapsto \Lambda ( \langle \varpi^{-l} Y, X \rangle)$.

\end{enumerate}


\def\cprime{$'$} \def\cprime{$'$} \def\Dbar{\leavevmode\lower.6ex\hbox to
  0pt{\hskip-.23ex \accent"16\hss}D}

\end{document}